\documentclass[jap,press]{apt}
\newif\ifjournal
\journalfalse
\ifjournal\usepackage{lineno}\linenumbers\fi
\usepackage[colorlinks=true,linkcolor=blue,urlcolor=blue,citecolor=blue,anchorcolor=blue]{hyperref}

\authornames{P. Chaudhary and K. Khandelwal}
\shorttitle{Sums of two dependent uniform random variables}

\usepackage{mathtools}
\usepackage{cite}
\usepackage{enumitem}
\numberwithin{equation}{section}

\renewcommand{\crnotice}{}
\renewcommand{\ams}[2]{\begin{center}\begin{minipage}{315.83pt}\small
  \noindent 2020 Mathematics Subject Classification:~Primary {#1}\\
  \phantom{2020 Mathematics Subject Classification:~}Secondary {#2}
  \end{minipage}\end{center}\par\normalsize}

\usepackage[capitalize,nameinlink]{cleveref}
\crefname{thm}{Theorem}{Theorems}
\crefname{prop}{Proposition}{Propositions}
\crefname{lem}{Lemma}{Lemmas}
\crefname{cor}{Corollary}{Corollaries}
\crefname{ex}{Example}{Examples}
\crefname{rem}{Remark}{Remarks}
\crefname{prob}{Problem}{Problems}
\crefname{defn}{Definition}{Definitions}

\newcommand{\R}{\mathbb{R}}
\newcommand{\Z}{\mathbb{Z}}
\newcommand{\N}{\mathbb{N}}
\newcommand{\cD}{\mathcal{D}}

\newcommand{\cG}{\mathcal{G}}

\newcommand{\Ph}{\widehat{P}}
\newcommand{\DU}{\cD_{2}^{U}}
\newcommand{\frc}{\operatorname{frac}}
\newcommand{\law}{\operatorname{law}}

\newcommand{\ind}[1]{\mathbf{1}\{#1\}}

\newif\ifanon
\anonfalse

\begin{document}

\title{Sums of two dependent uniform random variables:\\ a characterization by joint tail bounds}

\authoronetwo[Independent researcher]{Prashant Chaudhary}
\authorthree[University of Notre Dame]{Kapil Khandelwal}
\addressone{Santa Clara, CA 95051, USA.}
\emailtwo{prashant@prashantchaudhary.com}
\addressthree{Department of Civil and Environmental Engineering and Earth Sciences, University of Notre Dame, Notre Dame, IN 46556, USA.}

\begin{abstract}
Which laws can the sum of two uniform random variables have when nothing is known about how
they depend on each other? For three or more uniform summands, convex order gives the answer;
for two it does not, and the question has been open. We show that the attainable laws are
exactly those that satisfy a family of explicit joint bounds on sums of tail probabilities, one
bound for each finite set of thresholds. For laws with a density at most one, convex order alone
decides. For densities at most two, convex order implies the simplest of the joint bounds, and
the constant two cannot be raised for single-threshold bounds. The number of thresholds a law
needs is set by the spacing of its atoms, not by how many atoms it has.

\end{abstract}

\keywords{Dependence uncertainty; risk aggregation; joint tail bound; convex order; Kantorovich duality; doubly stochastic matrix}

\ams{60E05}{60E15; 15B51; 49N15}

\section{Introduction}\label{sec:intro}

Suppose two quantities are each known to be uniform on $[0,1]$, and nothing at all is known about how they move together. What can their sum look like? The question sounds like an exercise, but it sits at the center of risk aggregation under dependence uncertainty, where one knows the marginal laws of the positions in a portfolio and has to bound the risk of the total without a model for the dependence \cite{EmbrechtsPuccettiRuschendorf2013,BernardJiangWang2014,HauserShahverdyanEmbrechts2015}. It also shows up when two dependent p-values are averaged \cite{VovkWang2020}, and in the mixability problems of \cite{WangWang2011,WangWang2016}. The question itself came out of television audience measurement, where linear television and connected television (CTV) each report their own audience distribution and the overlap between them is not observed. The two-uniform problem is the simplest version of that question, the one with equal marginals. Write $\cD_n(F_1,\dots,F_n)$ for the set of laws of $X_1+\dots+X_n$ over all random vectors with $X_i\sim F_i$, and $\cD_n^U$ for the case of $n$ standard uniform marginals.

For three or more uniforms the answer is clean. Every attainable law is dominated in convex order by the law of $nU$, and Mao et al.\ \cite{MaoWangWang2019} proved that for $n\ge3$ nothing else is needed. Two summands turned out to be harder. Convex order is still necessary, but it is no longer sufficient \cite{MaoWang2015}, and \cite{MaoWangWang2019} settled unimodal densities, two-point laws, and equidistant three-point laws, and gave a sufficient condition for densities bounded below near $1$, noting that `a complete determination of $\cD_2$ for uniform margins is still unclear.' The classical bounds of Makarov, of R\"uschendorf, and of Frank, Nelsen, and Schweizer \cite{Makarov1981,Ruschendorf1982,FrankNelsenSchweizer1987} (see \cite{ZhangRichardson2024} for a recent account) control one tail probability at a time, so they describe the worst case at each threshold but not which laws can actually occur.

This paper gives that description. The short version is that convex order has to be replaced by bounds that look at several tail probabilities at once. For thresholds $0<\eta_1\le\dots\le\eta_r<1$, put $\eta_{r+1}=1$ and let $\xi_\ell$ be the largest sum $\eta_q+\eta_{q'}$ with $q+q'=\ell+1$. We show in \cref{thm:tails} that a law of $S$ on $[0,2]$ with mean one is attainable exactly when
\[
\sum_{\ell=1}^{2r}\mathbb{P}(S\ge\xi_\ell)\ \le\ 2\sum_{q=1}^r(1-\eta_q)
\]
for every choice of thresholds, and that distinct thresholds are enough. With a single threshold this reads $\mathbb{P}(S\ge2\eta)+\mathbb{P}(S\ge1+\eta)\le2(1-\eta)$. Take $\eta=\frac12$. On its own, $\mathbb{P}(S\ge1)$ can be as large as $1$, and $\mathbb{P}(S\ge\frac32)$ as large as $\frac12$, but the bound says that the two together never exceed $1$: the two worst cases cannot happen at once. The bound is sharp for every $\eta$. Convex order follows from the family and is strictly weaker than it (\cref{ex:twopoint,ex:worked}), and the same statement holds for two copies of any law on a compact interval (\cref{prop:generalF}).

Convex order comes back for laws with a density. If the density of $S$ is at most one, convex order alone decides whether the law is attainable (\cref{thm:density}). At density two it still implies every single-step bound, every two-step bound, and every bound whose threshold gaps do not increase (\cref{prop:singlestep}, and \cref{thm:twostep,thm:gaps}). The constant two is sharp for single steps, since above it convex order fails even with its folded version added (\cref{prop:densitysharp}). Whether density two and convex order are enough in general is the main question this paper leaves open. \Cref{prob:density} states it, and \cref{prop:duality} reduces it to densities that take only the values $0$ and $2$.

The last finding runs against intuition. A law with few atoms ought to need few thresholds to rule out, but it does not: the number of thresholds is driven by the spacing of the law, not by how many atoms it has. A mean-one two-point law with spacing $\beta$, where $1/\beta$ is not an integer, needs a staircase with $\max(1,\lfloor1/\beta\rfloor)$ unit steps and is not excluded by anything smaller (\cref{thm:twopoint}), and symmetric three-point laws behave the same way (\cref{thm:threepoint}).

Three tools carry the proofs. Kantorovich duality, in a form with one potential instead of two because the marginals are equal (\cref{thm:dual}), turns membership into a family of inequalities. A fibered reduction turns the laws of $X+Y$ on any lattice $a+b\Z$ into Minkowski combinations of polytopes of doubly stochastic X-rays (\cref{thm:A,thm:B}, and \cref{prop:C}); it also reproves the two- and three-point results of \cite{MaoWangWang2019} and corrects a few slips in them (Section~\ref{sec:windows} and Appendix~\ref{app:mww}). Of the three tools, this reduction is the one most likely to be useful elsewhere, since it holds for any pair of marginals and not only for uniforms. For laws with bounded density, convex certificates (\cref{lem:densitydual}) do the work. On the grid, a short convex-minorant argument shows that convex order decides for lattice laws whose atoms are at most $1/k$ (\cref{cor:smallatoms}), and the density result follows from that lattice statement by rounding.

On each grid $(1/k)\Z$ the shortest complete list of these tail bounds is the list of facets of the X-ray polytope, and that list grows faster than any polynomial in $k$. The facets are the subject of a companion paper \cite{ChaudharyKhandelwalFacets}, and nothing proved here depends on it.

\Cref{sec:setup} fixes notation and states the fibered reduction, and \cref{sec:grid} develops the grid picture and the duality. \Cref{sec:tails} proves the tail-sum characterization and the step counts, \cref{sec:density} treats bounded densities, and \cref{sec:windows} looks at three-point laws. Longer proofs, the four-atom laws, and a description of the exact computations that back the numerical statements are collected in the supplementary material, cited here with the prefix S.
\section{Setup and the fibered reduction}\label{sec:setup}

\subsection{Notation}
Throughout, `law' means probability distribution on $\R$, $\frc(x)=x-\lfloor x\rfloor$, and $\delta_x$ is the point mass at $x$. For laws $F,G$ with finite means, $F\le_{\mathrm{cx}}G$ (convex order) means $\int f\,dF\le\int f\,dG$ for every convex $f$; applied to random variables it refers to their laws. The stop-loss function of $F$ is $x\mapsto\int(s-x)^+\,F(ds)$, written $\pi_X(x)=\mathbb{E}(X-x)^+$ for $X\sim F$; for laws with equal means, $F\le_{\mathrm{cx}}G$ if and only if the stop-loss function of $F$ lies below that of $G$.

For laws $F,G$ let $\cD(F,G)$ be the set of laws of $X+Y$ over couplings with $X\sim F$, $Y\sim G$, and $\DU=\cD(U[0,1],U[0,1])$. The set $\cD(F,G)$ is convex and weakly compact, because the couplings of $F$ and $G$ form a convex, weakly compact set and $(x,y)\mapsto x+y$ is continuous (see also \cite[Lemma~2]{MaoWangWang2019}). For probability mass functions $p,q$ on $\Z$ let $\Sigma(p,q)$ be the set of laws of $I+J$ over couplings of $p$ and $q$; for a set $A$ of laws on $\Z$, $A+1$ is the set of their shifts by one. Let $\mathcal{U}_m$ be the uniform law on $\{0,\dots,m-1\}$ and
\[
P(m)=\Sigma(\mathcal{U}_m,\mathcal{U}_m),
\]
which by the Birkhoff--von Neumann theorem is the convex hull of the laws of $i+\sigma(i)$ over permutations $\sigma$ of $\{0,\dots,m-1\}$; $P(1)=\{\delta_0\}$. For $\lambda,\mu\ge 0$, $\lambda A+\mu B$ is the Minkowski sum of scaled sets, and a term with weight zero is dropped.

Fix $a\in\R$ and put $c=\frc(a)$ and $\varphi(r)=\frc(a-r)$ for $r\in[0,1)$. Then $\varphi$ is an everywhere-defined Borel involution of $[0,1)$ ($\varphi(r)=c-r$ on $[0,c]$ and $\varphi(r)=1+c-r$ on $(c,1)$), it preserves Lebesgue measure, and $r+\varphi(r)=c+\ind{r>c}$ for every $r$. For a random variable $X\sim F$ write $X=I+R$ with $I=\lfloor X\rfloor$ and $R=\frc(X)$, let $\nu(F)$ be the law of $R$, and let $r\mapsto p(r)$ be a regular conditional distribution of $I$ given $R=r$, that is, a probability kernel from $[0,1)$ to $\Z$ with $\mathbb{P}(I=i,\,R\in B)=\int_Bp(r)(\{i\})\,\nu(F)(dr)$ for all $i$ and Borel $B$. Define $\nu(G)$ and the kernel $q(\cdot)$ in the same way for $G$.

\Cref{tab:notation} collects the notation used throughout.

\begin{table}[ht]
\centering
\caption{Notation.}\label{tab:notation}
\small
\begin{tabular}{@{}lp{0.62\textwidth}l@{}}
\toprule
Symbol & Meaning & Defined in\\
\midrule
$\DU$ & laws of $X+Y$ over couplings of two standard uniforms & \cref{sec:setup}\\
$U$, $U_k$ & standard uniform; uniform on $\{0,\dots,k-1\}$ (law $\mathcal{U}_k$) & \cref{sec:setup}\\
$\le_{\mathrm{cx}}$, $\pi_X$ & convex order; stop-loss function $\pi_X(x)=\mathbb{E}(X-x)^+$ & \cref{sec:setup}\\
$P(m)$, $\Ph(m)$ & laws of $I+J$ for couplings of two copies of $\mathcal{U}_m$; $\Ph(m)=m\,P(m)$, the anti-diagonal X-rays of $m\times m$ doubly stochastic matrices & \cref{sec:setup,sec:grid}\\
$T$ & $kS-1$ for a law of $S$ on $\{1/k,\dots,(2k-1)/k\}$ & \cref{sec:grid}\\
$u\,\square\,u$ & infimal convolution of a potential $u$ on $[0,1]$ with itself & \cref{sec:duality}\\
$v$, $I(u)$ & grid infimal convolution of $u$ on $\{0,\dots,k-1\}$; the inequality $\sum_tv(t)x(t)\le2\sum_iu(i)$ & \cref{sec:gridineq}\\
$\eta_q$, $\xi_\ell$ & thresholds of a staircase and the thresholds of its tail-sum bound & \cref{sec:tails}\\
$S^\flat$, $\kappa$ & $S$ folded at $1$: $\kappa=\ind{S>1}$, $S^\flat=S-\kappa$ & \cref{cor:readings}\\
$\cG(b,a)$ & laws in $\DU$ carried by the lattice $a+b\Z$ & \cref{thm:B}\\
\bottomrule
\end{tabular}
\end{table}

\subsection{The reduction}

\begin{thm}[Fibered reduction]\label{thm:A}
Let $F,G$ be laws on $\R$, $a\in\R$, and $c=\frc(a)$.
\begin{enumerate}[label=(\roman*)]
\item If some coupling of $F$ and $G$ has $X+Y\in a+\Z$ almost surely, then $\nu(G)=\varphi_{\#}\nu(F)$.
\item If $\nu(G)=\varphi_{\#}\nu(F)$, the set of laws of $X+Y-c$ over couplings with $X+Y\in a+\Z$ almost surely equals the set of mixtures
\[
\int \mu^{+}(r)\,\nu(F)(dr),
\]
where $r\mapsto\mu(r)$ is Borel, $\mu(r)\in\Sigma\bigl(p(r),q(\varphi(r))\bigr)$ for $\nu(F)$-almost every $r$, and $\mu^{+}(r)$ is $\mu(r)$ shifted by $\ind{r>c}$. This set is convex.
\end{enumerate}
\end{thm}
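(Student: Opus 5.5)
The plan is to work with the decomposition $X=I+R$, $Y=J+R'$ into integer and fractional parts. The constraint $X+Y\in a+\Z$ then becomes a constraint on the fractional parts alone: $R+R'\in c+\Z$. Since $R,R'\in[0,1)$, this forces $R'=\varphi(R)$, because $\varphi(r)$ is the unique point of $[0,1)$ congruent to $a-r$ modulo one. Part (i) is then immediate: $R'=\varphi(R)$ almost surely, so $\nu(G)=\law(R')=\varphi_{\#}\law(R)=\varphi_{\#}\nu(F)$. We also record the identity
\[
X+Y-c=I+J+\ind{R>c},
\]
which follows from $r+\varphi(r)=c+\ind{r>c}$.

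For the inclusion ``$\subseteq$'' in (ii), take a coupling with $X+Y\in a+\Z$ almost surely. Condition on $R=r$. By (i), $R'=\varphi(r)$, and the conditional law of $(I,J)$ given $R=r$ is a coupling. Its first marginal is $p(r)$, by the definition of the kernel $p$. For the second marginal, $J$ given $R=r$ equals $J$ given $R'=\varphi(r)$, since $R'$ is a Borel bijective function of $R$ ($\varphi$ is an involution). Its law is therefore $q(\varphi(r))$ for $\nu(G)$-almost every value of $R'$; because $\varphi$ pushes $\nu(F)$ to $\nu(G)$, this holds for $\nu(F)$-almost every $r$. Let $\mu(r)$ be the conditional law of $I+J$ given $R=r$. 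Choosing regular conditional distributions gives a Borel dependence on $r$, and $\mu(r)\in\Sigma(p(r),q(\varphi(r)))$ almost everywhere. Integrating the displayed identity over $r$ yields $\law(X+Y-c)=\int\mu^+(r)\,\nu(F)(dr)$.

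For the reverse inclusion, we are given a Borel $\mu$ with $\mu(r)\in\Sigma(p(r),q(\varphi(r)))$, and we need a Borel selection $r\mapsto\gamma(r)$ of couplings of $p(r)$ and $q(\varphi(r))$ on $\Z^2$ whose sum-law is $\mu(r)$. I expect this measurable-selection step to be the main obstacle. For each $r$, the set of such couplings is a nonempty, weakly compact subset of probability measures on $\Z^2$ (it is tight because the marginals are fixed). The constraints depend measurably on $r$, so the correspondence has a measurable graph, and the Kuratowski--Ryll-Nardzewski or Aumann selection theorem provides a measurable $\gamma$ after modifying on a null set. A more hands-on alternative is to enumerate $\Z^2$ and select lexicographically, or to use the fact that couplings on a countable set reduce to countably many linear constraints. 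With $\gamma$ in hand, draw $R\sim\nu(F)$, then $(I,J)\sim\gamma(R)$, and set $X=I+R$ and $Y=J+\varphi(R)$.

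It remains to check this construction. First, $X\sim F$, because $(I,R)$ has exactly the joint law encoded by $\nu(F)$ and $p$. Second, $Y\sim G$: the joint law of $(J,R')$ with $R'=\varphi(R)$ is $\int\delta_{\varphi(r)}\otimes q(\varphi(r))\,\nu(F)(dr)$, and the change of variables $\varphi_{\#}\nu(F)=\nu(G)$ turns this into the law of $(\lfloor Y\rfloor,\frc(Y))$ for $Y\sim G$. Third, $X+Y\in a+\Z$ and $\law(X+Y-c)=\int\mu^+\,d\nu(F)$, both by the identity above. Convexity of the set follows from (ii) itself: a convex combination of couplings supported on $\{x+y\in a+\Z\}$ is again such a coupling, and the map from couplings to laws of $X+Y-c$ is affine. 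Alternatively, convexity comes from pointwise convex combinations of selections $\mu$, since each $\Sigma(\cdot,\cdot)$ is convex.
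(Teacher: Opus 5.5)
Your proposal is correct and follows the same route as the paper. Part (i) comes from $R'=\frc(a-R)=\varphi(R)$. The forward inclusion in (ii) comes from conditioning on $R$ and using $r+\varphi(r)=c+\ind{r>c}$. The reverse inclusion comes from a measurable selection of fiberwise couplings: the paper defers this step to Section~S1 and notes there that for bounded marginals, the only case it uses, no selection theorem is needed.
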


The conclusion does not depend on the choice of the kernels. Two versions of $p$ differ only on a $\nu(F)$-null set $N$, and two versions of $q$ only on a $\nu(G)$-null set $N'$; the fibers in (ii) change only for $r\in N\cup\varphi^{-1}(N')$, and $\varphi^{-1}(N')$ is $\nu(F)$-null because $\varphi_\#\nu(F)=\nu(G)$. So the set of mixtures is the same.

The proof of (ii) for arbitrary marginals uses a measurable selection theorem; it is in Section~S1, which also shows that bounded marginals, the only case used here, need no selection theorem. Part (i) is immediate: on $\{X+Y\in a+\Z\}$ the fractional parts $R$ of $X$ and $R'$ of $Y$ satisfy $R+R'\in\{c,c+1\}$, so $R'=\frc(a-R)=\varphi(R)$.

\begin{lem}[Constant fibers collapse]\label{lem:M}
Let $E$ be a Borel set with $\nu(F)(E)=w>0$, and suppose the fiber $\Sigma(p(r),q(\varphi(r)))$ equals a fixed polytope $\Delta$ for $\nu(F)$-almost every $r\in E$. Then the set of integrals $\int_E\mu(r)\,\nu(F)(dr)$ over Borel selections $\mu$ is $w\Delta$.
\end{lem}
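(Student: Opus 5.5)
The plan is to prove the two inclusions separately, viewing laws on $\Z$ as vectors in a finite-dimensional space: $\Delta$ is a polytope, so its elements are supported on a common finite set of integers. First I will check that every Borel selection $\mu$ with $\mu(r)\in\Delta$ for almost every $r\in E$ has finitely supported values off a null set. Hence $\int_E\mu\,d\nu(F)$ is a well-defined vector integral of a bounded measurable function, and it is enough to work coordinatewise on that finite support.

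For the inclusion $\int_E\mu(r)\,\nu(F)(dr)\in w\Delta$, I will normalize by $w$. The measure $\nu(F)(\cdot\cap E)/w$ is a probability measure on $E$, so $\frac1w\int_E\mu\,d\nu(F)$ is the barycenter of the pushforward of this probability measure under $\mu$. That pushforward is concentrated on the compact convex set $\Delta$. A barycenter of a probability measure on a compact convex subset of $\R^N$ lies in the set; I will argue this by the separating hyperplane theorem. If the barycenter $b$ lay outside $\Delta$, a linear functional $\ell$ with $\ell(b)>\max_\Delta\ell$ would give $\ell(b)=\int\ell(\mu)\,\le\max_\Delta\ell$, which is a contradiction. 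The null set where $\mu(r)\notin\Delta$ does not affect the integral.

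For the reverse inclusion, given $\delta\in\Delta$ I will take the constant selection $\mu(r)\equiv\delta$ on $E$. This is trivially Borel and lies in the fiber for almost every $r\in E$, and its integral is $w\delta$. So every point of $w\Delta$ is attained. This direction is immediate precisely because the fiber is constant; it is the reason for the lemma's name.

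I expect the only delicate step to be the first inclusion, and within it the measure-theoretic bookkeeping rather than any real difficulty. One must make sure that the integral of a kernel-valued map means the coordinatewise integral. One must also use closedness of $\Delta$; since $\Delta$ is a polytope, it is compact. Without closedness the barycenter could land on the boundary outside the set. With finite support and compactness in hand, the hyperplane argument closes the proof.
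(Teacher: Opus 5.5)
Your proof is the paper's: constant selections give all of $w\Delta$, and the normalized integral is the barycenter of a probability measure carried by the closed convex set $\Delta$, which you justify by the standard separation argument that the paper leaves implicit. Two harmless overstatements: a polytope of laws on $\Z$ need not have a common finite support (a singleton $\{q\}$ with $q$ infinitely supported is one), but strict separation of a point from the compact convex $\Delta$ in $\R^{\Z}$ can always use a functional involving only finitely many coordinates; and in finite dimensions the barycenter of a measure carried by a convex set lies in that set even without closedness.
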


\begin{proof}
Constant selections give $w\Delta$. Conversely, $w^{-1}\int_E\mu(r)\,\nu(F)(dr)$ is the barycenter of a probability measure carried by the closed convex set $\Delta$, so it lies in $\Delta$.
\end{proof}

\subsection{Two uniforms}

\begin{thm}[Two uniforms, every spacing]\label{thm:B}
Let $X,Y\sim U[0,1]$, $b>0$, $L=1/b$, $n=\lfloor L\rfloor$, and $\theta=\frc(L)$. Let $\cG(b,a)$ be the set of laws of $S=X+Y$ over couplings with $S\in a+b\Z$ almost surely.
\begin{enumerate}[label=(\roman*)]
\item $\cG(b,a)\ne\emptyset$ if and only if $\theta=0$ or $1\in a+b\Z$.
\item If $\theta=0$, then with $c=\frc(a/b)$,
\[
\{\law(S/b-c):\law(S)\in\cG(b,a)\}=c\,P(n)+(1-c)\,(P(n)+1).
\]
\item If $\theta>0$ and $1\in a+b\Z$, then $\frc(a/b)=\theta$ and
\[
\{\law(S/b-\theta):\law(S)\in\cG(b,a)\}=\frac{\theta(n+1)}{L}\,P(n+1)+\frac{(1-\theta)n}{L}\,(P(n)+1).
\]
\end{enumerate}
\end{thm}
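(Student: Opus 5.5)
We rescale to the integer lattice and apply \cref{thm:A} to $F=G=U[0,L]$. Put $X'=X/b$ and $Y'=Y/b$, which are uniform on $[0,L]$ with $L=n+\theta$, and set $a'=a/b$. Then $S\in a+b\Z$ exactly when $X'+Y'\in a'+\Z$, and $S/b-c=X'+Y'-c$ with $c=\frc(a')$. So $\cG(b,a)$, after rescaling, is the set in \cref{thm:A}(ii) for this $F=G$, and everything reduces to computing $\nu(F)$ and the kernel $p$. Writing $X'=I+R$, the law $\nu(F)$ has density $(n+1)/L$ on $[0,\theta)$ and $n/L$ on $[\theta,1)$. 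Given $R=r$, the integer part $I$ is uniform on $\{0,\dots,n\}$ if $r<\theta$ and uniform on $\{0,\dots,n-1\}$ if $r\ge\theta$. Thus $p(r)=\mathcal{U}_{n+1}$ on $[0,\theta)$ and $p(r)=\mathcal{U}_n$ on $[\theta,1)$, and $q=p$.

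For (i) we use \cref{thm:A}(i) for necessity and \cref{thm:A}(ii) for sufficiency. The set in \cref{thm:A}(ii) is nonempty as soon as $\nu(G)=\varphi_\#\nu(F)$, since every fiber $\Sigma(\cdot,\cdot)$ is nonempty (take the independent coupling). So $\cG(b,a)\neq\emptyset$ if and only if $\nu(F)$ is $\varphi$-invariant, where $\varphi(r)=\frc(c-r)$.

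\begin{itemize}
\item If $\theta=0$, then $\nu(F)$ is Lebesgue measure, and $\varphi$ preserves it.
\item If $\theta>0$, invariance means the density, which takes two distinct values, is a.e.\ constant along $\varphi$. That holds if and only if $\varphi$ maps $[0,\theta)$ onto itself up to null sets.
\end{itemize}

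This is the step needing the most care. Since $\varphi$ is $r\mapsto c-r$ on $[0,c]$ and $r\mapsto 1+c-r$ on $(c,1)$, a short case check shows that the image of $[0,\theta)$ has null symmetric difference with $[0,\theta)$ only when $c=\theta$. If $c<\theta$, the piece $(c,\theta)$ lands in $(1+c-\theta,1)$, which lies outside $[0,\theta)$ up to a null set unless $\theta=1$, and that is impossible. If $c>\theta$, the image is $(c-\theta,c]$, which differs from $[0,\theta)$. Finally, $c=\theta$ means $\frc(a/b)=\frc(1/b)$, which is the condition $1\in a+b\Z$. This also gives the first claim of (iii).

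For (ii) and (iii) we split $[0,1)$ into $E_0=[0,c]$ and $E_1=(c,1)$. On $E_0$ the shift $\ind{r>c}$ is $0$, and on $E_1$ it is $1$.

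\begin{itemize}
\item In case (ii), every fiber is $\Sigma(\mathcal{U}_n,\mathcal{U}_n)=P(n)$.
\item In case (iii), $c=\theta$. For $r\in[0,\theta)$ we have $\varphi(r)=\theta-r\in[0,\theta]$, so the fiber is $P(n+1)$ up to the null point $r=\theta$. For $r\in(\theta,1)$ we have $\varphi(r)=1+\theta-r\in(\theta,1)$, so the fiber is $P(n)$.
\end{itemize}

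Because a Borel selection on $E_0\cup E_1$ is the same as independent Borel selections on $E_0$ and on $E_1$, the set of mixtures in \cref{thm:A}(ii) is the Minkowski sum of the integrals over $E_0$ and over $E_1$. By \cref{lem:M}, each of these is the $\nu(F)$-mass of the piece times its constant polytope, shifted by $0$ or $1$. In case (ii) the masses are $c$ and $1-c$. In case (iii) they are $\theta(n+1)/L$ and $(1-\theta)n/L$. This gives the two displayed formulas.

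The degenerate cases are handled by the convention that weight-zero terms are dropped. These are $c=0$ in (ii), and $n=0$ (that is, $b>1$) in (iii), where $P(0)$ would otherwise appear with weight $0$. When a piece has positive $\nu(F)$-mass, \cref{lem:M} applies to it directly.
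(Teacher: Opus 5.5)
Your proposal is correct and follows the paper's proof essentially step for step. You rescale to $U[0,L]$, read off $\nu(F)$ and the kernels, get (i) from $\varphi$-invariance of the two-valued density (which forces $c=\theta$), and get (ii)--(iii) by splitting at $c$ and applying \cref{lem:M} to each piece. The one difference is that you obtain sufficiency in (i) from the nonemptiness of every fiber (independent coupling) in \cref{thm:A}(ii), where the paper writes down the couplings $Y=1-X$ and the fiberwise reversal directly.

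Two wording slips do not affect the argument. For $c<\theta$, the image $(1+c-\theta,1)$ is not \emph{contained} in $[0,\theta)$ up to a null set; it need not be disjoint from it. On $[0,\theta)$ the exceptional point is $r=0$, where $\varphi(0)=\theta$, not $r=\theta$.
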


For $b=1$ ($L=n=1$, $\theta=0$), part (ii) gives the single law $c\,\delta_c+(1-c)\,\delta_{c+1}$ with $c=\frc(a)$, which is $\delta_1$ when $c=0$. For $b>1$ ($n=0$, $\theta=L$) the second term of (iii) has weight zero and is omitted; the first fiber is $P(1)=\{\delta_0\}$, so $\cG(b,a)=\{\delta_1\}$ when $1\in a+b\Z$ and is empty otherwise.

\begin{proof}
Scale by $1/b$ and apply \cref{thm:A} to $X/b$, $Y/b$ with $a'=a/b$ and $c=\frc(a')$. The fractional part $R$ of $X/b$ has density $N(r)/L$ on $[0,1)$ (almost everywhere), where $N(r)=\#\{i\ge 0:i+r<L\}=\lceil L-r\rceil$, which equals $n+1$ on $[0,\theta)$ and $n$ on $[\theta,1)$ ($n$ everywhere if $\theta=0$). Given $R=r$, $\lfloor X/b\rfloor$ is uniform on $\{0,\dots,N(r)-1\}$. $Y/b$ has the same decomposition.

(i), necessity. \Cref{thm:A}(i) requires $\varphi$ to preserve the density $N(\cdot)/L$, that is, $N(\varphi(r))=N(r)$ for almost every $r$. If $\theta>0$ this says that $\varphi$ maps $[0,\theta)$ onto itself up to a null set. Now $\varphi([0,\theta))$ is the arc $(c-\theta,c]$ mod 1, which coincides with $[0,\theta)$ up to a null set if and only if $c=\theta$, that is, if and only if $a/b-L\in\Z$, which is the same as $1\in a+b\Z$.

(i), sufficiency. If $\theta>0$ and $1\in a+b\Z$, the coupling $Y=1-X$ works. If $\theta=0$, the reversal $i\mapsto n-1-i$ on every fiber gives a law on the grid.

(ii) With $\theta=0$ every fiber is $P(n)$, so $\mu^+(r)$ ranges over $P(n)$ for $r\in[0,c]$ and over $P(n)+1$ for $r\in(c,1)$. \Cref{lem:M} on the two pieces gives the claim.

(iii) With $c=\theta>0$: on $[0,\theta)$ we have $N(r)=n+1$ and $\varphi(r)=\theta-r\in(0,\theta]$, so $N(\varphi(r))=n+1$ except at $r=0$; there is no carry, and the $\nu$-mass is $\theta(n+1)/L$. On $(\theta,1)$ we have $N(r)=N(\varphi(r))=n$ and a carry of one; the $\nu$-mass is $(1-\theta)n/L$. \Cref{lem:M} gives the stated sum, and the masses add to $(n+\theta)/L=1$.
\end{proof}

\begin{rem}\label{rem:B}
The necessity half of (i), and the fractional-part density argument behind it, are due to Mao et al.\ \cite[Lemma~4(i)]{MaoWangWang2019}; their constructions in Lemma~4(ii),(iii) are fiberwise selections in the sense of \cref{thm:A}. Their characterization of two-point laws in $\DU$ (a law on $\{a,a+b\}$ with mean one belongs to $\DU$ if and only if $1/b\in\N$) follows from (i) and (ii): a mean-one law on $\{a,a+b\}$ has $1$ strictly between its atoms, so $1\notin a+b\Z$ and (i) forces $\theta=0$; conversely, with $\lfloor a/b\rfloor=n-1$, the law $c\,\delta_{n-1}+(1-c)\,\delta_{n}$ in (ii), the image of the reversal on every fiber, is a law of $S/b-c$ on $\{a/b-c,\,a/b-c+1\}$ with mean $1/b-c$, hence it is the unique mean-one law on $\{a,a+b\}$ after rescaling. If $L=m/q$ is rational, then $a+b\Z\subseteq a+(b/q)\Z$ and $1/(b/q)=m$ is an integer, so (iii) is the slice of (ii) on the finer grid by the coordinates off the coarse grid. Part (iii) is needed for irrational $L$, and for rational $L$ it uses two polytopes, of sizes $n+1$ and $n$, instead of one of size $m$.
\end{rem}

\section{The grid \texorpdfstring{$(1/k)\Z$}{(1/k)Z} and potential duality}\label{sec:grid}

\subsection{Count coordinates}
It is convenient to work with $\Ph(m)=m\,P(m)$, the set of \emph{anti-diagonal X-rays} $x=(x(0),\dots,x(2m-2))$ of $m\times m$ doubly stochastic matrices $Z$, where $x(t)=\sum_{i+j=t}Z_{ij}$. It is a polytope of dimension $2m-3$ for $m\ge2$ (\cref{lem:affhull}). Inequalities on $\Ph(m)$ are stated in these count coordinates; dividing both sides by $m$ gives their probabilistic form.

\begin{prop}[Lattice laws and X-rays]\label{prop:C}
A law $F$ on $\{1/k,2/k,\dots,(2k-1)/k\}$ belongs to $\DU$ if and only if the vector with entries $x(t)=k\,F(\{(t+1)/k\})$, $t=0,\dots,2k-2$, lies in $\Ph(k)$. No law in $\DU$ supported on $(1/k)\Z$ charges $0$ or $2$.
\end{prop}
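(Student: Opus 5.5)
The plan is to read the statement off \cref{thm:B}(ii), taking spacing $b=1/k$ and offset $a=0$. Then $L=k$, $n=k$, $\theta=0$ and $c=\frc(a/b)=\frc(0)=0$. Here $\cG(1/k,0)$ is exactly the set of laws in $\DU$ carried by $(1/k)\Z$. The key observation is that any law in $\DU$ supported on $(1/k)\Z$ comes from a coupling with $S=X+Y\in(1/k)\Z$ almost surely, so it lies in $\cG(1/k,0)$. Conversely, every element of $\cG(1/k,0)$ is a law in $\DU$ supported on $(1/k)\Z$.

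Next I apply \cref{thm:B}(ii) with $c=0$. The first term $c\,P(k)$ has weight zero and is dropped under the stated convention, so
\[
\{\law(kS):\law(S)\in\cG(1/k,0)\}=P(k)+1 .
\]
Since $P(k)=\Sigma(\mathcal U_k,\mathcal U_k)$ consists of laws on $\{0,\dots,2k-2\}$, every law of $kS$ in this set lives on $\{1,\dots,2k-1\}$. Hence every law in $\DU$ supported on $(1/k)\Z$ lives on $\{1/k,\dots,(2k-1)/k\}$ and gives no mass to $0$ or $2$, which is the second sentence of the statement.

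For the equivalence, let $F$ be a law on $\{1/k,\dots,(2k-1)/k\}$ and let $S\sim F$. Then $F\in\DU$ if and only if $F\in\cG(1/k,0)$, because $F$ is carried by the lattice. By the display, this holds if and only if the law of $T=kS-1$ lies in $P(k)$. The law of $T$ has mass $F(\{(t+1)/k\})$ at $t$, and $\Ph(k)=k\,P(k)$. So the condition is exactly that $x(t)=k\,F(\{(t+1)/k\})$ lies in $\Ph(k)$.

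There is no real obstacle here; the result is a direct specialization of \cref{thm:B}. The points to check are that $c=0$ correctly kills the unshifted term, and that $\Ph(m)=m\,P(m)$ matches the X-ray description. For the latter, a coupling of two copies of $\mathcal U_k$ is $Z/k$ with $Z$ doubly stochastic, and the law of $I+J$ is then the anti-diagonal X-ray of $Z$ divided by $k$.
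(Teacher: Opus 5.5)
Your proof is correct, but it takes a different route from the paper's. You get the proposition as a specialization of \cref{thm:B}(ii) with $b=1/k$, $a=0$, $c=0$; the paper itself remarks on this reading right after its proof.

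In your route the real content sits inside \cref{thm:A}(ii) and \cref{lem:M}. The carry $\ind{r>c}$ equals $1$ off the $\nu$-null set $\{0\}$, which is why only $P(k)+1$ survives.

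The paper instead argues directly and elementarily:
\begin{enumerate}[label=(\alph*)]
\item Given a coupling with $X+Y\in(1/k)\Z$, set $I=\lfloor kX\rfloor$ and $J=\lfloor kY\rfloor$. The fractional parts of $kX$ and $kY$ are almost surely nonzero and sum to an integer, hence to exactly $1$. So $kS=I+J+1$, with $(I,J)$ a coupling of two copies of $\mathcal{U}_k$.
\item Conversely, an independent $U\sim U[0,1]$ lifts any such coupling via $X=(I+U)/k$ and $Y=(J+1-U)/k$.
\item The claim about $0$ and $2$ follows because $S=0$ forces $X=0$ and $S=2$ forces $X=1$, both null events.
\end{enumerate}

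Your route buys brevity and a clean fit into the fibered framework. It also makes visible that the lattice case is the $c=0$ instance of a general statement.

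The direct route has three advantages:
\begin{enumerate}[label=(\roman*)]
\item It is self-contained and does not lean on \cref{thm:A}(ii), whose proof is deferred to Section~S1.
\item It writes down an explicit coupling realizing each doubly stochastic matrix.
\item Its endpoint argument shows that no law in $\DU$ whatsoever charges $0$ or $2$, not just the lattice ones.
\end{enumerate}
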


\begin{proof}
If $S=X+Y\in(1/k)\Z$ almost surely, put $I=\lfloor kX\rfloor$ and $J=\lfloor kY\rfloor$. The fractional parts of $kX$ and $kY$ then add to $1$ almost surely, so $kS=I+J+1$, and $(I,J)$ is a coupling of two uniform laws on $\{0,\dots,k-1\}$. Conversely, given such a coupling and an independent $U\sim U[0,1]$, the variables $X=(I+U)/k$ and $Y=(J+1-U)/k$ are standard uniform and $k(X+Y)=I+J+1$. Multiplying the coupling probabilities by $k$ gives a doubly stochastic matrix, whose X-ray is $x$. A mass at $0$ or $2$ would need an atom of a uniform marginal.
\end{proof}

This is \cref{thm:B}(ii) with $b=1/k$ and $a\in(1/k)\Z$, stated so that the polytope appears explicitly. It translates every linear inequality on $\Ph(k)$ into a statement about lattice laws in $\DU$, and conversely. Doubly stochastic matrices are the discrete copulas on the uniform grid \cite{PerroneSolusUhler2019}, which gives another reading of \cref{prop:C}.

The lattice laws are dense in $\DU$, and at a sharp rate: in the $\infty$-Wasserstein metric the Hausdorff distance between $\DU$ and its laws on $(1/k)\Z$ is exactly $1/k$ (Proposition~S2.1).

\subsection{Potential duality}\label{sec:duality}
We now introduce the inequalities used in the rest of the paper, first for $\DU$ itself. For a bounded function $u$ on $[0,1]$ let
\[
(u\,\square\,u)(s)=\inf\bigl\{u(x)+u(y):\ x,y\in[0,1],\ x+y=s\bigr\},\qquad s\in[0,2],
\]
the infimal convolution of $u$ with itself. If $u$ is continuous, so is $u\,\square\,u$. For every coupling, $(u\,\square\,u)(X+Y)\le u(X)+u(Y)$ pointwise, so
\begin{equation}\label{eq:potential}
\int(u\,\square\,u)\,dF\ \le\ 2\int_0^1u(x)\,dx\qquad\text{for every }F\in\DU,
\end{equation}
whenever $u\,\square\,u$ is Borel, for instance for continuous $u$ and for step functions with finitely many steps.

\Cref{thm:dual} below is Kantorovich duality for costs of the form $h(x+y)$ \cite{Kellerer1984,Villani2003}. Because the two marginals are equal, the two dual functions can be replaced by their average, so a single function $u$, which we call a \emph{potential}, indexes the constraints. The same symmetrization reduces a related dual problem for two uniforms to one potential in \cite{AnsariRockel2026}, and with indicator functions in place of $h$ the same duality gives the dual bounds for the distribution function of a sum \cite{EmbrechtsPuccetti2006}.

\begin{prop}[Kantorovich duality with one potential]\label{thm:dual}
A law $F$ belongs to $\DU$ if and only if $F([0,2])=1$ and \eqref{eq:potential} holds for every continuous $u$ on $[0,1]$.
\end{prop}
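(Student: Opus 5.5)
Necessity is already in hand. A law in $\DU$ is carried by $[0,2]$, and \eqref{eq:potential} holds for every continuous $u$ by the pointwise bound $(u\,\square\,u)(X+Y)\le u(X)+u(Y)$. The work is sufficiency, which I would prove by contraposition with a Hahn--Banach separation followed by Kantorovich duality for the cost $h(x+y)$.

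\textbf{Separation.} Suppose $F$ is a law on $[0,2]$ with $F\notin\DU$. The set $\DU$ is convex and weakly compact. Laws on $[0,2]$ with the weak topology sit inside $C([0,2])^*$ with the weak-$*$ topology. Separation therefore gives a continuous $h$ on $[0,2]$ with
\[
\int h\,dF>\max_{G\in\DU}\int h\,dG=\sup_{\pi}\int h(x+y)\,\pi(dx,dy),
\]
where the supremum runs over couplings $\pi$ of two standard uniforms.

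\textbf{Duality and symmetrization.} For the continuous cost $c(x,y)=h(x+y)$ on the compact square, Kantorovich duality \cite{Kellerer1984,Villani2003} gives
\[
\sup_\pi\int h(x+y)\,d\pi=\inf\Bigl\{\int_0^1 f+\int_0^1 g:\ f(x)+g(y)\ge h(x+y)\Bigr\},
\]
and the infimum may be taken over continuous $f,g$ (for instance via $c$-concave pairs, which inherit the modulus of continuity of $h$). Because the cost is symmetric in $x$ and $y$, the pair $(g,f)$ is also feasible. So $w=(f+g)/2$ satisfies $w(x)+w(y)\ge h(x+y)$ with the same value. Fix $\varepsilon>0$ smaller than the separation gap and choose a continuous $w$ with $2\int_0^1 w<\int h\,dF-\varepsilon$.

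\textbf{Turning this into a violated \eqref{eq:potential}.} Put $u=-w$. Then $u$ is continuous, and for every $s\in[0,2]$,
\[
(u\,\square\,u)(s)=-\sup_{x+y=s}\bigl(w(x)+w(y)\bigr)\le -h(s).
\]
Hence
\[
\int (u\,\square\,u)\,dF\le-\int h\,dF<-2\int_0^1 w-\varepsilon=2\int_0^1 u-\varepsilon .
\]
This gives the wrong sign. The separation should be run the other way: take $h$ with $\int h\,dF<\min_{G\in\DU}\int h\,dG$. Duality for the minimum reads
\[
\min_\pi\int h(x+y)\,d\pi=\sup\Bigl\{\int f+\int g:\ f(x)+g(y)\le h(x+y)\Bigr\}.
\]
Symmetrizing gives a continuous $u=(f+g)/2$ with $u(x)+u(y)\le h(x+y)$ and $2\int_0^1 u>\int h\,dF$. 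Then $u\,\square\,u\le h$ pointwise, since the inequality holds for every pair with $x+y=s$. Therefore
\[
\int(u\,\square\,u)\,dF\le\int h\,dF<2\int_0^1 u,
\]
which also has the wrong direction for a contradiction. The fix is to use $u\,\square\,u$ itself as the test function: $u\,\square\,u\ge$ nothing useful in general, so I would instead argue directly with the sup-form duality applied to $-h$. Writing the separation as $\int h\,dF>\sup_\pi\int h(x+y)\,d\pi$ with optimal continuous $f=g=w$, set $u=w$ and $\phi(s)=\inf_{x+y=s}\bigl(w(x)+w(y)\bigr)$. Then $\phi\ge h$, so
\[
\int(u\,\square\,u)\,dF\ge\int h\,dF>2\int_0^1 u,
\]
which violates \eqref{eq:potential}. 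This is the correct chain.

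\textbf{Main obstacle.} The delicate point is the justification that the dual infimum is attained, or approximated, by continuous $f,g$, so that the symmetrized $u$ is continuous. I would handle this by $c$-transforms: replacing $g$ by $g(y)=\sup_x\bigl(h(x+y)-f(x)\bigr)$, and then $f$ by the analogous transform of $g$, yields functions sharing the modulus of continuity of $h$. The strict inequality leaves room for an $\varepsilon$-optimal pair, so attainment itself is not needed.
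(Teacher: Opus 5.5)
Your final chain is correct and is essentially the paper's proof. You separate $F$ from the convex, weakly compact set $\DU$ by some $h\in C[0,2]$ with $\int h\,dF>\sup_\pi\int h(x+y)\,d\pi$, take a continuous $\varepsilon$-optimal dual pair, symmetrize it to $u=(f+g)/2$, and use $u\,\square\,u\ge h$ to get $\int(u\,\square\,u)\,dF\ge\int h\,dF>2\int_0^1u$; the paper phrases the same steps as a contradiction with $1/n$-optimal pairs. In the write-up, drop the two abandoned chains: the continuous $w$ in your first display already finishes the proof if you take $u=w$ instead of $u=-w$.
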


\begin{proof}
Necessity is the display above. For sufficiency, let $F$ be a law on $[0,2]$ satisfying \eqref{eq:potential} and suppose $F\notin\DU$. The set $\DU$ is convex and weakly closed (\cref{sec:setup}), and the laws on $[0,2]$ carry the weak topology induced by $C[0,2]$; by the Hahn--Banach theorem there is $h\in C[0,2]$ with $\int h\,dF>V(h)$, where $V(h)=\sup_{G\in\DU}\int h\,dG=\sup\mathbb{E}h(X+Y)$ over couplings of two standard uniforms. By Kantorovich duality for the continuous cost $-h(x+y)$ on $[0,1]^2$ \cite[Chapter~1]{Villani2003} (see \cite{Kellerer1984} for duality in general marginal problems), for every $n\ge1$ there are $\alpha,\beta\in C[0,1]$ with $\alpha(x)+\beta(y)\ge h(x+y)$ for all $x,y$ and $\int\alpha+\int\beta\le V(h)+1/n$. Put $u=(\alpha+\beta)/2$. Adding the constraint at $(x,y)$ and at $(y,x)$ gives $u(x)+u(y)\ge h(x+y)$, so $h\le u\,\square\,u$ on $[0,2]$, and $2\int u=\int\alpha+\int\beta$. Hence $\int h\,dF\le\int(u\,\square\,u)\,dF\le2\int u\le V(h)+1/n$ for every $n$, a contradiction.
\end{proof}

The content of \cref{thm:dual} lies in deciding which potentials are needed. Three classes of potentials have a direct probabilistic meaning.

\begin{cor}[Three readings]\label{cor:readings}
Let $F$ be a law on $[0,2]$ and $S\sim F$, put $\kappa=\ind{S>1}$ and $S^\flat=S-\kappa$, and let $U$ be standard uniform.
\begin{enumerate}[label=(\roman*)]
\item \emph{Convex potentials.} If $u$ is convex, $(u\,\square\,u)(s)=2u(s/2)$, and \eqref{eq:potential} holds for every continuous convex $u$ if and only if $S\le_{\mathrm{cx}}2U$.
\item \emph{Concave potentials.} If $u$ is concave, $(u\,\square\,u)(s)=u(s-\ind{s>1})+u(\ind{s>1})$, and \eqref{eq:potential} holds for every continuous concave $u$ if and only if $\law(U)\le_{\mathrm{cx}}\frac12\bigl(\law(S^\flat)+\law(\kappa)\bigr)$.
\item \emph{One step.} If $F\in\DU$ and $0<\eta<1$, then
\begin{equation}\label{eq:onestep}
\mathbb{P}(S\ge2\eta)+\mathbb{P}(S\ge1+\eta)\ \le\ 2(1-\eta),
\end{equation}
which is \eqref{eq:potential} for $u=\ind{x\ge\eta}$. Equality holds for the coupling $Y=X$ on $\{X<\eta\}$ and $Y=1+\eta-X$ on $\{X\ge\eta\}$.
\end{enumerate}
\end{cor}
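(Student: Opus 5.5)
The three parts are handled separately. In each one I first compute $u\,\square\,u$ for the given class of potentials, and then identify which integral inequalities \eqref{eq:potential} expresses.

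\emph{(i) Convex potentials.} For convex $u$ and $x+y=s$, Jensen's inequality gives $u(x)+u(y)\ge 2u(s/2)$. The point $x=y=s/2$ lies in $[0,1]$ and attains this value, so $(u\,\square\,u)(s)=2u(s/2)$. With this formula, \eqref{eq:potential} becomes $\mathbb{E}\,u(S/2)\le\int_0^1u=\mathbb{E}\,u(U)$ for every continuous convex $u$ on $[0,1]$. Since $S/2$ takes values in $[0,1]$, this is exactly $S/2\le_{\mathrm{cx}}U$, which is equivalent to $S\le_{\mathrm{cx}}2U$. The class of continuous convex functions on $[0,1]$ suffices: any convex function on $\R$ restricts to a continuous convex function on the compact interval, up to its values at the endpoints, and for the reverse direction the functions $(x-t)^+$ and the affine functions already generate convex order.

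\emph{(ii) Concave potentials.} For concave $u$, the map $x\mapsto u(x)+u(s-x)$ is concave on the feasible interval $[\max(0,s-1),\min(1,s)]$, so its infimum is attained at an endpoint. If $s\le1$ the endpoints are $x=0,s$ and the value is $u(0)+u(s)$. If $s>1$ the endpoints are $x=s-1,1$ and the value is $u(s-1)+u(1)$. Both cases are captured by the formula $u(s-\ind{s>1})+u(\ind{s>1})$, that is, $u(S^\flat)+u(\kappa)$. Then \eqref{eq:potential} reads
\[
\tfrac12\bigl(\mathbb{E}\,u(S^\flat)+\mathbb{E}\,u(\kappa)\bigr)\le\mathbb{E}\,u(U).
\]
Here $S^\flat\in[0,1]$ and $\kappa\in\{0,1\}$, so the left side is the integral of $u$ against the mixture $\frac12(\law(S^\flat)+\law(\kappa))$, a law on $[0,1]$. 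Replacing $u$ by $-u$, which is convex, shows that the inequality for all continuous concave $u$ is equivalent to $\law(U)\le_{\mathrm{cx}}\frac12(\law(S^\flat)+\law(\kappa))$.

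\emph{(iii) One step.} Take $u=\ind{x\ge\eta}$. The infimum is over $u(x)+u(y)$ with $x+y=s$, and its value is the minimum number of coordinates that must be at least $\eta$:
\begin{itemize}[label={}]
\item[] $u\,\square\,u=0$ when both coordinates can be taken below $\eta$, which is possible iff $s<2\eta$;
\item[] $u\,\square\,u=1$ when one coordinate can be below $\eta$ and the other at most $1$, which is possible iff $s<1+\eta$;
\item[] $u\,\square\,u=2$ otherwise.
\end{itemize}
Hence $u\,\square\,u=\ind{s\ge2\eta}+\ind{s\ge1+\eta}$. This is a Borel step function, so \eqref{eq:potential} applies, and since $2\int u=2(1-\eta)$ it gives \eqref{eq:onestep}.

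For equality, first check that the proposed coupling has uniform marginals. On $\{X\ge\eta\}$ the map $X\mapsto1+\eta-X$ sends $[\eta,1]$ onto itself and preserves Lebesgue measure, and on $\{X<\eta\}$ we have $Y=X$; so $Y\sim U$. Now compute the two probabilities:
\begin{itemize}[label={}]
\item[] on $\{X<\eta\}$, $S=2X<2\eta$, so neither event occurs;
\item[] on $\{X\ge\eta\}$, $S=1+\eta$, so both events occur, because $2\eta\le1+\eta$.
\end{itemize}
The left side of \eqref{eq:onestep} is therefore $2\,\mathbb{P}(X\ge\eta)=2(1-\eta)$.

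The only step that needs care is the justification in (i) and (ii) that continuous convex functions on $[0,1]$ determine convex order. That is standard, so I expect no real obstacle.
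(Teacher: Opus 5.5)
Your proposal is correct and follows the paper's proof essentially step by step: midpoint convexity for (i), the endpoint argument for concave $u$ followed by passing to the convex $w=-u$ for (ii), and the same split computation and reflection coupling for (iii). One small correction: the caveat ``up to its values at the endpoints'' in (i) is misplaced, since a finite convex function on $\R$ is already continuous on $[0,1]$; the direction that actually needs an argument is the converse, where a continuous convex $u$ on $[0,1]$ may have infinite endpoint slopes and must be approximated uniformly by affine-plus-hinge interpolants, as you indicate.
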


\begin{proof}
(i) By convexity $u(x)+u(s-x)\ge2u(s/2)$, and $x=s/2$ is admissible. So \eqref{eq:potential} for convex $u$ reads $\mathbb{E}u(S/2)\le\mathbb{E}u(U)$, and for all continuous convex $u$ on $[0,1]$ this is $S/2\le_{\mathrm{cx}}U$. (ii) The function $x\mapsto u(x)+u(s-x)$ is concave on the admissible interval $[\max(0,s-1),\min(1,s)]$, so it is minimal at an endpoint; the two endpoints give the same value, $u(0)+u(s)$ if $s\le1$ and $u(s-1)+u(1)$ if $s\ge1$. So the inequalities read $\mathbb{E}w(S^\flat)+\mathbb{E}w(\kappa)\ge2\,\mathbb{E}w(U)$ for all continuous convex $w=-u$, which is the stated convex order; the affine potentials, which are both convex and concave, force equal means, $\mathbb{E}S^\flat+\mathbb{E}\kappa=\mathbb{E}S=1$. (iii) On $[0,2]$, a split $s=x+y$ with $x,y<\eta$ exists if and only if $s<2\eta$, and a split with $x<\eta$ and $y\le1$ exists if and only if $s<1+\eta$, so $u\,\square\,u=\ind{s\ge2\eta}+\ind{s\ge1+\eta}$ and $2\int u=2(1-\eta)$. The coupling maps $[\eta,1]$ onto itself by a reflection, so $Y$ is uniform; it gives $S=2X<2\eta$ with probability $\eta$ and $S=1+\eta$ with probability $1-\eta$, so both probabilities in \eqref{eq:onestep} equal $1-\eta$.
\end{proof}

Part (i) is the classical necessary condition: every sum is dominated in convex order by the comonotonic sum, and for three or more uniforms this condition is also sufficient \cite{MaoWangWang2019}. Part (ii) is the analogous condition on the other side, obtained by folding $S$ at $1$. For two uniforms neither is sufficient, and part (iii) gives a constraint that neither implies (\cref{ex:twopoint}). Individually, $\sup_{\DU}\mathbb{P}(S\ge s)=\min(1,2-s)$ \cite{Makarov1981,Ruschendorf1982}, so the two worst cases in \eqref{eq:onestep} add up to $\min(1,2-2\eta)+1-\eta$, which exceeds $2(1-\eta)$ for every $\eta\in(0,1)$. The worst cases at the thresholds $2\eta$ and $1+\eta$ therefore cannot occur together, and \eqref{eq:onestep} is the sharp joint bound. Reflecting $S\mapsto2-S$ gives the lower-tail version $\mathbb{P}(S\le2\epsilon)+\mathbb{P}(S\le\epsilon)\le2\epsilon$ for $0<\epsilon<1$. For $p$-values $X,Y$ with arbitrary dependence and their average $\bar p=(X+Y)/2$ it reads $\mathbb{P}(\bar p\le\epsilon)+\mathbb{P}(\bar p\le\epsilon/2)\le2\epsilon$, a joint form of the factor $2$ for merging by the arithmetic mean \cite{VovkWang2020}. This holds for exactly uniform $p$-values, and hence also for valid ones ($\mathbb{P}(X\le t)\le t$), since after randomization a valid $p$-value dominates a uniform variable almost surely. Which potentials are needed in \cref{thm:dual}, beyond these three classes, is the subject of \cref{sec:tails,sec:density}.

\begin{ex}\label{ex:twopoint}
The law $F=\frac12\delta_{0.6}+\frac12\delta_{1.4}$ has mean one and satisfies both (i) and (ii): its stop-loss function $\mathbb{E}(S-x)^+$ stays below $(2-x)^2/4=\mathbb{E}(2U-x)^+$ (the gap is $x^2/4$ on $[0,0.6]$ and at least $0.05$ on $[0.6,1.4]$), and the folded law $\frac14(\delta_0+\delta_{0.4}+\delta_{0.6}+\delta_1)$ has a stop-loss function above that of $U$ at each of its atoms, hence everywhere. But \eqref{eq:onestep} with $\eta=0.3$ gives $1+\frac12>1.4$. Hence $F\notin\DU$, in agreement with the two-point theorem of \cite{MaoWangWang2019} ($1/b=1.25\notin\N$). On the grid $k=10$ the inequality used is $I(u)$ for $u=\ind{i\ge3}$ (\cref{sec:gridineq}), which defines a facet of $\Ph(10)$: the companion paper \cite{ChaudharyKhandelwalFacets} shows that every single step $u=\ind{i\ge a}$, $1\le a\le k-1$, defines a facet of $\Ph(k)$ for every $k\ge3$.
\end{ex}

\subsection{Grid inequalities}\label{sec:gridineq}
On the grid, step functions give a finite description. For $u:\{0,\dots,k-1\}\to\R$ put $v(t)=\min_{i+j=t}\bigl(u(i)+u(j)\bigr)$, $t=0,\dots,2k-2$, the discrete infimal convolution of $u$ with itself, and
\[
I(u):\qquad \sum_{t=0}^{2k-2}v(t)\,x(t)\ \le\ 2\sum_{i=0}^{k-1}u(i).
\]
Every permutation $\sigma$ satisfies $v(i+\sigma(i))\le u(i)+u(\sigma(i))$ termwise, so $I(u)$ is valid on $\Ph(k)$. Let $\tilde u$ be the upper semicontinuous step function on $[0,1]$ equal to $u(i)$ on $(i/k,(i+1)/k)$ and to $\max(u(i-1),u(i))$ at $i/k$, where $u(-1)=u(k)=-\infty$. A split $(t+1)/k=x+y$ with $x$ in the open cell $(i/k,(i+1)/k)$ has $y$ in the open cell of $t-i$, and a split at two lattice points costs at least as much as one of the neighboring open splits, so $(\tilde u\,\square\,\tilde u)((t+1)/k)=v(t)$; also $2\int\tilde u=\frac2k\sum_iu(i)$. So for a law of $S$ on $\{1/k,\dots,(2k-1)/k\}$, with $T=kS-1$ and $x(t)=k\,\mathbb{P}(T=t)$, \eqref{eq:potential} for $\tilde u$ reads $\mathbb{E}\,v(T)\le2\,\mathbb{E}\,u(U_k)$, which is $I(u)$ divided by $k$.

\begin{lem}[Affine hull]\label{lem:affhull}
For $k\ge2$ the affine hull of $\Ph(k)$ is $\{x:\sum_tx(t)=k,\ \sum_tt\,x(t)=k(k-1)\}$, so $\dim\Ph(k)=2k-3$. If $u'(i)=u(i)+\alpha+\beta i$, then $v'(t)=v(t)+2\alpha+\beta t$, and $I(u')$ and $I(u)$ agree on the affine hull.
\end{lem}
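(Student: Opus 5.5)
The plan has three parts: show that the two stated equations hold on all of $\Ph(k)$; show they cut out an affine subspace of dimension exactly $2k-3$ that contains $\Ph(k)$ in full dimension; and check the transformation rule for $v$ directly.

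\emph{Equations.} For the permutation matrix of $\sigma$, the X-ray has $x(t)=\#\{i:i+\sigma(i)=t\}$. Summing over $t$ gives $\sum_tx(t)=k$. Weighting by $t$ gives $\sum_t t\,x(t)=\sum_i\bigl(i+\sigma(i)\bigr)=2\binom{k}{2}=k(k-1)$. Both identities are linear, so they pass to convex combinations, and by Birkhoff--von Neumann they hold on all of $\Ph(k)$. The two linear forms are independent for $k\ge2$, since the coordinate vectors $(1)$ and $(t)$ are not proportional. Their common solution set in $\R^{2k-1}$ is therefore an affine subspace of dimension $2k-3$. It follows that $\dim\Ph(k)\le 2k-3$.

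\emph{Lower bound on dimension.} I will show that the X-rays of permutations span this subspace affinely, by exhibiting $2k-2$ affinely independent X-rays. Equivalently, I will show that the differences of X-rays span the $(2k-3)$-dimensional space $\{y:\sum y(t)=0,\ \sum t\,y(t)=0\}$. That space is spanned by the second differences $e_{t-1}-2e_t+e_{t+1}$ for $t=1,\dots,2k-3$, which are $2k-3$ independent vectors. So it suffices to realize each second difference, or an equivalent spanning set, as a combination of differences of permutation X-rays.

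A transposition does this locally. Take a permutation containing the pairs $(i,j)$ and $(i+1,j+1)$, and swap them to $(i,j+1)$ and $(i+1,j)$. The X-ray changes by $2e_{i+j+1}-e_{i+j}-e_{i+j+2}$, which is minus the second difference at $t=i+j+1$. Every $t=1,\dots,2k-3$ can be written as $t=i+j+1$ with $0\le i,j\le k-2$. Such $i,j$ with $i\ne j+1$ and $j\ne i+1$ can be embedded in a permutation that sends $i\mapsto j$ and $i+1\mapsto j+1$, and I complete it arbitrarily on the remaining indices. In the cases $i=j+1$ or $j=i+1$ the four cells still involve distinct rows $i,i+1$ and distinct columns $j,j+1$, so the construction still works. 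The small cases $k=2,3$ I would check directly.

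\emph{Transformation rule.} This is a direct computation. For $i+j=t$,
\[
u'(i)+u'(j)=u(i)+u(j)+2\alpha+\beta t .
\]
Minimizing over $i+j=t$ gives $v'(t)=v(t)+2\alpha+\beta t$. The right-hand side of $I(u')$ is $2\sum_iu(i)+2k\alpha+\beta k(k-1)$. On the affine hull, the left-hand side of $I(u')$ is $\sum_tv(t)x(t)+2\alpha k+\beta k(k-1)$. The added constants on the two sides are equal, so $I(u')$ and $I(u)$ agree there.

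The main obstacle is the embedding step in the lower bound: checking that the required local $2\times2$ pattern can be placed inside a permutation for every $t$, including the boundary values $t=1$ and $t=2k-3$.
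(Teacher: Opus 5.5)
Your proposal is correct and follows the paper's proof essentially verbatim. Both equations hold on permutation X-rays and pass to $\Ph(k)$ by Birkhoff--von Neumann; the second differences $e_{t-1}-2e_t+e_{t+1}$ are realized, up to sign, as differences of permutation X-rays by swapping on two adjacent rows; and the affine change is checked by the same bookkeeping. The embedding step you flag as the main obstacle is immediate: rows and columns are separate index sets, so the assignment $i\mapsto j$, $i+1\mapsto j+1$ extends to a permutation for all $0\le i,j\le k-2$, and neither $i=j\pm1$ nor $k=2,3$ needs separate treatment.
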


\begin{proof}
Both equations hold for the X-ray of every permutation, since $\sum_i(i+\sigma(i))=k(k-1)$, hence on $\Ph(k)$. Their solution set has dimension $2k-3$, and its direction space is spanned by the $2k-3$ independent second differences $e_{t-1}-2e_t+e_{t+1}$, $1\le t\le2k-3$. Each of them is a difference of two X-rays of permutations: write $t=i+s$ with $0\le i\le k-2$ and $1\le s\le k-1$, take a permutation $\sigma$ with $\sigma(i)=s$ and $\sigma(i+1)=s-1$, and compose it with the transposition of $i$ and $i+1$; the sums $t,t$ at rows $i,i+1$ become $t-1,t+1$. For the second claim, the right side changes by $2k\alpha+\beta k(k-1)$, which equals the change of the left side on the affine hull.
\end{proof}

\begin{lem}[Grid duality]\label{lem:gridduality}
Let $x\ge0$ on $\{0,\dots,2k-2\}$. Then $x\in\Ph(k)$ if and only if $I(u)$ holds at $x$ for every $u:\{0,\dots,k-1\}\to\R$.
\end{lem}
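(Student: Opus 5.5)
Necessity is already in hand: $I(u)$ is valid on $\Ph(k)$ because every permutation X-ray satisfies it termwise and $\Ph(k)$ is the convex hull of those X-rays. So the content is sufficiency, and I would prove it by finite-dimensional LP duality (Farkas) rather than by the continuous \cref{thm:dual}. This keeps the argument self-contained on the grid.

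First write membership as a transportation feasibility problem. We have $x\in\Ph(k)$ if and only if there is $Z\ge0$ with row sums $1$, column sums $1$ and anti-diagonal sums $\sum_{i+j=t}Z_{ij}=x(t)$ for every $t$. These three families of constraints are not independent. Their right-hand sides are compatible only if $\sum_t x(t)=k$; the first moment also has to match, but that will fall out of the dual. By Farkas' lemma, infeasibility means there are multipliers $\alpha(i)$, $\beta(j)$, $\gamma(t)$ with two properties. The first is $\alpha(i)+\beta(j)+\gamma(i+j)\ge0$ for all $i,j$. The second is $\sum_i\alpha(i)+\sum_j\beta(j)+\sum_t\gamma(t)x(t)<0$.

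Next, symmetrize exactly as in the proof of \cref{thm:dual}. Put $u=(\alpha+\beta)/2$. Adding the constraint at $(i,j)$ and at $(j,i)$ gives $u(i)+u(j)\ge-\gamma(i+j)$, so $v(t)\ge-\gamma(t)$ for every $t$. Since $x\ge0$, this yields
\[
\sum_t v(t)x(t)\ \ge\ -\sum_t\gamma(t)x(t)\ >\ \sum_i\alpha(i)+\sum_j\beta(j)\ =\ 2\sum_iu(i).
\]
So $I(u)$ fails at $x$. This is the contrapositive of sufficiency.

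The normalization $\sum_t x(t)=k$ needs no separate hypothesis, because it is itself implied by the family. Taking $u\equiv1$ gives $v\equiv2$ and $\sum_t x(t)\le k$. Taking $u\equiv-1$ gives the reverse inequality. The moment condition of \cref{lem:affhull} follows in the same way from $u(i)=\pm i$. Hence the Farkas alternative above applies to every $x\ge0$ without extra assumptions. Hypothesis $x\ge0$ is used only in the step that replaces $\gamma$ by $-v$.

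The main obstacle is getting the Farkas alternative in the right form, with nonnegativity on $Z$ only and equality constraints for rows, columns and X-ray. The multipliers are then free in sign, and the constraint is the single inequality $\alpha(i)+\beta(j)+\gamma(i+j)\ge0$, as used above. Once that is stated correctly, the symmetrization is the only idea needed and the rest is bookkeeping. One could alternatively go through \cref{prop:C} and \cref{thm:dual} with the step functions $\tilde u$, but the discrete LP route avoids approximating continuous potentials by step functions and is shorter.
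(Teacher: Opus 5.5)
Your proof is correct and follows the paper's argument. The paper separates $x$ from the polytope $\Ph(k)$ by a vector $c$ and then uses LP duality for the assignment problem to get $\alpha,\beta$ with $\alpha(i)+\beta(j)\ge c(i+j)$, whereas your single Farkas alternative yields the same data at once, with $-\gamma$ in the role of $c$; the decisive step is identical (symmetrizing $u=(\alpha+\beta)/2$ and using $x\ge0$ to pass to $v$), and your paragraph deriving the affine-hull equations from $u\equiv\pm1$ and $u(i)=\pm i$ is correct but not needed.
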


\begin{proof}
Necessity: if $x$ is the X-ray of a doubly stochastic $Z$, then $\sum_tv(t)x(t)=\sum_{i,j}Z_{ij}\,v(i+j)\le\sum_{i,j}Z_{ij}\bigl(u(i)+u(j)\bigr)=2\sum_iu(i)$. Sufficiency: $\Ph(k)$ is a polytope, so if $x\notin\Ph(k)$ there is $c\in\R^{2k-1}$ with $\sum_tc(t)x(t)>\max\{\sum_{i,j}c(i+j)Z_{ij}\}$, the maximum over doubly stochastic $Z$. By linear programming duality for the assignment problem this maximum equals $\min\{\sum_i\alpha(i)+\sum_j\beta(j):\ \alpha(i)+\beta(j)\ge c(i+j)\ \forall i,j\}$. For an optimal pair put $u=(\alpha+\beta)/2$; adding the constraints at $(i,j)$ and $(j,i)$ gives $u(i)+u(j)\ge c(i+j)$, so $c\le v$, and $2\sum_iu(i)=\sum\alpha+\sum\beta$. Since $x\ge0$, $\sum_tc(t)x(t)\le\sum_tv(t)x(t)\le2\sum_iu(i)$, a contradiction.
\end{proof}

\Cref{lem:gridduality} is the grid form of \cref{thm:dual}; Li et al.\ \cite[Proposition~3.1]{LiLiuYao2026} prove it in the convention of diagonal X-rays. They also show \cite[Proposition~3.4]{LiLiuYao2026} that finitely many potentials suffice: the rays of a tropical fan. On each grid, then, the lattice part of $\DU$ is cut out by nonnegativity, the affine hull, and finitely many inequalities $I(u)$, and the minimal list of them is the list of facets of $\Ph(k)$ other than the coordinate facets $x(t)\ge0$. The facets are the subject of the companion paper \cite{ChaudharyKhandelwalFacets}; no result of this paper depends on it. Here we need only the potentials that give convex order.

For $1\le j\le k-2$ let $h_j(i)=(i-j)^+$, the \emph{hinge} at $j$. Write $U_k$ for a random variable with law $\mathcal{U}_k$.

\begin{prop}[Hinges]\label{prop:hinges}
Let $k\ge3$ and let $x\ge0$ lie on the affine hull of $\Ph(k)$, viewed as the law of $T$ with $\mathbb{P}(T=t)=x(t)/k$.
\begin{enumerate}[label=(\roman*)]
\item For $u=h_j$, $v(t)=(t-2j)^+$, and $I(h_j)$ reads $\mathbb{E}(T-2j)^+\le\mathbb{E}(2U_k-2j)^+$. The $k-2$ inequalities $I(h_j)$ hold if and only if $T\le_{\mathrm{cx}}2U_k$.
\item For $u=-h_j$, $I(-h_j)$ reads $\mathbb{E}(T^\flat-j)^++\mathbb{E}(\kappa_k-j)^+\ge2\,\mathbb{E}(U_k-j)^+$, where $\kappa_k=(k-1)\ind{T\ge k}$ and $T^\flat=T-\kappa_k$ (grid analogues of $\kappa$ and $S^\flat$, not their rescalings). The $k-2$ inequalities $I(-h_j)$ hold if and only if $U_k\le_{\mathrm{cx}}\frac12\bigl(\law(T^\flat)+\law(\kappa_k)\bigr)$.
\end{enumerate}
\end{prop}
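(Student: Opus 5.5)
For both parts I would compute the discrete infimal convolution $v$ explicitly, check that $I(u)$ becomes the stated stop-loss inequality, and then show that the $k-2$ stop-loss inequalities at the hinge points already give the full stop-loss comparison. The last step uses two facts. First, the laws involved live on the integers, so their stop-loss functions are piecewise linear with kinks only at lattice points. Second, the affine hull (\cref{lem:affhull}) fixes the total mass and the mean: $\sum_t x(t)=k$ gives a probability law, and $\sum_t t\,x(t)=k(k-1)$ gives $\mathbb{E}T=k-1=\mathbb{E}\,2U_k$.

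(i) Since $h_j$ is convex on $\{0,\dots,k-1\}$, the map $i\mapsto h_j(i)+h_j(t-i)$ on the admissible range $\max(0,t-k+1)\le i\le\min(t,k-1)$ is minimized at the most balanced split. For even $t$ this gives $v(t)=2(t/2-j)^+=(t-2j)^+$. For odd $t$ it gives $((t-1)/2-j)^++((t+1)/2-j)^+$. This also equals $(t-2j)^+$: either both terms are positive, or $(t+1)/2\le j$ and both vanish.

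The right side is $2\sum_i(i-j)^+=k\,\mathbb{E}(2U_k-2j)^+$, so $I(h_j)$ divided by $k$ is the stated inequality.

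For the equivalence, note that $T$ and $2U_k$ have equal means and are both supported in $\{0,\dots,2k-2\}$. Their stop-loss functions therefore agree on $(-\infty,0]$ and on $[2k-2,\infty)$. The function $\pi_{2U_k}$ is linear on each $[2j,2j+2]$, because $2U_k$ has no mass at odd points, while $\pi_T$ is convex. So $\pi_T\le\pi_{2U_k}$ at the even points $0,2,\dots,2k-2$ gives the inequality everywhere. By the boundary remarks, only the points $2j$ with $1\le j\le k-2$ need checking, and these are exactly the hinge inequalities. The converse direction is immediate.

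(ii) Here $u=-h_j$ is concave, so $i\mapsto u(i)+u(t-i)$ is concave on the admissible range and is minimized at an endpoint. The two endpoints give the same value, as in \cref{cor:readings}(ii).
\begin{itemize}
\item For $t\le k-1$, take $i=0$. Since $j\ge1$, $h_j(0)=0$, so $v(t)=-(t-j)^+$.
\item For $t\ge k$, take $i=k-1$. Then $v(t)=-(k-1-j)-(t-k+1-j)^+$.
\end{itemize}
With $\kappa_k=(k-1)\ind{T\ge k}$ and $T^\flat=T-\kappa_k\in\{0,\dots,k-1\}$, both cases read $-v(T)=(T^\flat-j)^++(\kappa_k-j)^+$. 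So $I(-h_j)$ divided by $k$ is the stated inequality.

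Let $M=\frac12(\law(T^\flat)+\law(\kappa_k))$. This is a law on $\{0,\dots,k-1\}$ with mean $\frac12\mathbb{E}T=(k-1)/2=\mathbb{E}U_k$. Both $\pi_M$ and $\pi_{U_k}$ are linear between consecutive integers, and they agree outside $(0,k-1)$ by equal means and common support. Hence $U_k\le_{\mathrm{cx}}M$ is equivalent to $\pi_{U_k}(j)\le\pi_M(j)$ for $1\le j\le k-2$, which are exactly the inequalities $I(-h_j)$.

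The main obstacle is the boundary bookkeeping in computing $v$: the admissible range of $i$ is truncated at $0$ and $k-1$, and in (i) one must check that odd $t$ near $2j$ still gives $(t-2j)^+$. The convex-order equivalences are routine once the equal means from \cref{lem:affhull} are used to discard the endpoint comparisons.
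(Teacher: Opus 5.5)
Your proposal is correct and follows the paper's proof essentially step for step. You use the balanced split for the convex hinge and the endpoint split for the concave one, then reduce convex order to the $k-2$ hinge inequalities via equal means (from \cref{lem:affhull}) and the piecewise linearity of stop-loss functions of integer-valued laws, with the odd points in (i) handled by convexity of $\pi_T$ against the linearity of $\pi_{2U_k}$ between even integers.

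One small wording fix: in the odd-$t$ check the first case should read ``both arguments nonnegative'' ($j\le(t-1)/2$) rather than ``both terms positive''. At $j=(t-1)/2$ the first term is $0$, but the identity $v(t)=(t-2j)^+$ still holds there.
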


\begin{proof}
(i) As in \cref{cor:readings}(i), the minimizing split of $t$ is the balanced one, which gives $v(t)=(t-2j)^+$ and $2\sum_iu(i)=k\,\mathbb{E}(2U_k-2j)^+$. For integer-valued laws with the same mean, $T\le_{\mathrm{cx}}2U_k$ means $\mathbb{E}(T-m)^+\le\mathbb{E}(2U_k-m)^+$ for every integer $m$. The even values $m=2j$ with $1\le j\le k-2$ are the inequalities $I(h_j)$; $m\le0$ and $m\ge2k-2$ hold trivially on the affine hull; and an odd $m$ follows from its neighbors, because $\mathbb{E}(T-m)^+$ is convex in $m$ while $\mathbb{E}(2U_k-m)^+$ is linear between consecutive even integers. (ii) As in \cref{cor:readings}(ii), the minimizing split of $t$ is $(0,t)$ for $t\le k-1$ and $(t-k+1,k-1)$ for $t\ge k-1$, so $v(T)=u(T^\flat)+u(\kappa_k)$, and the inequality follows by expanding $u=-h_j$. The means agree because $\mathbb{E}T^\flat+\mathbb{E}\kappa_k=\mathbb{E}T=k-1=2\,\mathbb{E}U_k$, and for integer-valued laws convex order is decided by the hinges at the integers $1,\dots,k-2$.
\end{proof}

The companion paper \cite{ChaudharyKhandelwalFacets} shows that the $2(k-2)$ inequalities $I(h_j)$ and $I(-h_j)$ are facets of $\Ph(k)$ and that they are the only facets given by convex or concave potentials. For lattice laws with small atoms, convex order alone decides membership. For $u:\{0,\dots,k-1\}\to\R$ let $\varphi$ be its \emph{convex minorant}: the largest convex function on $[0,k-1]$ with $\varphi(i)\le u(i)$ for all $i$. It is the lower boundary of the convex hull of the points $(i,u(i))$, so it is piecewise linear, its breakpoints are integers, and $\varphi=u$ at $0$, at $k-1$, and at every breakpoint. Call these the \emph{contact points} $0=a_0<a_1<\dots<a_m=k-1$; then $\varphi$ is affine on each $[a_{l-1},a_l]$ and $\varphi(a_l)=u(a_l)$.

\begin{lem}[Convex minorant]\label{lem:minorant}
Let $u:\{0,\dots,k-1\}\to\R$, let $\varphi$ be its convex minorant, and let $w=u-\varphi\ge0$ on $\{0,\dots,k-1\}$. Then
\[
\sum_{t=0}^{2k-2}\bigl(v(t)-2\varphi(t/2)\bigr)^+\ \le\ 2\sum_{i=0}^{k-1}w(i).
\]
\end{lem}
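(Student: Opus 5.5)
The plan is to bound each term $\bigl(v(t)-2\varphi(t/2)\bigr)^+$ by a single value $w(i)$, using one well-chosen split of $t$ in the definition of $v$. Then we count how often each $i$ is used. Since $u=\varphi+w$, every split $i+j=t$ gives
\[
v(t)\le u(i)+u(j)=\varphi(i)+\varphi(j)+w(i)+w(j).
\]
If $i$ and $j$ lie in a common segment $[a_{l-1},a_l]$ on which $\varphi$ is affine, then $\varphi(i)+\varphi(j)=2\varphi(t/2)$. For such splits,
\[
v(t)-2\varphi(t/2)\le w(i)+w(j).
\]
The right side is nonnegative, so it also bounds the positive part.

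Fix a segment $[a,b]=[a_{l-1},a_l]$ and take the integers $t$ with $2a\le t\le 2b$, that is, $t/2\in[a,b]$.
\begin{itemize}
\item For $2a\le t\le a+b$, use the split $(a,\,t-a)$. Both parts lie in $[a,b]$, and $w(a)=0$ because $a$ is a contact point, so the excess is at most $w(t-a)$.
\item For $a+b\le t\le 2b$, use the split $(t-b,\,b)$. Both parts lie in $[a,b]$, and $w(b)=0$, so the excess is at most $w(t-b)$.
\end{itemize}
As $t$ runs over the first range, $t-a$ runs over $\{a,\dots,b\}$ once. As $t$ runs over the second range, $t-b$ runs over $\{a,\dots,b\}$ once. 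Summed over all such $t$, the bound is therefore at most $2\sum_{i=a}^{b}w(i)$. At $t=a+b$, which lies in both ranges, we keep only one of the two bounds, so the count only goes down.

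Every $t\in\{0,\dots,2k-2\}$ has $t/2\in[0,k-1]=\bigcup_l[a_{l-1},a_l]$. So each $t$ is assigned to some segment; when $t/2$ is itself a contact point, assign it to just one of the two adjacent segments. Summing over segments gives
\[
\sum_t\bigl(v(t)-2\varphi(t/2)\bigr)^+\le 2\sum_l\sum_{i=a_{l-1}}^{a_l}w(i).
\]
Each non-contact $i$ lies in exactly one segment. A contact point can lie in two segments, but there $w=0$. Hence the right side equals $2\sum_{i=0}^{k-1}w(i)$.

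The only delicate point is this bookkeeping at the shared endpoints, for both $t$ and $i$, and it is resolved by $w(a_l)=0$. Otherwise the argument uses only the affineness of $\varphi$ on segments and the integrality of the contact points, which makes the chosen splits legitimate integer splits.
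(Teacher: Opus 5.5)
Your proof is correct and is essentially the paper's argument: on each affine piece $[a,b]$ you use the same splits, $(a,t-a)$ for $2a\le t\le a+b$ and $(t-b,b)$ for $a+b\le t\le 2b$, and the same count against $2\sum_{i=a}^{b}w(i)$, with $w(a_l)=0$ absorbing the double-counted contact points. The only difference is cosmetic. You assign each $t=2a_l$ to a single segment and rely on the bounds being nonnegative, whereas the paper counts $t=2a_l$ in both adjacent pieces and notes that the summand there is $(v(2a_l)-2u(a_l))^+=0$.
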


\begin{proof}
Fix a piece $[a,b]=[a_{l-1},a_l]$, on which $\varphi$ is affine, and recall $w(a)=w(b)=0$. If $2a\le t\le a+b$, then $a$ and $t-a$ lie in $[a,b]$ and are symmetric about $t/2$, so $\varphi(a)+\varphi(t-a)=2\varphi(t/2)$ and
\[
v(t)\le u(a)+u(t-a)=2\varphi(t/2)+w(a)+w(t-a)=2\varphi(t/2)+w(t-a).
\]
Likewise $v(t)\le2\varphi(t/2)+w(t-b)$ for $a+b\le t\le2b$. As $w\ge0$, summing over $t\in[2a,2b]$, with $t=a+b$ counted in both ranges, gives $\sum_{t=2a}^{2b}(v(t)-2\varphi(t/2))^+\le2\sum_{i=a}^{b}w(i)$. The intervals $[2a_{l-1},2a_l]$ cover $\{0,\dots,2k-2\}$; at a shared endpoint $t=2a_l$ the summand is $(v(2a_l)-2u(a_l))^+=0$, and $w$ vanishes at the contact points shared by adjacent pieces. Summing over $l$ gives the claim.
\end{proof}

\begin{cor}[Small atoms]\label{cor:smallatoms}
A mean-one law on $\{1/k,\dots,(2k-1)/k\}$ whose atoms are all at most $1/k$ belongs to $\DU$ if and only if $kS-1\le_{\mathrm{cx}}2U_k$. Equivalently, a vector $x$ with $0\le x(t)\le1$, $\sum_tx(t)=k$, and $\sum_tt\,x(t)=k(k-1)$ lies in $\Ph(k)$ if and only if the $k-2$ hinge inequalities $I(h_j)$ hold.
\end{cor}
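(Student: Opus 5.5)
Necessity holds in either formulation. Convex order is \cref{cor:readings}(i), or equivalently the inequalities $I(h_j)$ of \cref{prop:hinges}(i), and these are valid on $\Ph(k)$. For sufficiency I will work in count coordinates. By \cref{prop:C}, the law of $S$ gives $x(t)=k\,\mathbb{P}(T=t)$, which satisfies $0\le x\le1$ (atoms at most $1/k$) and lies on the affine hull (mass one, mean one). By \cref{prop:hinges}(i), the condition $kS-1\le_{\mathrm{cx}}2U_k$ is the family of $I(h_j)$. The two formulations are therefore the same statement, and I will prove the second one. By \cref{lem:gridduality} it suffices to show that $I(u)$ holds at $x$ for every $u:\{0,\dots,k-1\}\to\R$.

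Fix $u$, let $\varphi$ be its convex minorant, and put $w=u-\varphi\ge0$. I split
\[
\sum_t v(t)x(t)\le\sum_t 2\varphi(t/2)\,x(t)+\sum_t\bigl(v(t)-2\varphi(t/2)\bigr)^+x(t).
\]
Since $x(t)\le1$ and the positive parts are nonnegative, \cref{lem:minorant} bounds the second sum by $2\sum_iw(i)$. It then remains to show $\sum_t2\varphi(t/2)x(t)\le2\sum_i\varphi(i)$, because adding the two bounds gives $\sum_tv(t)x(t)\le2\sum_i(\varphi(i)+w(i))=2\sum_iu(i)$.

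For the remaining inequality, $\varphi$ is convex and piecewise linear on $[0,k-1]$ with integer breakpoints. Hence $\varphi(s)=\alpha+\beta s+\sum_{j}c_j(s-j)^+$ with $c_j\ge0$ and $j$ running over the interior integers $1,\dots,k-2$. I compute each piece separately. The affine part contributes equally to both sides because $x$ lies on the affine hull (\cref{lem:affhull}). For each hinge, $2\bigl(t/2-j\bigr)^+=(t-2j)^+$, so the hinge's contribution on the left is $c_j\sum_t(t-2j)^+x(t)$. On the right it is $2c_j\sum_i(i-j)^+=c_j\,k\,\mathbb{E}(2U_k-2j)^+$. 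The inequality $I(h_j)$ says exactly that the former is at most the latter, and summing over $j$ with nonnegative weights finishes the argument.

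The only delicate point I expect is bookkeeping: confirming that $\varphi$ evaluated at half-integers $t/2$ matches the hinge representation, which it does since the representation holds on all of $[0,k-1]$. I also need to check that the condition $x(t)\le1$ is used only to pass from the positive-part sum to \cref{lem:minorant}. Nothing else seems to require work, since the hard combinatorics is already contained in \cref{lem:minorant}.
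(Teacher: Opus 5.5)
Your proof is correct and follows the paper's argument. You reduce via \cref{lem:gridduality}, split $v(t)$ against $2\varphi(t/2)$ for the convex minorant, and bound the excess by \cref{lem:minorant} using $x(t)\le1$. For the remaining convex part, you expand $\varphi$ into an affine part plus hinges and apply each $I(h_j)$ directly, whereas the paper applies $T\le_{\mathrm{cx}}2U_k$ to the convex function $s\mapsto\varphi(s/2)$ in one step; these are the same fact by \cref{prop:hinges}(i).
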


\begin{proof}
By \cref{prop:C} the two statements are the same, with $T=kS-1$ and $x(t)=k\,\mathbb{P}(T=t)$, and by \cref{prop:hinges}(i) the hinge inequalities say $T\le_{\mathrm{cx}}2U_k$. \emph{Necessity.} If $T=I+J$ with $I,J\sim U_k$ and $f$ is convex, then $f(T)\le\frac12\bigl(f(2I)+f(2J)\bigr)$, so $\mathbb{E}f(T)\le\mathbb{E}f(2U_k)$. \emph{Sufficiency.} Let $T\le_{\mathrm{cx}}2U_k$ with $\mathbb{P}(T=t)\le1/k$ for every $t$. By \cref{lem:gridduality} it suffices to show $\mathbb{E}\,v(T)\le2\,\mathbb{E}\,u(U_k)$ for every $u$. Let $\varphi$ be the convex minorant of $u$ and $w=u-\varphi$. Then
\begin{align*}
\mathbb{E}\,v(T)
&\le 2\,\mathbb{E}\,\varphi(T/2)+\mathbb{E}\bigl(v(T)-2\varphi(T/2)\bigr)^+\\
&\le 2\,\mathbb{E}\,\varphi(U_k)+\frac1k\sum_{t=0}^{2k-2}\bigl(v(t)-2\varphi(t/2)\bigr)^+\\
&\le 2\,\mathbb{E}\,\varphi(U_k)+\frac2k\sum_{i=0}^{k-1}w(i)\ =\ 2\,\mathbb{E}\,u(U_k).
\end{align*}
The first line is $y\le y^+$. In the second, $s\mapsto\varphi(s/2)$ is convex, so $\mathbb{E}\varphi(T/2)\le\mathbb{E}\varphi(2U_k/2)$ by $T\le_{\mathrm{cx}}2U_k$, and $\mathbb{P}(T=t)\le1/k$ bounds the expectation of the nonnegative variable. The third line is \cref{lem:minorant}.
\end{proof}

\begin{rem}
The bound on the atoms enters only in the second line. Without it the same chain gives, for every $x\ge0$ on the affine hull with $T\le_{\mathrm{cx}}2U_k$ and every $u$,
\[
\sum_tv(t)x(t)-2\sum_iu(i)\ \le\ \sum_t\bigl(x(t)-1\bigr)^+\bigl(v(t)-2\varphi(t/2)\bigr)^+,
\]
so a violated inequality $I(u)$ must put its excess on anti-diagonals carrying more than one unit. Remark~S4.1 shows that the bound $1$ cannot be raised to $101/100$. For an equivalent statement through two families of linear inequalities, proved by a different route, see also \cite[Corollary~4.4]{LiLiuYao2026}. \Cref{cor:smallatoms} and \cref{lem:gridduality} are proved here without that reference, and \cref{thm:density} rests only on them.
\end{rem}
\section{Joint tail bounds}\label{sec:tails}

By \cref{thm:dual}, membership in $\DU$ is decided by the potential inequalities \eqref{eq:potential}. This section and the next ask which potentials are needed. Staircases, read as joint bounds on sums of tail probabilities, suffice for every law (\cref{thm:tails}); on a fixed grid they need repeated thresholds. The number of steps cannot be bounded in terms of the number of atoms: two- and three-point laws with small spacing need many steps (\cref{thm:twopoint,thm:threepoint}). \Cref{sec:density} treats laws with a bounded density. Some results and proofs here cite \cref{sec:windows}, which uses only \cref{thm:B} and \cref{cor:readings}(iii), so the order of the argument is not circular.

\subsection{Tail sums characterize \texorpdfstring{$\DU$}{the set}}
For thresholds $0<\eta_1\le\dots\le\eta_r\le1$, repetitions allowed, put $\eta_{r+1}=1$, let $u(x)=\#\{q:\eta_q\le x\}$ on $[0,1]$, and
\[
\xi_\ell=\max\bigl\{\eta_q+\eta_{q'}:\ q+q'=\ell+1,\ 1\le q,q'\le r+1\bigr\},\qquad 1\le\ell\le2r.
\]
We call $u$ the \emph{staircase} with these thresholds and $r$ its number of (unit) steps. The thresholds are nondecreasing in $\ell$: a pair attaining $\xi_\ell$ has an index $q\le r$, and raising it by one does not decrease the sum.

\begin{lem}[Staircases with repeated thresholds]\label{lem:tailsgen}
For $s\in[0,2]$, $(u\,\square\,u)(s)=\#\{\ell\le 2r:\ \xi_\ell\le s\}$, and $\int_0^1u=\sum_{q=1}^r(1-\eta_q)$. Consequently, for every law $F\in\DU$ and $S\sim F$,
\begin{equation}\label{eq:tailsgen}
\sum_{\ell=1}^{2r}\mathbb{P}(S\ge\xi_\ell)\ \le\ 2\sum_{q=1}^{r}(1-\eta_q).
\end{equation}
\end{lem}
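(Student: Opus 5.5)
The plan is to compute $u\,\square\,u$ through its sublevel sets, not pointwise. First, the integral is immediate: $u=\sum_{q=1}^r\ind{x\ge\eta_q}$, so $\int_0^1u=\sum_{q=1}^r(1-\eta_q)$. Then the tail inequality \eqref{eq:tailsgen} follows from the identity for $u\,\square\,u$ and \eqref{eq:potential}. Since $u$ is a step function with finitely many steps, $u\,\square\,u$ is Borel, as noted after \eqref{eq:potential}. We then have $\int(u\,\square\,u)\,dF=\sum_{\ell}\mathbb{P}(S\ge\xi_\ell)$, because $\#\{\ell:\xi_\ell\le S\}=\sum_\ell\ind{S\ge\xi_\ell}$. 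So everything rests on the identity $(u\,\square\,u)(s)=\#\{\ell\le2r:\xi_\ell\le s\}$.

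Both sides are integer-valued and lie in $\{0,\dots,2r\}$; on the left, $u\le r$. It therefore suffices to show, for each $1\le\ell\le2r$, that $(u\,\square\,u)(s)\le\ell-1$ if and only if $s<\xi_\ell$; the right side satisfies the same equivalence because the $\xi_\ell$ are nondecreasing. Since $u$ takes integer values, $(u\,\square\,u)(s)\le\ell-1$ holds exactly when there is a split $s=x+y$ with $x,y\in[0,1]$ and $u(x)+u(y)\le\ell-1$. Because $u$ is nondecreasing, this is the same as $u(x)\le a$ and $u(y)\le b$ for some integers $a,b\ge0$ with $a+b=\ell-1$.

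The sublevel sets are explicit. For $0\le a\le r-1$ we have $\{x\in[0,1]:u(x)\le a\}=[0,\eta_{a+1})$, since $u(x)\ge a+1$ exactly when $x\ge\eta_{a+1}$. For $a\ge r$ the sublevel set is all of $[0,1]$, which matches the convention $\eta_{r+1}=1$ except that the interval is closed. Write $q=a+1$ and $q'=b+1$, so that $q+q'=\ell+1$. Capping $a,b$ at $r$ costs nothing, because the sublevel set for $a\ge r$ is already all of $[0,1]$; hence $1\le q,q'\le r+1$. The set of achievable sums is then the Minkowski sum of the two sublevel sets.

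If at least one of $q,q'$ is at most $r$, the Minkowski sum of an interval $[0,\eta_q)$ with $[0,\eta_{q'})$ or with $[0,1]$ is the half-open interval $[0,\eta_q+\eta_{q'})$. The case $q=q'=r+1$ needs $\ell=2r+1$, which is outside the range $\ell\le2r$. Taking the union over admissible pairs $(q,q')$ gives $\{s:(u\,\square\,u)(s)\le\ell-1\}=[0,\xi_\ell)$ intersected with $[0,2]$, as required.

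The only delicate point is this bookkeeping at the top index: the closedness of the sublevel set $[0,1]$ versus the half-open sets $[0,\eta_q)$, and the reason the convention $\eta_{r+1}=1$ gives exactly the right half-open endpoints. Repeated thresholds cause no trouble, since equal $\eta$'s just make some sublevel sets coincide or be empty. That is where I would check carefully, including the case $\eta_r=1$: there the sublevel set $[0,\eta_r)=[0,1)$ differs from $[0,1]$ only at the point $1$, and the union still gives $[0,\xi_\ell)$.
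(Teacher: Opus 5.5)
Your proof is correct and follows the paper's argument essentially step for step: the sublevel sets $\{u\le i\}=[0,\eta_{i+1})$ for $i<r$ and $[0,1]$ for $i=r$, the reduction to index pairs with $i+j=\ell-1$ capped at $r$ (the paper does the capping explicitly by replacing $(i,j)$ with $(\ell-1-r,r)$), Minkowski sums of these intervals, and the union giving $[0,\xi_\ell)$. One small correction: no sublevel set is ever empty, because $u(0)=0$ when $\eta_1>0$, and the formula $[0,\eta_q)+[0,\eta_{q'})=[0,\eta_q+\eta_{q'})$ relies on exactly this positivity of the thresholds, so you should cite it at that step instead of remarking that sublevel sets may be empty.
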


\begin{proof}
For $0\le i\le r$ let $A_i=\{x\in[0,1]:u(x)\le i\}$. Since the thresholds are sorted, $u(x)\ge i+1$ if and only if $\eta_{i+1}\le x$, so $A_i=[0,\eta_{i+1})$ for $i<r$, and $A_r=[0,1]$. Let $1\le\ell\le2r$. We claim that $(u\,\square\,u)(s)\le\ell-1$ if and only if $s\in A_i+A_j$ for some $i,j\ge0$ with $i+j=\ell-1$ and $i,j\le r$. If $s=x+y$ with $u(x)+u(y)\le\ell-1$, put $i=u(x)\le r$ and $j=\ell-1-i\ge u(y)$; if $j>r$, replace $(i,j)$ by $(\ell-1-r,r)$, which keeps $x\in A_{\ell-1-r}\supseteq A_i$ and $y\in A_r$. The converse is clear. Because every threshold is positive, $A_i+A_j=[0,\eta_{i+1}+\eta_{j+1})$ for $i,j<r$ and $A_r+A_j=[0,1+\eta_{j+1})$ for $j<r$; the case $i=j=r$ does not occur because $\ell\le 2r$. So $(u\,\square\,u)(s)\le\ell-1$ if and only if $s<\xi_\ell$. Since $u\le r$, $u\,\square\,u\le 2r$ on $[0,2]$, which gives the formula, and $u\,\square\,u$ is a right-continuous step function, hence Borel. The integral is immediate, and \eqref{eq:tailsgen} is \eqref{eq:potential} for $u$.
\end{proof}

A threshold equal to $1$ adds nothing to the right side of \eqref{eq:tailsgen}. The single step ($r=1$) is \eqref{eq:onestep}, with $\xi_1=2\eta_1$ and $\xi_2=1+\eta_1$. Read this way, a staircase inequality bounds the expected number of thresholds $\xi_1\le\dots\le\xi_{2r}$ that $S$ reaches.

\begin{thm}[Tail sums characterize $\DU$]\label{thm:tails}
Let $F$ be a law on $[0,2]$ and $S\sim F$. The following are equivalent.
\begin{enumerate}[label=(\roman*)]
\item $F\in\DU$.
\item $\mathbb{E}S=1$, and \eqref{eq:tailsgen} holds for every $r\ge1$ and all thresholds $0<\eta_1\le\dots\le\eta_r\le1$.
\item $\mathbb{E}S=1$, and \eqref{eq:tailsgen} holds for every $r\ge1$ and all thresholds $0<\eta_1<\dots<\eta_r<1$.
\end{enumerate}
\end{thm}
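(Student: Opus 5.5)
The chain of implications is (i)$\Rightarrow$(ii)$\Rightarrow$(iii)$\Rightarrow$(ii)$\Rightarrow$(i).

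\emph{(i)$\Rightarrow$(ii)$\Rightarrow$(iii).} If $F\in\DU$, then $\mathbb{E}S=\mathbb{E}X+\mathbb{E}Y=1$, and \eqref{eq:tailsgen} is \cref{lem:tailsgen}. The step from (ii) to (iii) is trivial.

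\emph{(iii)$\Rightarrow$(ii).} Fix thresholds $0<\eta_1\le\dots\le\eta_r\le1$, possibly repeated and possibly equal to $1$. For small $\varepsilon>0$ set $\eta_q^\varepsilon=\eta_q-q\varepsilon$. These thresholds are distinct, lie in $(0,1)$, and increase strictly in $q$. The new thresholds are $\xi_\ell^\varepsilon$. Each pairwise sum $\eta^\varepsilon_q+\eta^\varepsilon_{q'}$ is strictly smaller than $\eta_q+\eta_{q'}$, including pairs that use $\eta_{r+1}=1$, because at least one index is at most $r$. Since $\xi_\ell$ is a maximum of finitely many such sums, $\xi_\ell^\varepsilon<\xi_\ell$ and $\xi^\varepsilon_\ell\uparrow\xi_\ell$ as $\varepsilon\downarrow0$. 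Then $\mathbb{P}(S\ge\xi^\varepsilon_\ell)\downarrow\mathbb{P}(S\ge\xi_\ell)$ by continuity from above on the closed sets $[\xi^\varepsilon_\ell,2]$. The right-hand side of \eqref{eq:tailsgen} is continuous in the thresholds. Letting $\varepsilon\to0$ in (iii) therefore gives (ii). Perturbing downward is essential here: an upward perturbation would only yield $\mathbb{P}(S>\xi_\ell)$ in the limit.

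\emph{(ii)$\Rightarrow$(i).} I would use \cref{thm:dual}, so it suffices to prove \eqref{eq:potential} for every continuous $u$ on $[0,1]$. The proof reduces $u$ in three steps.

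\emph{Reduction to Lipschitz $u$.} If $\|u-u'\|_\infty\le\varepsilon$, then $\|u\,\square\,u-u'\,\square\,u'\|_\infty\le2\varepsilon$. Both sides of \eqref{eq:potential} therefore move by at most $2\varepsilon$, and Lipschitz functions are uniformly dense.

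\emph{Reduction to nondecreasing $u$.} If $u$ is Lipschitz with constant $L$, then $w=u+Lx$ is nondecreasing and $(w\,\square\,w)(s)=(u\,\square\,u)(s)+Ls$. The added terms on the two sides are $L\,\mathbb{E}S=L$ and $2L\int_0^1x\,dx=L$, which cancel because $\mathbb{E}S=1$. So \eqref{eq:potential} for $w$ is equivalent to it for $u$. Adding a constant to $w$ also changes both sides equally, so we may assume $w(0)=0$.

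\emph{Reduction to staircases.} Let $w\ge0$ be continuous and nondecreasing, fix $N\ge1$, and put $w_N=\lfloor Nw\rfloor/N$. Then $w_N\le w\le w_N+1/N$, so $w_N\,\square\,w_N\ge w\,\square\,w-2/N$ and $\int w_N\le\int w$. Also $Nw_N$ is an integer-valued, nondecreasing, right-continuous step function with finitely many jumps, all in $(0,1]$, and $Nw_N(0)=0$. It is therefore exactly a staircase $\#\{q:\eta_q\le x\}$, with a threshold of multiplicity $m$ at each jump of height $m$. Hypothesis (ii), through \cref{lem:tailsgen} (the left side of \eqref{eq:tailsgen} is $\int(u\,\square\,u)\,dF$), gives \eqref{eq:potential} for $w_N$. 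Hence
\[
\int w\,\square\,w\,dF\le\int w_N\,\square\,w_N\,dF+\frac2N\le2\int w_N+\frac2N\le2\int w+\frac2N,
\]
and letting $N\to\infty$ gives \eqref{eq:potential} for $w$, and hence for $u$.

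The main point needing care is this last reduction. One must check that the floor of a continuous nondecreasing function is a staircase in exactly the form used in \cref{lem:tailsgen}: it must be right-continuous, because at a jump the point belongs to the upper level, and its jumps may have multiplicity greater than one. This is precisely why repeated thresholds, and thresholds equal to $1$, are needed in (ii) before they are removed through (iii).
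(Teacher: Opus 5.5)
Your proof follows the paper's own route. The chain of implications is the same. The downward perturbation of the thresholds for (iii)$\Rightarrow$(ii) is the same, and it relies on $\mathbb{P}(S\ge x)$ being left-continuous in $x$. For (ii)$\Rightarrow$(i) you use the same three reductions: uniform approximation, an affine change that uses $\mathbb{E}S=1$, and the quantization $\lfloor Nw\rfloor/N$.

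\textbf{The slip in (iii)$\Rightarrow$(ii).} With $\eta^\varepsilon_q=\eta_q-q\varepsilon$ you get
\[
\eta^\varepsilon_{q+1}-\eta^\varepsilon_q=(\eta_{q+1}-\eta_q)-\varepsilon .
\]
This equals $-\varepsilon<0$ exactly when a threshold is repeated, which is the case the step exists for. So the perturbed list decreases within each block of equal thresholds, and your claim that it increases strictly in $q$ is false. This matters because (iii), the formula for $\xi_\ell$, and the identification of $u\,\square\,u$ in \cref{lem:tailsgen} all assume sorted thresholds. There are two easy fixes:
\begin{itemize}
\item Reverse the weights to $\eta^\varepsilon_q=\eta_q-(r+1-q)\varepsilon$, as the paper does, with $\varepsilon<\eta_1/r$. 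Consecutive gaps are then at least $\varepsilon$.
\item Alternatively, sort your perturbed values. Since every perturbed value is strictly below its unperturbed counterpart, the $q$-th smallest of them is strictly below $\eta_q$, and it increases to $\eta_q$ as $\varepsilon\downarrow0$. That is all the limit argument needs.
\end{itemize}

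\textbf{The closing remark.} It misidentifies why (ii) allows more than (iii). Your $w$ is continuous with $w(0)=0$, so each $\eta_m=\min\{x:w(x)\ge m/N\}$ satisfies $w(\eta_m)=m/N$. Hence distinct $m$ give distinct $\eta_m$, and $\lfloor Nw\rfloor$ has only unit jumps. The quantized staircase therefore never needs repeated thresholds. The only feature outside the scope of (iii) that it can produce is a threshold equal to $1$, when $Nw(1)$ is an integer first reached at $x=1$. Repeated thresholds matter only on a fixed grid, as in \cref{rem:multiplicity} and \cref{ex:repeated}, not in this continuum argument. Your proof is unaffected, since it invokes (ii) in full generality.
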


\begin{proof}
(i)$\Rightarrow$(ii) is \cref{lem:tailsgen}, and every law in $\DU$ has mean one. (ii)$\Rightarrow$(iii) is trivial.

(iii)$\Rightarrow$(ii). Let $0<\eta_1\le\dots\le\eta_r\le1$, put $c=\eta_1/(r+1)$ and $\eta^{(n)}_q=\eta_q-(r+1-q)c/n$ for $n\ge1$. Then $\eta^{(n)}_1\ge\eta_1-rc>0$, $\eta^{(n)}_{q+1}-\eta^{(n)}_q\ge c/n>0$ and $\eta^{(n)}_r\le 1-c/n<1$, so (iii) applies to $\eta^{(n)}$. Each $\eta^{(n)}_q$ increases to $\eta_q$ as $n\to\infty$, and $\xi_\ell$ is a maximum of sums of thresholds, so the corresponding thresholds $\xi^{(n)}_\ell$ increase to $\xi_\ell$. The events $\{S\ge\xi^{(n)}_\ell\}$ decrease, with intersection $\{S\ge\xi_\ell\}$, so $\mathbb{P}(S\ge\xi_\ell^{(n)})\to\mathbb{P}(S\ge\xi_\ell)$, while the right side of \eqref{eq:tailsgen} converges. So \eqref{eq:tailsgen} holds for $\eta$.

(ii)$\Rightarrow$(i). By \cref{thm:dual} it suffices to prove \eqref{eq:potential} for every continuous $u$ on $[0,1]$. We reduce to staircases in three steps.

\emph{Affine changes.} If $w(x)=u(x)+\alpha+\beta x$, then $(w\,\square\,w)(s)=(u\,\square\,u)(s)+2\alpha+\beta s$, so the left side of \eqref{eq:potential} changes by $2\alpha+\beta\,\mathbb{E}S=2\alpha+\beta$, and so does the right side, $2\int w=2\int u+2\alpha+\beta$. So \eqref{eq:potential} holds for $u$ if and only if it holds for $w$.

\emph{Uniform limits.} If $\|u-w\|_\infty\le\varepsilon$, then $u(x)+u(y)\le w(x)+w(y)+2\varepsilon$ for all $x,y$, so $|(u\,\square\,u)-(w\,\square\,w)|\le2\varepsilon$ on $[0,2]$, and $|2\int u-2\int w|\le2\varepsilon$. So if \eqref{eq:potential} holds for $w$, its two sides for $u$ differ from those for $w$ by at most $2\varepsilon$ each, and \eqref{eq:potential} holds for $u$ up to $4\varepsilon$. By the Weierstrass theorem a continuous $u$ is a uniform limit of polynomials, which are Lipschitz, so it suffices to treat Lipschitz $u$. If $u$ is Lipschitz with constant $\beta$, then $u(x)+\beta x-u(0)$ is nondecreasing, so by the affine step it suffices to treat continuous nondecreasing $w$ with $w(0)=0$.

\emph{Staircases.} For such $w$ and an integer $N\ge1$ let $R=\lfloor Nw(1)\rfloor$ and, for $1\le m\le R$, $\eta_m=\min\{x\in[0,1]:w(x)\ge m/N\}$. By continuity $w(\eta_m)=m/N$, and $\eta_m>0$ because $w(0)=0$; hence $0<\eta_1<\dots<\eta_R\le1$. Since $w$ is nondecreasing, $\{x:w(x)\ge m/N\}=[\eta_m,1]$, so
\[
\lfloor Nw(x)\rfloor=\#\{m\ge1:\ w(x)\ge m/N\}=\#\{m\le R:\ \eta_m\le x\}
\]
is the staircase with thresholds $\eta_1,\dots,\eta_R$ (zero if $R=0$). By (ii) and \cref{lem:tailsgen}, \eqref{eq:potential} holds for it, hence for $w_N=\lfloor Nw\rfloor/N$, since $(cv)\,\square\,(cv)=c\,(v\,\square\,v)$ for $c>0$. As $\|w-w_N\|_\infty\le1/N$, \eqref{eq:potential} holds for $w$ up to $4/N$ for every $N$, hence exactly.
\end{proof}

For three or more uniforms convex order decides membership \cite{MaoWangWang2019}; \cref{thm:tails} says that for two uniforms the joint tail bounds \eqref{eq:tailsgen} take its place. The convex-order condition is implied by them, and \cref{ex:twopoint} shows that it is strictly weaker. In the continuum distinct thresholds suffice, by (iii). The staircase form is also the practical one: to show that a given law is not attainable, it is enough to exhibit a few thresholds and compare two numbers, which is all that \cref{ex:worked} below does.

\begin{rem}[Not every bound is sharp]\label{rem:notsharp}
Not every inequality \eqref{eq:tailsgen} is attained on $\DU$. The thresholds $(\frac15,\frac25,\frac12,\frac45)$ and $(\frac15,\frac25,\frac35,\frac45)$ give the same $\xi_1,\dots,\xi_8$, namely $(\frac25,\frac35,\frac45,1,\frac65,\frac75,\frac85,\frac95)$, with right sides $\frac{21}5$ and $4$, so the first bound is never attained. Single-step bounds are attained for every $\eta$ (\cref{cor:readings}(iii)), and on a grid every bound that defines a facet of $\Ph(k)$ is attained, because a facet is a nonempty face.
\end{rem}

The following example shows the theorem at work on a law with a density, for which no lattice argument applies.

\begin{ex}\label{ex:worked}
Let $S$ have density $4$ on the four intervals $[\frac7{20},\frac{33}{80}]$, $[\frac{61}{80},\frac{33}{40}]$, $[\frac{47}{40},\frac{99}{80}]$, and $[\frac{127}{80},\frac{33}{20}]$, each of length $\frac1{16}$ and mass $\frac14$. Then $\mathbb{E}S=1$ and $S\le_{\mathrm{cx}}2U$; this is the law of \cref{thm:equal} with $r=2$ and $c=4$. The folded condition of \cref{cor:readings}(ii) holds, and so does every single-step bound \eqref{eq:onestep}: for $0<\eta\le\frac{33}{40}$ the left side of \eqref{eq:onestep} falls short of the right side by at least $\frac3{40}$, with equality at $\eta=\frac{47}{80}$, and for $\eta>\frac{33}{40}$ the left side is $0$. None of the three readings of \cref{cor:readings} excludes $S$. The two-step staircase with thresholds $(\frac7{40},\frac{47}{80})$ has $(\xi_1,\dots,\xi_4)=(\frac7{20},\frac{61}{80},\frac{47}{40},\frac{127}{80})$, the left ends of the four intervals, so the left side of \eqref{eq:tailsgen} is $1+\frac34+\frac12+\frac14=\frac52$, while the right side is $2(\frac{33}{40}+\frac{33}{80})=\frac{99}{40}$. The violation is $\frac1{40}$, and $\law(S)\notin\DU$.
\end{ex}

\Cref{thm:tails} extends to two copies of any law on a compact interval. For a law $F$ write $\cD(F,F)$ as in \cref{sec:setup}.

\begin{prop}[General marginals]\label{prop:generalF}
Let $F$ be a law on $[a,b]$, $a<b$, $X\sim F$, and let $G$ be a law on $[2a,2b]$, $S\sim G$. For thresholds $a<\eta_1\le\dots\le\eta_r\le b$ put $\eta_{r+1}=b$ and $\xi_\ell=\max\{\eta_q+\eta_{q'}:q+q'=\ell+1,\ 1\le q,q'\le r+1\}$. Then $G\in\cD(F,F)$ if and only if $\mathbb{E}S=2\,\mathbb{E}X$ and
\[
\sum_{\ell=1}^{2r}\mathbb{P}(S\ge\xi_\ell)\ \le\ 2\sum_{q=1}^{r}\mathbb{P}(X\ge\eta_q)
\]
for every $r\ge1$ and all such thresholds; it suffices to take $a<\eta_1<\dots<\eta_r<b$.
\end{prop}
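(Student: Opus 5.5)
The plan is to repeat the proof of \cref{thm:tails}, with the uniform law replaced by $F$ throughout. There are three ingredients: a one-potential duality for $\cD(F,F)$, the staircase computation of \cref{lem:tailsgen} on $[a,b]$, and the reduction of continuous potentials to staircases.

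\emph{Duality.} The first step is to prove the analogue of \cref{thm:dual}. A law $G$ on $[2a,2b]$ belongs to $\cD(F,F)$ if and only if $\int(u\,\square\,u)\,dG\le2\int u\,dF$ for every continuous $u$ on $[a,b]$, where $u\,\square\,u$ is the infimal convolution over splits $x+y=s$ with $x,y\in[a,b]$. Necessity is pointwise, as before. For sufficiency, $\cD(F,F)$ is convex and weakly compact (\cref{sec:setup}), so Hahn--Banach gives a separating $h\in C[2a,2b]$. Kantorovich duality on the compact square $[a,b]^2$ then gives $\alpha,\beta$ with $\alpha(x)+\beta(y)\ge h(x+y)$ and $\int\alpha\,dF+\int\beta\,dF$ close to the optimum. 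Because both marginals equal $F$, setting $u=(\alpha+\beta)/2$ symmetrizes exactly as in \cref{thm:dual}.

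\emph{Staircases.} For thresholds $a<\eta_1\le\dots\le\eta_r\le b$, put $u=\#\{q:\eta_q\le x\}$ on $[a,b]$. The sublevel sets are $A_i=[a,\eta_{i+1})$ for $i<r$ and $A_r=[a,b]$. Their Minkowski sums are $A_i+A_j=[2a,\eta_{i+1}+\eta_{j+1})$, which is nonempty since $\eta_{i+1}>a$, and $A_r+A_j=[2a,b+\eta_{j+1})$. The argument of \cref{lem:tailsgen} then goes through word for word with $\eta_{r+1}=b$, and gives $(u\,\square\,u)(s)=\#\{\ell:\xi_\ell\le s\}$. On the right side, $\int u\,dF=\sum_q\mathbb{P}(X\ge\eta_q)$ replaces $\sum_q(1-\eta_q)$. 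This proves necessity, and the mean condition comes from the affine potential $u(x)=x$.

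\emph{Sufficiency.} This follows the proof of \cref{thm:tails}, (ii)$\Rightarrow$(i). An affine change $u+\alpha+\beta x$ shifts the left side by $2\alpha+\beta\,\mathbb{E}S$ and the right side by $2\alpha+2\beta\,\mathbb{E}X$, and these agree exactly when $\mathbb{E}S=2\,\mathbb{E}X$. Uniform approximation costs at most $4\varepsilon$, as before, using only that $F$ and $G$ are probability measures. Polynomials are Lipschitz, so we reduce to continuous nondecreasing $w$ with $w(a)=0$. The discretization $\lfloor Nw\rfloor/N$ is then a staircase with thresholds $\eta_m=\min\{x:w(x)\ge m/N\}$, which lie in $(a,b]$ and are distinct.

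\emph{Main obstacle.} The only step that is not routine is the final reduction from thresholds allowed to equal $b$ (with repetitions) to distinct thresholds strictly below $b$. In \cref{thm:tails} this uses approximating thresholds $\eta^{(n)}_q$ increasing to $\eta_q$. The left side converges by continuity from above of the events $\{S\ge\xi\}$, and that still works here. On the right side, however, $\mathbb{P}(X\ge\eta^{(n)}_q)\downarrow\mathbb{P}(X\ge\eta_q)$ as well, again because $\{X\ge\eta\}$ is continuous along increasing thresholds. So both sides converge, and no continuity assumption on $F$ is needed, since the approach is from below.

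It remains to check that the perturbation $\eta^{(n)}_q=\eta_q-(r+1-q)c/n$ with $c=(\eta_1-a)/(r+1)$ keeps every $\eta^{(n)}_q$ in $(a,b)$ and makes them strictly increasing. Monotonicity of $\xi_\ell$ in the thresholds then carries the limit through. A threshold equal to $b$ needs separate care: its term $\mathbb{P}(X\ge b)$ on the right may be positive. The approximation from below handles it, because $\{X\ge\eta^{(n)}\}\downarrow\{X\ge b\}$.
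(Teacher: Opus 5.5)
Your proposal is correct and follows the paper's proof essentially step for step. It uses the same one-potential Kantorovich duality, obtained by separating $G$ from the convex weakly compact set $\cD(F,F)$ and symmetrizing $u=(\alpha+\beta)/2$, the same staircase computation with $A_i=[a,\eta_{i+1})$ and $\mathbb{E}u(X)=\sum_q\mathbb{P}(X\ge\eta_q)$, the same affine, Weierstrass and vertical-quantization reductions, and the same final approximation of thresholds from below, in which a threshold at $b$ keeps its possibly positive contribution $\mathbb{P}(X=b)$.
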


\begin{proof}
The proof of \cref{thm:tails} carries over, with $[0,1]$ replaced by $[a,b]$, $\int u$ by $\mathbb{E}u(X)$, and the mean condition by $\mathbb{E}S=2\,\mathbb{E}X$. \emph{Duality.} The proof of \cref{thm:dual} uses only compactness and the equality of the two marginals: separating $G$ from the convex weakly compact set $\cD(F,F)$ gives $h\in C[2a,2b]$, Kantorovich duality on $[a,b]^2$ gives continuous $\alpha,\beta$ with $\alpha(x)+\beta(y)\ge h(x+y)$, and $u=(\alpha+\beta)/2$ satisfies $h\le u\,\square\,u$ and $2\,\mathbb{E}u(X)=\mathbb{E}\alpha(X)+\mathbb{E}\beta(X)$, with $u\,\square\,u$ now an infimum over $[a,b]$. So $G\in\cD(F,F)$ if and only if $\mathbb{E}(u\,\square\,u)(S)\le2\,\mathbb{E}u(X)$ for every continuous $u$ on $[a,b]$. \emph{Staircases.} For $u(x)=\#\{q:\eta_q\le x\}$ on $[a,b]$, the proof of \cref{lem:tailsgen} with $A_i=[a,\eta_{i+1})$ for $i<r$ and $A_r=[a,b]$ gives $(u\,\square\,u)(s)=\#\{\ell\le2r:\xi_\ell\le s\}$, and $\mathbb{E}u(X)=\sum_q\mathbb{P}(X\ge\eta_q)$; this gives necessity. \emph{Sufficiency.} Adding $\alpha+\beta x$ to $u$ changes the left side by $2\alpha+\beta\,\mathbb{E}S$ and the right side by $2\alpha+2\beta\,\mathbb{E}X$, so the mean condition makes the inequality invariant under affine changes. The Weierstrass approximation, the reduction to nondecreasing $w$ with $w(a)=0$, and the vertical quantization are unchanged, and give thresholds $a<\eta_1\le\dots\le\eta_R\le b$. Finally, approximating thresholds from below as in (iii)$\Rightarrow$(ii) of \cref{thm:tails} makes both sides converge, because the events $\{X\ge\eta^{(n)}_q\}$ and $\{S\ge\xi^{(n)}_\ell\}$ decrease to $\{X\ge\eta_q\}$ and $\{S\ge\xi_\ell\}$; a threshold equal to $b$ contributes $\mathbb{P}(X=b)$, which need not vanish.
\end{proof}

For $F=U[0,1]$ the right side is $2\sum_q(1-\eta_q)$ and \cref{prop:generalF} is \cref{thm:tails}. On a fixed grid distinct thresholds do not suffice, as the following remark and example show.

\begin{rem}[Grid bounds and repeated thresholds]\label{rem:multiplicity}
Let $u$ be a potential on $\{0,\dots,k-1\}$ whose increments $d(j)=u(j)-u(j-1)$, $1\le j\le k-1$, are nonnegative integers, and let $u(0)=0$. Then $u(i)=\sum_jd(j)\,\ind{i\ge j}$ is the grid form of the staircase whose threshold $j/k$ has multiplicity $d(j)$: the right-continuous staircase of \cref{lem:tailsgen} with these thresholds is the step function $\tilde u$ of \cref{sec:gridineq}, so for a law on $\{1/k,\dots,(2k-1)/k\}$, \eqref{eq:tailsgen} is $I(u)$ divided by $k$. Every rational potential takes this form after adding an affine function and multiplying by a positive constant, and by \cref{lem:affhull} neither operation changes $I(u)$ on mean-one laws. The finitely many potentials of \cite[Proposition~3.4]{LiLiuYao2026} (\cref{sec:gridineq}) are rational, so by \cref{prop:C} the laws in $\DU$ on $(1/k)\Z$, which charge neither $0$ nor $2$ (\cref{prop:C}), are the laws on $\{1/k,\dots,(2k-1)/k\}$ described by nonnegativity, the mean, and the inequalities \eqref{eq:tailsgen} with thresholds in $\{1/k,\dots,(k-1)/k\}$, repetitions allowed; the facets of $\Ph(k)$ other than $x(t)\ge0$ form the minimal such list. Repetitions are needed (\cref{ex:repeated}); how many facets need them is studied in the companion paper \cite{ChaudharyKhandelwalFacets}.
\end{rem}

\begin{ex}\label{ex:repeated}
Let $F_W$ have masses $(1/12,1/3,1/6,1/6,1/4)$ on $\{1/3,1/2,1,4/3,5/3\}$; its mean is $1$.
\begin{enumerate}[label=(\alph*)]
\item For the thresholds $(1/6,1/3,2/3,2/3)$, $(\xi_1,\dots,\xi_8)=(1/3,1/2,5/6,1,4/3,4/3,5/3,5/3)$. The left side of \eqref{eq:tailsgen} is $\frac{12+11+7+7+2\cdot5+2\cdot3}{12}=\frac{53}{12}$, the right side is $2\bigl(\frac56+\frac23+\frac13+\frac13\bigr)=\frac{52}{12}$, so $F_W\notin\DU$. On the grid $k=6$ this inequality is $I(u)$ for $u=(0,1,2,2,4,4)$, and it defines a facet of $\Ph(6)$: the 25 permutations $\sigma$ with $u(i)+u(\sigma(i))=v(i+\sigma(i))$ for all $i$ have X-rays on which $I(u)$ is an equality, and they span an affine space of dimension $8=\dim\Ph(6)-1$ (checked exactly, Section~S5).
\item $F_W$ satisfies $I(u_J)$ for every step set $J\subseteq\{1,\dots,K-1\}$ and $K\in\{6,12,18,24\}$, that is, \eqref{eq:tailsgen} for all distinct thresholds in $\{1/K,\dots,(K-1)/K\}$. This is a finite check over all $2^{K-1}$ step sets (Section~S5).
\item The distinct thresholds $(1/6,1/3,23/36,2/3)$, that is, $I(u_J)$ with $J=\{6,12,23,24\}$ on the grid $K=36$, exclude $F_W$, with violation $1/36$.
\end{enumerate}
On the grids of spacing $1/6$, $1/12$, $1/18$, and $1/24$, then, distinct thresholds do not exclude $F_W$, while one repeated threshold on the coarsest of them does. Part (c) is an instance of the limit in the proof of \cref{thm:tails}: lowering one copy of the repeated threshold $2/3$ by $\delta$ does not decrease the left side of \eqref{eq:tailsgen} and raises the right side by $2\delta$, so with $\delta=1/36$ the violation is at least $1/12-1/18=1/36$. How fine a grid of distinct thresholds is needed to exclude a general lattice law outside $\DU$ is open (\cref{sec:conclusion}).
\end{ex}

\subsection{Two- and three-point laws need many steps}
A \emph{potential with $r$ unit steps} is $u$ or $-u$, where $u$ is a staircase with $r$ thresholds in $(0,1]$, counted with multiplicity; we say it \emph{excludes} a law if the law violates \eqref{eq:potential} for it. The sign $-$ reduces to the sign $+$ by reflection. Write $\tilde F$ for the law of $2-S$, $S\sim F$; reflection preserves $\DU$.

\begin{lem}[Reflection]\label{lem:reflectsteps}
If $-u$, with $u$ a staircase with $r$ thresholds in $(0,1]$, excludes $F$, then some staircase with at most $r$ thresholds in $(0,1)$ excludes $\tilde F$.
\end{lem}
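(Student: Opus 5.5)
The plan is to transport the violated inequality for $-u$ on $F$ to an inequality for a reflected potential on $\tilde F$. The reflected potential will be a staircase with left-open steps, which we then approximate by genuine staircases whose thresholds are pushed slightly to the right. Write $0<\eta_1\le\dots\le\eta_r\le1$ for the thresholds of $u$, and let $S\sim F$, so that $2-S\sim\tilde F$.

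\emph{Step 1: reflection.} Put $w(x)=-u(1-x)$ on $[0,1]$. The substitution $x\mapsto1-x$ maps splits $s=x+y$ onto splits $2-s=(1-x)+(1-y)$ inside $[0,1]^2$. Hence $(w\,\square\,w)(2-s)=((-u)\,\square\,(-u))(s)$, and Lebesgue measure gives $\int w=\int(-u)$. So the excess $\mathbb{E}(w\,\square\,w)(2-S)-2\int w$ of \eqref{eq:potential} for $w$ at $\tilde F$ equals the excess for $-u$ at $F$, which is positive.

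Adding the constant $r$ changes both sides of \eqref{eq:potential} by $2r$ (no mean condition is needed for constants). So $w+r$ also excludes $\tilde F$. Explicitly,
\[
w(x)+r=\#\{q:\ x>\theta_q\},\qquad \theta_q=1-\eta_q\in[0,1).
\]
This is a staircase with $r$ steps, except that each step is open at its threshold, and a threshold may equal $0$ when $\eta_q=1$.

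\emph{Step 2: inf-convolution of the open staircase.} We repeat the proof of \cref{lem:tailsgen}, with $\theta$'s sorted increasingly and sublevel sets $A_i=[0,\theta_{i+1}]$, now closed, for $i<r$, and $A_r=[0,1]$. The same argument shows $(w+r)\,\square\,(w+r)\le\ell-1$ if and only if $s\le\xi'_\ell$, where $\xi'_\ell$ is built from the $\theta$'s exactly as $\xi_\ell$ is built from the $\eta$'s. Thus the left side for $w+r$ at $\tilde F$ is $\sum_\ell\mathbb{P}(2-S>\xi'_\ell)$, and the right side is $2\sum_q(1-\theta_q)$.

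\emph{Step 3: closing the steps.} For small $\delta>0$, take the genuine staircase with thresholds $\theta_q+\delta$. Since $\theta_q<1$, these lie in $(0,1)$ for $\delta$ small, and a threshold $\theta_q=0$ becomes $\delta>0$. Its $\xi$'s are $\xi'_\ell+2\delta$. By \cref{lem:tailsgen}, its inequality reads
\[
\sum_\ell\mathbb{P}(2-S\ge\xi'_\ell+2\delta)\le2\sum_q(1-\theta_q-\delta).
\]
As $\delta\downarrow0$, the events $\{2-S\ge\xi'_\ell+2\delta\}$ increase to $\{2-S>\xi'_\ell\}$, so the left side converges to the open-step left side. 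The right side converges to $2\sum_q(1-\theta_q)$. The excess therefore tends to the positive excess of Step 1, so for some small $\delta$ this staircase, with exactly $r$ thresholds in $(0,1)$, excludes $\tilde F$.

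The main obstacle is Step 2: checking that the open-step version of \cref{lem:tailsgen} really produces the strict tail events with the same $\xi'$'s. In particular, the boundary cases need care: thresholds $\theta_q=0$, and the index $A_r=[0,1]$ paired with closed intervals. Everything else is bookkeeping.
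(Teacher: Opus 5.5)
Your proof is correct, but it takes a longer route than the paper's. Both arguments begin with the same reflection $x\mapsto1-x$ and the same constant shift by $r$, which produce the open staircase $W(x)=\#\{q:x>1-\eta_q\}$. The paper then compares $W$ with the closed staircase $\hat w(x)=\#\{q:1-\eta_q\le x\}$. Since $\hat w\ge W$, with equality off a finite set, $\hat w\,\square\,\hat w\ge W\,\square\,W$ and $\int\hat w=\int W$, so the violation can only grow. The thresholds equal to $0$, which come from $\eta_q=1$, add a constant and are simply dropped. That argument needs neither the exact computation of $W\,\square\,W$ (your Step~2, which you flagged as the main obstacle) nor any limit.

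Shifting the thresholds to the right by $\delta$, as you do, gives a potential that lies below $W$ rather than above it. That is why you need the explicit strict-tail formula and continuity from below. In exchange you get a staircase with exactly $r$ thresholds, with those at $0$ moved to $\delta$, but its violation is only close to the original one rather than at least as large.

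One small slip: the thresholds of the shifted staircase are not $\xi'_\ell+2\delta$, because $\eta_{r+1}=1$ is not shifted. Pairs that involve the index $r+1$ move by $\delta$ only, so the new thresholds lie in $[\xi'_\ell+\delta,\ \xi'_\ell+2\delta]$. This is harmless, since your limit uses only that they decrease to $\xi'_\ell$ as $\delta\downarrow0$ while staying strictly above it.
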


\begin{proof}
Let $\eta_1,\dots,\eta_r$ be the thresholds of $u$ and put $w(x)=r-u(1-x)=\#\{q:1-\eta_q<x\}$. Then $-u(x)=w(1-x)-r$, and the substitution $x\mapsto1-x$ gives $\bigl((-u)\,\square\,(-u)\bigr)(s)=(w\,\square\,w)(2-s)-2r$ and $\int(-u)=\int w-r$. So the violation of \eqref{eq:potential} for $-u$ at $F$ equals the violation for $w$ at $\tilde F$. Let $\hat w(x)=\#\{q:1-\eta_q\le x\}$. Then $\hat w\ge w$, with equality off the finite set $\{1-\eta_q\}$, so $\hat w\,\square\,\hat w\ge w\,\square\,w$ and $\int\hat w=\int w$; the violation for $\hat w$ at $\tilde F$ is at least that for $w$, hence positive. Finally $\hat w=m+u'$, where $m=\#\{q:\eta_q=1\}$ and $u'$ is the staircase with thresholds $\{1-\eta_q:\eta_q<1\}\subset(0,1)$; the constant $m$ adds $2m$ to both sides of \eqref{eq:potential}, so $u'$ excludes $\tilde F$ (and $u'$ is not empty, since the empty staircase excludes nothing).
\end{proof}

\begin{lem}[Counting]\label{lem:counting}
For every staircase $u$ with thresholds $0<\eta_1\le\dots\le\eta_r\le1$ and every $s\in[0,2]$,
\[
(u\,\square\,u)(s)\,(2-s)\ \le\ 2\int_0^1u .
\]
\end{lem}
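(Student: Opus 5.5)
The plan is to write $m=(u\,\square\,u)(s)$. By \cref{lem:tailsgen}, $m=\#\{\ell\le2r:\xi_\ell\le s\}$, so $\xi_1\le\dots\le\xi_m\le s$. Since $2\int_0^1u=2\sum_{q=1}^r(1-\eta_q)$ by the same lemma, the claim becomes
\[
m(2-s)\le 2\sum_{q=1}^r(1-\eta_q).
\]
I would split into the cases $s<1$ and $s\ge1$. The first is handled by a pairing of thresholds, the second by a coupling argument.

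\emph{Case $s<1$.} First I check that $m\le r$. If $\ell\ge r+1$, the pair $(q,q')=(\ell-r,\,r+1)$ is admissible in the definition of $\xi_\ell$, so $\xi_\ell\ge1+\eta_{\ell-r}>1>s$. Next, for each $q=1,\dots,m$, the pair $(q,\,m+1-q)$ has both indices in $\{1,\dots,m\}\subseteq\{1,\dots,r\}$ and index sum $m+1$. Hence $\eta_q+\eta_{m+1-q}\le\xi_m\le s$. Summing over $q=1,\dots,m$ gives $2\sum_{q\le m}\eta_q\le ms$, and therefore
\[
2\sum_{q=1}^r(1-\eta_q)\ \ge\ 2\sum_{q=1}^m(1-\eta_q)\ \ge\ 2m-ms=m(2-s),
\]
using $1-\eta_q\ge0$ for the dropped terms.

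\emph{Case $s\ge1$.} Here I use a coupling that attains the Makarov bound $\mathbb{P}(S\ge s)=2-s$. Concretely, take $Y=1+(s-1)-X$ reflected on $[s-1,1]$ and $Y=X$ elsewhere; this is the coupling of \cref{cor:readings}(iii) with $\eta=s-1$, for which $S=s$ on $\{X\ge s-1\}$, an event of probability $2-s$. The function $u\,\square\,u=\#\{\ell:\xi_\ell\le\cdot\}$ is nondecreasing and nonnegative. Applying \eqref{eq:potential} to this law in $\DU$ (valid, since $u\,\square\,u$ is Borel by \cref{lem:tailsgen}) gives
\[
m(2-s)\le\mathbb{E}\,(u\,\square\,u)(S)\le2\int_0^1u.
\]
The edge case $s=2$ is trivial.

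The only delicate point is the bound $m\le r$ together with the admissibility of the pairs in the case $s<1$. The coupling in the case $s\ge1$ cannot be used there, because it would give only $m\le2\int u$ without the factor $2-s>1$. That is why I treat the two ranges separately.
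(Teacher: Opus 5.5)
Your proof is correct, but it takes a different route from the paper's for $s\ge1$.

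The paper does not split into cases. It pads the thresholds with $\eta_q=1$ for $q>r$ and runs the same pairing $q\leftrightarrow n+1-q$ for every $n=(u\,\square\,u)(s)\le2r$. When one index exceeds $r+1$, it bounds the pair sum by $1+\eta_{q'}\le1+\eta_{n-r}\le s$, using the admissible pair $(r+1,n-r)$. So the inequality $2\sum_{q\le n}\eta_q\le ns$ holds even when $n>r$, and the padded terms $1-\eta_q=0$ drop out.

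Your case $s<1$ is this argument in the regime where no padding is needed. Your check that $m\le r$ is the same one the paper uses in \cref{thm:twopoint}(i).

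For $s\ge1$ you argue instead on the primal side. You evaluate \eqref{eq:potential} at the reflection coupling with $\eta=s-1$, which puts mass $2-s$ at $s$, and use $u\,\square\,u\ge0$. This is valid. At $s=1$ the coupling is $Y=1-X$, which lies outside the range $0<\eta<1$ of \cref{cor:readings}(iii) but is plainly admissible.

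Your route shows that on $[1,2]$ the lemma is just the staircase bound evaluated at one Makarov-extremal law in $\DU$. On $[0,1)$ the factor $2-s>1$ cannot come from any single law in $\DU$, so a combinatorial argument is needed there, as you note.

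The paper's version is uniform in $s$ and self-contained: it uses only the formula of \cref{lem:tailsgen}. Yours explains more clearly why the two ranges of $s$ behave differently. It costs an appeal to \eqref{eq:potential} and to the Borel measurability of $u\,\square\,u$, both already established before this lemma, so there is no circularity.
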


\begin{proof}
Let $n=(u\,\square\,u)(s)$; there is nothing to prove if $n=0$, and $n\le2r$. Extend the thresholds by $\eta_q=1$ for all $q>r$. By \cref{lem:tailsgen}, $\xi_n\le s$, that is, $\eta_q+\eta_{q'}\le s$ whenever $q+q'=n+1$ and $q,q'\le r+1$. This holds for all $1\le q\le n$ with $q'=n+1-q$: if $q>r+1$, then $q'<n-r\le r$, and the admissible pair $(r+1,n-r)$ gives $\eta_q+\eta_{q'}=1+\eta_{q'}\le1+\eta_{n-r}\le s$; the case $q'>r+1$ is symmetric. Summing over $q=1,\dots,n$ gives $2\sum_{q=1}^n\eta_q\le ns$. The terms $1-\eta_q$ are nonnegative and vanish for $q>r$, so
\[
2\int_0^1u=2\sum_{q=1}^r(1-\eta_q)\ \ge\ 2\sum_{q=1}^n(1-\eta_q)\ \ge\ 2n-ns.
\]
\end{proof}

Every mean-one law on two points $a<1<a+\beta$ has the form
\[
F_{w,\beta}=w\,\delta_a+(1-w)\,\delta_c,\qquad a=1-(1-w)\beta,\quad c=a+\beta=1+w\beta,\quad 0<w<1,
\]
it is supported in $[0,2]$ if and only if $\max(w,1-w)\,\beta\le1$, and its reflection is $F_{1-w,\beta}$. By \cref{rem:B}, $F_{w,\beta}\in\DU$ if and only if $1/\beta\in\N$.

\begin{thm}[Two-point laws]\label{thm:twopoint}
Let $F_{w,\beta}$ be supported in $[0,2]$, let $L=1/\beta$, and let $r_*=\max(1,\lfloor L\rfloor)$.
\begin{enumerate}[label=(\roman*)]
\item If $r+1\le L$, no potential with at most $r$ unit steps (counted with multiplicity), of either sign, excludes $F_{w,\beta}$.
\item If $L\notin\N$, some staircase with $r_*$ steps excludes $F_{w,\beta}$. For $L>1$, with $r=\lfloor L\rfloor$, the thresholds
\[
\eta_q=\frac a2+\Bigl(q-\frac{r+1}2\Bigr)\beta\quad(2\le q\le r),\qquad \eta_1=\min\Bigl(\frac a2-\frac{r-1}2\,\beta,\ w\beta\Bigr)
\]
give the violation $\min\{(1-w)(L-r),\,w(r+1-L)\}/L$ or more; for $L<1$ the single threshold $\eta=w(1+\beta)/2$ gives a violation of at least $w(\beta-1)$.
\end{enumerate}
For $L\notin\N$, therefore, the least number of unit steps, counted with multiplicity, of a potential that excludes $F_{w,\beta}$ is $\max(1,\lfloor 1/\beta\rfloor)$.
\end{thm}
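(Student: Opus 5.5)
For (i), the plan is to reduce to staircases of positive sign and then apply \cref{lem:counting} at the two atoms. By \cref{lem:reflectsteps}, a potential $-u$ with $r$ steps that excludes $F_{w,\beta}$ yields a staircase with at most $r$ thresholds that excludes the reflection $F_{1-w,\beta}$. The hypothesis $r+1\le L$ is symmetric in $w$, so it suffices to show that no staircase $u$ with at most $r$ thresholds in $(0,1]$ excludes any $F_{w,\beta}$ with $r+1\le L$.

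Write $n_a=(u\,\square\,u)(a)$ and $n_c=(u\,\square\,u)(c)$. The left side of \eqref{eq:potential} is $w n_a+(1-w)n_c$, and \cref{lem:counting} gives $2\int u\ge\max\{n_a(2-a),\,n_c(2-c)\}$. A single atom is not enough, so the two bounds have to be combined, and the key extra fact is a separation estimate: $n_c-n_a\le L-1$. To get it, compare $\xi_{n_c}\le c$ with $\xi_{n_a+1}>a$. Since the $\xi_\ell$ are built from sums of two thresholds, moving from index $\ell$ to $\ell+2$ should raise $\xi$ by a controlled amount, and averaging the differences suggests $c-a\ge(n_c-n_a)\cdot(\text{something})$. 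This is delicate. A cleaner route is a sharpened form of \cref{lem:counting}: $2\int u\ge\sum_{q\le n}(2-2\eta_q)$, together with the pairing bound $\eta_q+\eta_{n+1-q}\le s$ applied at both $s=a$ (with $n=n_a$) and $s=c$ (with $n=n_c$). This gives two linear constraints on the sorted thresholds, and the goal is to derive
\[
2\sum_q(1-\eta_q)\ \ge\ w n_a+(1-w)n_c .
\]
Because $n_a,n_c\le 2r\le 2(L-1)$, this should reduce to a finite linear-programming bound in the variables $\eta_q$. The mean identity $wa+(1-w)c=1$ is what makes the combination close. I expect this combination to be the main obstacle. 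My first attempt will be to take $2\int u\ge n_c(2-c)+(n_c-n_a)(c-a)\cdot(\ldots)$ and optimize over $n_a\le n_c\le 2r$ using $\beta\le 1/(r+1)$.

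For (ii), I plan to substitute the stated thresholds and compute directly. The thresholds for $q\ge2$ are equally spaced with gap $\beta$, centered so that the middle pairs sum to $a$. The sums $\eta_q+\eta_{q'}$ with $q+q'=\ell+1$ then equal $a+(\ell-r)\beta$, and with $\eta_{r+1}=1$ the pair $(r+1,\cdot)$ reaches $c$ or just below it. Using \cref{lem:tailsgen}, I will count how many $\xi_\ell\le a$ (about $r$, adjusted by the choice of $\eta_1$ through the $\min$) and how many $\xi_\ell\le c$ (all $2r$, except for boundary effects). The left side is then $w\,n_a+(1-w)2r$, and the right side is $2\sum(1-\eta_q)$, an explicit linear expression. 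The difference should simplify, via $a=1-(1-w)\beta$ and $L\beta=1$, to the stated minimum, with the two branches of the $\min$ defining $\eta_1$ accounting for the two branches of the violation. The case $L<1$ is a one-line check of \eqref{eq:onestep} with $\eta=w(1+\beta)/2$. Here $2\eta=w(1+\beta)\le a$ fails or holds by the sign of $\beta-1$, and I will verify $2\eta\le a$ and $1+\eta\le c$, since $\beta>1$ gives $w\beta>\eta$.

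The final sentence follows by combining (i) with $r=\lfloor L\rfloor-1$ and (ii).
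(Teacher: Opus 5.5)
\textbf{Part (i) has a genuine gap.} Your treatment stops at the step that carries the proof. The reflection reduction and the use of \cref{lem:counting} at both atoms are correct, but two ingredients are missing.

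First, you only use $n_a\le 2r$. What is needed is $n_a\le r'$, where $r'\le r$ is the number of thresholds. By \cref{lem:tailsgen}, $n_a\ge r'+1$ would mean $\xi_{r'+1}\le a$. But the pair $(1,r'+1)$ gives $\xi_{r'+1}\ge\eta_1+\eta_{r'+1}=1+\eta_1>1>a$.

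Second, the two counting bounds should not be merged into a single lower bound on $2\int u$, whether as a maximum or as an LP in the thresholds. Plug each one separately into the violation $V=wn_a+(1-w)n_c-2\int u$. Use $2-a=1+(1-w)\beta$, $2-c=1-w\beta$, and the integer $d=n_c-n_a\ge0$. This gives $V\le(1-w)(d-n_a\beta)$ and $V\le w\bigl(n_a\beta-d(1-\beta)\bigr)$. So $V>0$ would force $n_a\beta<d<n_a\beta/(1-\beta)\le r\beta/(1-\beta)\le1$, which no integer satisfies.

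Your proposed separation estimate $n_c-n_a\le L-1$ is not the missing ingredient, and it does not close the argument on its own. Without $n_a\le r$, the two inequalities above allow $V>0$ for $d=1$ and $n_a=\lfloor L\rfloor\in(L-1,L)$. Moreover $\lfloor L\rfloor\le 2r$ can occur, e.g.\ $r=2$, $L=7/2$. So the bound $n_a\le r'$, which comes from $a<1$, is indispensable.

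\textbf{Part (ii): the approach matches, but two anticipated counts are wrong.} Part (ii) is, as in the paper, a direct substitution.

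For $L>1$ one has $(u\,\square\,u)(c)=r+1$, not $2r$. Indeed $\xi_{r+1}\le c$, because pairs with $2\le q,q'\le r$ and $q+q'=r+2$ sum to $a+\beta=c$, and $\eta_1+1\le1+w\beta=c$; this is exactly what the cap $w\beta$ in $\eta_1$ is for. On the other hand, for $r\ge2$ you get $\xi_{r+2}\ge\eta_2+1>c$, since $\eta_2>w\beta$. Combine this with $(u\,\square\,u)(a)\ge r$ and $2\sum_q\eta_q=ra-2\delta$, where $\delta=\max\bigl(0,\frac{1-(r+w)\beta}2\bigr)$ is what the cap removes. The violation is then at least $(1-w)(1-r\beta)-2\delta$, which equals the stated minimum. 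Using $2r$ at $c$ would overstate it, and the algebra would not land on the claimed value.

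For $L<1$ you cannot verify $2\eta\le a$, since it is equivalent to $w\le1-\beta<0$. What holds is $2\eta\le c$ and $1+\eta\le c$ (the latter because $\beta\ge1$), so $(u\,\square\,u)(c)=2$. Then $(u\,\square\,u)(a)\ge0$ already gives a violation of at least $2(1-w)-2(1-\eta)=w(\beta-1)$.
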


\begin{proof}
(i) By \cref{lem:reflectsteps}, and because the reflection of $F_{w,\beta}$ is $F_{1-w,\beta}$, it suffices to show that no staircase $u$ with $r'\le r$ thresholds in $(0,1]$ excludes $F_{w,\beta}$, for every $w$. Put $A=(u\,\square\,u)(a)$, $C=(u\,\square\,u)(c)$, and $d=C-A$, an integer with $d\ge0$ because $u\,\square\,u$ is nondecreasing. The violation is $V=wA+(1-w)C-2\int u$. By \cref{lem:tailsgen}, $A\ge r'+1$ would need $\xi_{r'+1}\le a$, but $\xi_{r'+1}\ge\eta_{r'+1}+\eta_1=1+\eta_1>1>a$; so $A\le r'\le r$. \Cref{lem:counting} at $s=a$ and at $s=c$ gives $2\int u\ge A(2-a)=A\bigl(1+(1-w)\beta\bigr)$ and $2\int u\ge C(2-c)=C(1-w\beta)$. Hence
\[
V\le(1-w)(d-A\beta)\qquad\text{and}\qquad V\le w(C\beta-d)=w\bigl(A\beta-d(1-\beta)\bigr).
\]
So $V>0$ would force $A\beta<d<A\beta/(1-\beta)$. But $A\beta/(1-\beta)\le r\beta/(1-\beta)\le1$, because $(r+1)\beta\le1$; so $0<d<1$, which is impossible for an integer.

(ii) Case $L>1$. Here $r\beta<1<(r+1)\beta$. Write $\eta_1^\circ=\frac a2-\frac{r-1}2\beta=\frac{1-(r-w)\beta}2$ for the uncapped value, so that $\eta_q=\frac a2+(q-\frac{r+1}2)\beta$ for all $q$ when $\eta_1=\eta_1^\circ$. Then $\eta_1^\circ>0$ since $r\beta<1$, $\eta_1>0$, consecutive uncapped thresholds differ by $\beta$, and $\eta_r=\frac{1+(r-2+w)\beta}2<1$ since $(r-2+w)\beta<r\beta<1$; so the thresholds are valid. Uncapped thresholds satisfy $\eta_q+\eta_{q'}=a+(q+q'-r-1)\beta$. Every pair with $q+q'=r+1$ and $q,q'\le r$ therefore has sum at most $a$ (capping $\eta_1$ only lowers it), so $\xi_r\le a$. Every pair with $q+q'=r+2$ and $2\le q,q'\le r$ has sum $a+\beta=c$, and the pair $(1,r+1)$ has sum $\eta_1+1\le1+w\beta=c$; so $\xi_{r+1}\le c$. Hence $(u\,\square\,u)(a)\ge r$ and $(u\,\square\,u)(c)\ge r+1$. The uncapped thresholds sum to $ra/2$, and capping lowers the sum by $\delta=\max\bigl(0,\eta_1^\circ-w\beta\bigr)=\max\bigl(0,\frac{1-(r+w)\beta}2\bigr)$. So
\[
\begin{aligned}
V&\ge wr+(1-w)(r+1)-2r+ra-2\delta=(1-w)(1-r\beta)-2\delta\\
&=\min\bigl\{(1-w)(1-r\beta),\ w\bigl((r+1)\beta-1\bigr)\bigr\},
\end{aligned}
\]
where the last equality uses $(1-w)(1-r\beta)-(1-(r+w)\beta)=w((r+1)\beta-1)$. Both terms are positive, and they equal $(1-w)(L-r)/L$ and $w(r+1-L)/L$.

Case $L<1$, that is, $\beta>1$. The support condition gives $w\beta\le1$, so $0<\eta=w(1+\beta)/2<1$. Moreover $2\eta=w+w\beta\le1+w\beta=c$, and $1+\eta\le c$ because $\eta\le w\beta$ when $\beta\ge1$. For a single step $\xi_1=2\eta$ and $\xi_2=1+\eta$, so $(u\,\square\,u)(c)=2$ and $V\ge2(1-w)-2(1-\eta)=w(\beta-1)>0$.

The last statement follows: for $L>1$, part (i) with $r=\lfloor L\rfloor-1$, which satisfies $r+1\le L$, shows that fewer than $\lfloor L\rfloor$ steps never suffice, and part (ii) shows that $\lfloor L\rfloor$ steps do; for $L<1$, part (ii) gives one step.
\end{proof}

For $1<L<2$ the potential of (ii) is the single step of \cref{prop:mwwsteps}(a).

\begin{thm}[Symmetric three-point laws]\label{thm:threepoint}
Let $0<b<1$, $L=1/b$, and for $0\le t\le1$ let $G_t$ have masses $\bigl(\frac{1-t}2,t,\frac{1-t}2\bigr)$ on $\{1-b,1,1+b\}$. By \cref{cor:W3}(i), $G_t\in\DU$ if and only if $t\ge\operatorname{dist}(L,2\Z)/L$.
\begin{enumerate}[label=(\roman*)]
\item Let $r\ge1$ and $2r\le L\le2r+2$. The staircase with thresholds
\[
\eta_1=\min\Bigl(b,\ \frac{1-(2r-1)b}2\Bigr),\qquad \eta_q=\frac{1-(2r+3)b}2+2qb\quad(2\le q\le r)
\]
has $(u\,\square\,u)(1-b)=(u\,\square\,u)(1)=r$ and $(u\,\square\,u)(1+b)=r+1$, and \eqref{eq:potential} for it at $G_t$ reads $t\ge\operatorname{dist}(L,2\Z)/L$.
\item If $L\ge2r+2$, no potential with at most $r$ unit steps (counted with multiplicity), of either sign, excludes any $G_t$.
\item If $L\notin2\N$, the least $r$ such that potentials with at most $r$ unit steps exclude every $G_t\notin\DU$ is $\max(1,\lfloor L/2\rfloor)$, and potentials with fewer steps exclude none of them.
\end{enumerate}
\end{thm}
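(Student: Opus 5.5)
For part (i) I would argue as in the proof of \cref{thm:twopoint}(ii): compute the relevant values of $u\,\square\,u$ from the thresholds $\xi_\ell$ of \cref{lem:tailsgen}, then evaluate the violation exactly.

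Write $\eta_1^\circ=\frac{1-(2r-1)b}2$ for the uncapped first threshold. With it the formula $\eta_q=\frac{1-(2r+3)b}2+2qb$ holds for every $q$, so consecutive uncapped thresholds differ by $2b$ and $\eta_q+\eta_{q'}=1-b+2(q+q'-r-1)b$.

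\emph{Validity.} $\eta_1>0$ because $(2r-1)b<1$, and $\eta_r=\frac{1+(2r-3)b}2<1$, so the thresholds lie in $(0,1)$.

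\emph{Values of $u\,\square\,u$.}
\begin{enumerate}[label=(\alph*)]
\item Pairs with $q+q'=r+1$ have sum at most $1-b$; capping $\eta_1$ only lowers them. So $\xi_r\le1-b$.
\item Pairs with $q+q'=r+2$ and $2\le q,q'\le r$ have sum $1+b$, and the pair $(1,r+1)$ has sum $1+\eta_1\le1+b$. So $\xi_{r+1}\le1+b$.
\item $\xi_{r+1}\ge1+\eta_1>1$, so $(u\,\square\,u)(1-b)=(u\,\square\,u)(1)=r$.
\item $\xi_{r+2}\ge1+\eta_2=1+\frac{1-(2r-5)b}2>1+b$ (for $r=1$ the pair $(2,2)$ gives $2$). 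So $(u\,\square\,u)(1+b)=r+1$.
\end{enumerate}

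\emph{Violation.} The uncapped thresholds sum to $r(1-b)/2$, and capping lowers the sum by $\delta=\max(0,\eta_1^\circ-b)=\max\bigl(0,\frac{1-(2r+1)b}2\bigr)$. Hence the violation of \eqref{eq:potential} is
\[
V=\tfrac{1-t}2\,r+t\,r+\tfrac{1-t}2\,(r+1)-2r+r(1-b)-2\delta=\tfrac{1-t}2-rb-2\delta .
\]
If $L\le2r+1$ then $\delta=0$, and $V\le0$ iff $t\ge1-2rb=(L-2r)/L$. If $L\ge2r+1$ then $V=(r+1)b-\frac{1+t}2$, and $V\le0$ iff $t\ge(2r+2-L)/L$. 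In both cases the condition is $t\ge\operatorname{dist}(L,2\Z)/L$.

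For part (ii), $G_t$ equals its own reflection, so by \cref{lem:reflectsteps} it suffices to treat staircases $u$ with $r'\le r$ thresholds in $(0,1]$. Put $A=(u\,\square\,u)(1-b)$, $B=(u\,\square\,u)(1)$ and $C=(u\,\square\,u)(1+b)$, so that $A\le B\le C$. Since $\xi_{r'+1}\ge1+\eta_1>1$, we have $B\le r'$. The left side of \eqref{eq:potential} is $\Lambda=\frac{1-t}2(A+C)+tB$. \Cref{lem:counting} at $s=1$ and $s=1+b$ gives $2\int u\ge B$ and $2\int u\ge C(1-b)$. I would split into three cases.
\begin{enumerate}[label=(\alph*)]
\item $C=B$. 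Then $\Lambda\le C=B\le2\int u$.
\item $C\ge B+1$. Then $\Lambda\le C-\frac{1+t}2\le C-\frac12$. Moreover $C\le B+2\le r+2$ at worst; I would first try to show $C\le r+1$ from $\xi_{r'+2}>1+b$, but even without it $Cb\le\frac12$ should follow from $(2r+2)b\le1$. In either case $\Lambda\le C(1-b)\le2\int u$.
\end{enumerate}
The one delicate point is bounding $C$ so that $Cb\le\frac12$. This is where the hypothesis $L\ge2r+2$ enters, and it must be checked carefully.

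Part (iii) combines the first two. Take $L\notin2\N$ with $G_t\notin\DU$. If $L>2$, set $r=\lfloor L/2\rfloor$, so that $2r<L<2r+2$; part (i) excludes every such $G_t$ with $r$ steps. Part (ii) applied with $r-1$ steps, whose hypothesis $L\ge2(r-1)+2$ holds, shows that no potential with fewer steps excludes any $G_t$. If $L<2$, I would exhibit a single step directly, in the spirit of the case $L<1$ of \cref{thm:twopoint}: choose $\eta$ with $2\eta\le1$ and $1+\eta\le1+b$, and compute the violation as in part (i) with $r=1$, checking that it matches $\operatorname{dist}(L,2\Z)/L=(2-L)/L$. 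I expect this low-$L$ case, together with the bound on $C$ in part (ii), to be the main places where care is needed.
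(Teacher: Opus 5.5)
Part (i) is correct and matches the paper's computation; your cap correction $\delta$ is the same bookkeeping as in \cref{thm:twopoint}(ii). The concrete routes you give for (ii), and for $L<2$ in (iii), both fail.

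\textbf{Part (ii).} In the case $C\ge B+1$, neither bound on $C$ that you hope for is true. Small thresholds push $\xi_{r'+2}$ below $1+b$, and the jump $C-B$ can be as large as $r'$.

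Take $r=2$, $L=6$ and $u=2\,\ind{x\ge\epsilon}$ with $0<\epsilon<b$. Then $u\,\square\,u$ equals $0,2,4$ on $[0,2\epsilon)$, $[2\epsilon,1+\epsilon)$, $[1+\epsilon,2]$, so $A=B=2$ and $C=4>r+1$. Now $C-\frac12=\frac72>\frac{10}3=C(1-b)$, so your chain $\Lambda\le C-\frac12\le C(1-b)\le2\int u$ breaks. A bound $C\le r+2$ would not help either, since $(r+2)b\le\frac12$ does not follow from $(2r+2)b\le1$.

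The inequality still holds in this example; what your chain discards is the size of the jump. Keep $d=C-B\ge1$. Since $A\le B$,
\[
\Lambda\le\frac{1-t}2C+\frac{1+t}2B=C-\frac{1+t}2d\le C-\frac d2 .
\]
Combining this with $C\le r'+d$ and \cref{lem:counting} at $s=1+b$ gives
\[
\Lambda-2\int_0^1u\ \le\ (r'+d)\,b-\frac d2\ =\ \frac{2r'-d(L-2)}{2L}\ \le\ 0,
\]
because $d\ge1$ and $L-2\ge2r\ge2r'$. Repaired this way, your (ii) is a direct argument. The paper instead writes $G_t=(1-t)F_{1/2,2b}+t\,\delta_1$ and applies \cref{thm:twopoint}(i) with spacing $\beta=2b$, so $1/\beta=L/2\ge r+1$. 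It then uses that \eqref{eq:potential} is linear in the law and holds at $\delta_1\in\DU$.

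\textbf{Part (iii), $1<L<2$.} Here $b>\frac12$, and no single step with $2\eta\le1$ excludes any $G_t$. Such an $\eta$ satisfies $\eta\le\frac12<b$, so $\mathbb{P}(S\ge1+\eta)=\frac{1-t}2$.
\begin{itemize}
\item If $2\eta\le1-b$, then \eqref{eq:onestep} reads $t\ge4\eta-1$, which is vacuous because $4\eta-1\le1-2b<0$.
\item If $1-b<2\eta\le1$, its left side is $\frac{1+t}2+\frac{1-t}2=1\le2(1-\eta)$.
\end{itemize}
Part (i) with $r=1$ does not transfer either: for $b>\frac13$ it picks $\eta=\frac{1-b}2$ and gives the vacuous $t\ge1-2b$.

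The step must satisfy $\frac12<\eta\le b$, so that both tails see only the atom at $1+b$. Taking $\eta=b$ gives $1-t\le2(1-b)$, that is, $t\ge(2-L)/L=\operatorname{dist}(L,2\Z)/L$. This is \cref{prop:mwwsteps}(b), which is what the paper uses. For the other half of the claim at $L<2$, a potential with no steps is constant and excludes nothing.
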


\begin{proof}
(i) \emph{Validity.} $\eta_1>0$ because $L>2r-1$. For $r\ge2$, $\eta_2=\frac{1-(2r-5)b}2>b\ge\eta_1$ because $L>2r-3$, consecutive thresholds from $\eta_2$ on differ by $2b$, and $\eta_r=\frac{1+(2r-3)b}2<1$ because $L>2r-3$. \emph{Values of $u\,\square\,u$.} For $2\le q,q'\le r$, $\eta_q+\eta_{q'}=1-(2r+3)b+2(q+q')b$, which is $1-b$ if $q+q'=r+1$ and $1+b$ if $q+q'=r+2$. The pairs containing $\eta_1$ satisfy $\eta_1+\eta_r\le\frac{1-(2r-1)b}2+\frac{1+(2r-3)b}2=1-b$ (for $r=1$: $2\eta_1\le1-b$) and $\eta_1+\eta_{r+1}=\eta_1+1\le1+b$. So $\xi_r\le1-b$ and $\xi_{r+1}\le1+b$. On the other hand $\xi_{r+1}\ge\eta_1+\eta_{r+1}=1+\eta_1>1$, and for $r\ge2$, $\xi_{r+2}\ge\eta_2+\eta_{r+1}=1+\eta_2>1+b$; for $r=1$, $u\,\square\,u\le2r=2$. With \cref{lem:tailsgen} and monotonicity this gives the values $r,r,r+1$. \emph{The inequality.} It reads $\frac{1-t}2r+tr+\frac{1-t}2(r+1)\le2r-2\sum_q\eta_q$, that is, $t\ge4\sum_q\eta_q-(2r-1)$. Since $\sum_{q=2}^r\eta_q=(r-1)\frac{1+b}2$, the right side is $4\eta_1+2(r-1)b-1$. If $L\ge2r+1$, then $\eta_1=b$ and it equals $(2r+2)b-1=(2r+2-L)/L$; if $L\le2r+1$, then $\eta_1=\frac{1-(2r-1)b}2$ and it equals $1-2rb=(L-2r)/L$. In both ranges this is $\operatorname{dist}(L,2\Z)/L$.

(ii) $G_t=(1-t)\,F_{1/2,2b}+t\,\delta_1$, where $F_{1/2,2b}=\frac12(\delta_{1-b}+\delta_{1+b})$ is the two-point law of spacing $\beta=2b$. Since $1/\beta=L/2\ge r+1$, \cref{thm:twopoint}(i) shows that every potential with at most $r$ unit steps holds at $F_{1/2,2b}$, and it holds at $\delta_1\in\DU$; the inequality is linear in the law.

(iii) If $1<L<2$, one step suffices by \cref{prop:mwwsteps}(b), and the potential with no steps is constant. If $L>2$, let $r''=\lfloor L/2\rfloor$, so $2r''<L<2r''+2$. Part (i) with $r''$ excludes every $G_t$ with $t<\operatorname{dist}(L,2\Z)/L$, and part (ii) with $r''-1$ steps, applicable because $L\ge2r''$, excludes none.
\end{proof}

\begin{cor}\label{cor:noatombound}
For every $r\ge1$, the laws on $[0,2]$ with mean one that satisfy \eqref{eq:potential} for every potential with at most $r$ unit steps form a set strictly larger than $\DU$; it contains the two-point laws $F_{w,\beta}$ with $r+1<1/\beta<r+2$. In particular, the number of unit steps, counted with multiplicity, needed to exclude a law outside $\DU$ is not bounded by any function of its number of atoms.
\end{cor}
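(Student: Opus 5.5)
The corollary should follow directly from \cref{thm:twopoint}, applied to a two-point law whose spacing is chosen relative to $r$.

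Fix $r\ge1$ and pick $\beta$ with $r+1<L=1/\beta<r+2$, for instance $L=r+\frac32$, and any $w\in(0,1)$.

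\emph{Support.} Since $L>1$ we have $\beta<1$, so $\max(w,1-w)\,\beta\le1$. Hence $F_{w,\beta}$ is supported in $[0,2]$ and has mean one.

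\emph{Not attainable.} The number $L$ is not an integer, so $F_{w,\beta}\notin\DU$ by \cref{rem:B}.

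\emph{Not excluded.} Because $r+1\le L$, \cref{thm:twopoint}(i) says that no potential with at most $r$ unit steps, of either sign, excludes $F_{w,\beta}$. So $F_{w,\beta}$ satisfies \eqref{eq:potential} for all of them.

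These three facts put $F_{w,\beta}$ in the set described in the corollary but outside $\DU$. That set contains $\DU$, because \eqref{eq:potential} holds on $\DU$ for every staircase and its negative: these are step functions with finitely many steps, so $u\,\square\,u$ is Borel and the display after the definition of $u\,\square\,u$ applies. Hence the containment is strict, and every $F_{w,\beta}$ with $r+1<1/\beta<r+2$ lies in the larger set.

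\emph{No bound in terms of atoms.} Suppose some function $g$ bounded the number of steps needed to exclude a law outside $\DU$ by $g(\text{number of atoms})$. Take $r=g(2)$. The two-point laws just constructed lie outside $\DU$, yet no potential with $g(2)$ or fewer steps excludes them, which contradicts the supposed bound.

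No step here is a real obstacle; the only thing to check is that the support condition holds for the chosen $\beta$, which it does since $\beta<1$.
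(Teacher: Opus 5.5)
Your proof is correct and is essentially the paper's: the paper also checks the support condition, gets $F_{w,\beta}\notin\DU$ from $1/\beta\notin\N$ (\cref{rem:B}), and uses \cref{thm:twopoint}(i) for non-exclusion. For the final claim the paper cites the exact count $\max(1,\lfloor1/\beta\rfloor)$ from \cref{thm:twopoint}, whereas your contradiction argument needs only part (i), which works equally well (take $r=\max(1,g(2))$ to stay within $r\ge1$).
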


\begin{proof}
For $r+1<1/\beta<r+2$, $F_{w,\beta}$ is supported in $[0,2]$, lies outside $\DU$ because $1/\beta\notin\N$, and satisfies all these inequalities by \cref{thm:twopoint}(i). \Cref{thm:twopoint} shows that two-point laws need $\max(1,\lfloor1/\beta\rfloor)$ steps, which is unbounded.
\end{proof}

What governs the number of steps is the spacing, not the number of atoms; this matches the grid picture, in which a law on $\{1/k,\dots,(2k-1)/k\}$ is decided by thresholds in $\{1/k,\dots,(k-1)/k\}$ (\cref{rem:multiplicity}). Asymmetric three-point laws behave the same way. The mean-one law with masses $(1/160,1/40,31/32)$ on $\{123/280,29/40,283/280\}$ is not in $\DU$, satisfies every bound with at most two unit steps, and is excluded by a staircase with three (Proposition~S4.1).

\section{Laws with a bounded density}\label{sec:density}

Every example so far that separates $\DU$ from the convex-order set has a large atom or a large density. Bounding the density brings convex order back, up to a point. It decides membership at density one; it fails above density two; and for single steps the cutoff is exactly two. The range between one and two holds the most interesting open question of the paper (\cref{prob:density}).

For a random variable $X$ with finite mean write $\pi_X(x)=\mathbb{E}(X-x)^+$ for its stop-loss function.

\begin{thm}[Densities at most one]\label{thm:density}
Let $S$ have a Lebesgue density $g\le1$. Then $\law(S)\in\DU$ if and only if $S\le_{\mathrm{cx}}2U$.
\end{thm}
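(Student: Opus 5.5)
Necessity is \cref{cor:readings}(i). For sufficiency, the introduction says the density result follows from the lattice statement \cref{cor:smallatoms} by rounding, and that is the route I will take. Since $\DU$ is weakly closed, it is enough to approximate $\law(S)$ weakly by lattice laws that lie in $\DU$. For each $k$ I will build a law $S_k$ on $\{1/k,\dots,(2k-1)/k\}$ with three properties: mean one, all atoms at most $1/k$, and $kS_k-1\le_{\mathrm{cx}}2U_k$. Then \cref{cor:smallatoms} gives $\law(S_k)\in\DU$, and $S_k\to S$ weakly finishes the proof.

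\emph{Construction of $S_k$.} I would not round $S$ to the nearest grid point, because that can destroy convex order. Instead I discretize the stop-loss function. Let $\pi=\pi_S$ and define $S_k$ as the law on $(1/k)\Z$ whose stop-loss function is the piecewise-linear interpolation of $\pi$ at the points $(j+\frac12)/k$, shifted appropriately. Equivalently, $S_k$ puts on the grid point $t$ the mass of $S$ in the cell $[t-\frac1{2k},t+\frac1{2k})$. This is the convex-order-decreasing projection: conditioning each cell to its midpoint would preserve the mean and decrease convex order, but the midpoint is not the conditional mean. To handle that, I take the interpolation of the convex function $\pi$ at the midpoints, which is convex, has slopes in $[-1,0]$, and dominates $\pi$. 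So $S_k\ge_{\mathrm{cx}}S$ slightly, and I need an extra margin.

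\emph{Margin.} Since the density of $2U$ is $\frac12\le1$, I will compare instead with $2U_k$ read on the shifted grid, that is, $(T+1)/k$ with $T=2U_k$. Its stop-loss function is the interpolation of $\pi_{2U}$ at matching grid nodes, again from above. The cleanest plan is to put both interpolations on the same node set and use the fact that linear interpolation at common nodes preserves the pointwise inequality $\pi_S\le\pi_{2U}$, because interpolation is monotone in the interpolated values. Concretely, I will check the hinge inequalities of \cref{prop:hinges}(i) at even $m$ only. These concern $\mathbb{E}(T-2j)^+$, the stop-loss of $kS_k-1$ at even integers, and correspond to the nodes $(2j+1)/k$ for $S$. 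I then choose the discretization so that the stop-loss function of $S_k$ agrees with $\pi_S$ at those nodes. That means assigning the mass of $S$ on $[(2j+1)/k,(2j+3)/k)$ to the two grid points $(2j+2)/k$ and its neighbors by a mean-preserving split inside the window. The masses are at most $\frac2k\cdot1$ per window, so one way to split is onto the three points $(2j+1)/k,(2j+2)/k,(2j+3)/k$ keeping each atom at most $1/k$. I expect this bookkeeping, making every atom at most $1/k$ while matching $\pi_S$ at the odd nodes, to be the main obstacle. If the exact matching fails, I would fall back on the grid-independent form of the density assumption: the mass of $S$ in any interval of length $1/k$ is at most $1/k$, so assigning cell masses to grid points gives atoms $\le1/k$ automatically. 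I would then absorb the convex-order error by mixing with a small multiple of $\law(2U)$'s discretization, or by using cells of length $1/k$ centred so that stop-loss values agree at every other node.

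\emph{Conclusion.} Once each $S_k\in\DU$ and $\pi_{S_k}\to\pi_S$ uniformly (the error is $O(1/k)$, since stop-loss functions are 1-Lipschitz), weak convergence follows, and closedness of $\DU$ gives $\law(S)\in\DU$.
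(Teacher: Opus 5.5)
\textbf{There is a genuine gap}, and it is exactly where you place the ``extra margin''. You have the comparison law the wrong way round.

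\emph{The target lies below $\pi_{2U}$ at the hinge nodes.} The law $V_k=(2U_k+1)/k$ is uniform on the odd points $\{1/k,3/k,\dots,(2k-1)/k\}$. Its stop-loss function is not an interpolation of $\pi_{2U}$ from above. $V_k$ is the conditional expectation of $2U$ on the cells $[2i/k,(2i+2)/k)$, so $\pi_{V_k}\le\pi_{2U}$, with $\pi_{V_k}(m/k)=\pi_{2U}(m/k)$ for even $m$ and $\pi_{V_k}(m/k)=\pi_{2U}(m/k)-\frac1{4k^2}$ for odd $m$. With $T=kS_k-1$, the hinge inequality $I(h_j)$ reads
$\pi_{S_k}((2j+1)/k)\le\pi_{2U}((2j+1)/k)-\frac1{4k^2}$.
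So the hinges sit precisely at the odd nodes where the target has this deficit. A discretization with $\pi_{S_k}=\pi_S$ at those nodes gives only $\pi_{S_k}\le\pi_{2U}$ there, which is not enough.

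\emph{A counterexample to the step.} Take $S=2U$, which has density $\frac12$. Any $S_k$ matching $\pi_S$ at the odd nodes violates every $I(h_j)$, by $\frac1{4k^2}$ in stop-loss units.

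\emph{The endpoints.} Every mean-one law on $[0,2]$ has $\pi_S(x)\ge1-x$, so $\pi_{2U}(1/k)-\pi_S(1/k)\le\frac1{4k^2}$, with equality only if $S$ has no mass below $1/k$. In addition, $S_k$ may not charge $0$ or $2$ at all, and your windows do not cover $[0,1/k)$ or $(2-1/k,2]$.

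\emph{Your fallbacks do not help.} Mixing with (a discretization of) $2U$ cannot open a gap where $\pi_S=\pi_{2U}$. Recentred cells only reproduce the matching at every other node.

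\textbf{The missing idea} is a preliminary perturbation that creates a quantitative margin while keeping the density at most one. Contraction toward $1$ alone fails, because it raises the density to $g/\rho$. The paper pairs it with mixing: it takes $S'\sim(1-t)\law\bigl(1+\rho(S-1)\bigr)+t\,U[1/4,7/4]$ with $\rho=1-t/3$. This law has
\begin{itemize}
\item density at most $(1-t)/\rho+2t/3\le1$;
\item support in $[t/3,2-t/3]$;
\item $\pi_{2U}(m/k)-\pi_{S'}(m/k)\ge\frac1{4k^2}$ for all $1\le m\le2k-1$, once $k\ge3t^{-3/2}$.
\end{itemize}

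After this step, the bookkeeping you flagged as the main obstacle becomes routine. Use the randomized (hat-function) rounding of $S'$ onto $(1/k)\Z$, i.e.\ the linear interpolation of $\pi_{S'}$ at the grid nodes themselves rather than at the midpoints $(j+\frac12)/k$. Your midpoint version puts atoms off the grid, and the nearest-cell rounding you call equivalent is not mean-preserving. The hat-function rounding:
\begin{itemize}
\item preserves the mean and the value of $\pi_{S'}$ at every grid point;
\item has atoms $\int g'(s)(1-|ks-j|)^+\,ds\le1/k$;
\item has no atom at $0$ or $2$, since $1/k\le t/3$.
\end{itemize}
The margin then gives $\pi_{S'_k}\le\pi_{V_k}$ at all grid points, hence everywhere, so \cref{cor:smallatoms} applies. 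Finish by letting $k\to\infty$ and then $t\to0$.
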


For such laws the convex potentials are the only ones needed in \cref{thm:dual}. The proof reduces to the lattice statement \cref{cor:smallatoms}.

\begin{proof}
Necessity is \cref{cor:readings}(i). For sufficiency let $S\le_{\mathrm{cx}}2U$; then $S\in[0,2]$ almost surely and $\mathbb{E}S=1$.

\emph{Step 1: a margin.} Fix $t\in(0,1/4]$, put $\rho=1-t/3$, and let $S'$ have the law $(1-t)\law\bigl(1+\rho(S-1)\bigr)+t\,U[1/4,7/4]$. Its density is at most $(1-t)/\rho+2t/3\le1$, because $(1-t)/(1-t/3)\le1-2t/3$ (equivalent to $0\le 2t^2/9$). It has mean $1$ and is supported in $[t/3,2-t/3]$. Since $1+\rho(S-1)\le_{\mathrm{cx}}1+\rho(2U-1)$, which is uniform on $[1-\rho,1+\rho]$, and uniform laws on intervals symmetric about $1$ are increasing in convex order as the interval grows,
\[
\pi_{2U}(x)-\pi_{S'}(x)\ \ge\ t\bigl(\pi_{2U}(x)-\pi_{V}(x)\bigr),\qquad V\sim U[1/4,7/4].
\]
Here $\pi_{2U}(x)=(2-x)^2/4$ on $[0,2]$. For $|x-1|\le3/4$ the bracket equals $\frac1{12}\bigl(\frac34-(x-1)^2\bigr)\ge\frac1{64}$; for $x\in[0,1/4]$ it equals $x^2/4$, and symmetrically $(2-x)^2/4$ on $[7/4,2]$. So on $[t/3,2-t/3]$ the bracket is at least $t^2/36$, and the gap $\pi_{2U}-\pi_{S'}$ is at least $t^3/36$. For $x<t/3$ the gap is $x^2/4$, since $\pi_{S'}(x)=1-x$ there, and symmetrically it is $(2-x)^2/4$ for $x>2-t/3$. Hence, for every integer $k\ge3t^{-3/2}$ and every integer $1\le m\le2k-1$,
\begin{equation}\label{eq:margin}
\pi_{2U}(m/k)-\pi_{S'}(m/k)\ \ge\ \frac1{4k^2}.
\end{equation}

\emph{Step 2: rounding.} Let $S'_k$ be the randomized rounding of $S'$ to $(1/k)\Z$: given $S'$, it equals $\lfloor kS'\rfloor/k$ with probability $1-\frc(kS')$ and $\lceil kS'\rceil/k$ otherwise. Then $\mathbb{E}[S'_k\mid S']=S'$, and the atom of $S'_k$ at $j/k$ is $\int g'(s)\,(1-|ks-j|)^+\,ds\le1/k$, where $g'\le1$ is the density of $S'$. Since $k\ge3t^{-3/2}\ge3/t$, we have $1/k\le t/3$, so $S'_k$ has no atom at $0$ or $2$: it is supported in $\{1/k,\dots,(2k-1)/k\}$. Since $x\mapsto(x-m/k)^+$ is linear between consecutive lattice points, $\pi_{S'_k}(m/k)=\pi_{S'}(m/k)$ for every integer $m$.

\emph{Step 3: lattice convex order.} Let $V_k$ be uniform on $\{1/k,3/k,\dots,(2k-1)/k\}$, so that $kV_k-1=2U_k$. Summing directly, $\pi_{V_k}(m/k)=\pi_{2U}(m/k)$ for even $m$ and $\pi_{V_k}(m/k)=\pi_{2U}(m/k)-\frac1{4k^2}$ for odd $m$, $0\le m\le2k$. By \eqref{eq:margin} and Step 2, $\pi_{S'_k}(m/k)\le\pi_{V_k}(m/k)$ for $1\le m\le2k-1$; both sides equal $1-m/k$ for $m\le0$ and $0$ for $m\ge2k$. Both functions are piecewise linear with kinks only at lattice points, so $\pi_{S'_k}\le\pi_{V_k}$ everywhere, that is, $kS'_k-1\le_{\mathrm{cx}}2U_k$. All atoms of $S'_k$ are at most $1/k$ and its mean is one, so $\law(S'_k)\in\DU$ by \cref{cor:smallatoms}.

\emph{Step 4: limits.} $|S'_k-S'|\le1/k$, so $S'_k\to S'$ in law, and $\law(S')\in\DU$ because $\DU$ is weakly closed (\cref{sec:setup}). As $t\to0$, $\law(S')\to\law(S)$ weakly, so $\law(S)\in\DU$.
\end{proof}

For example, the law with density $1$ on $[1/3,5/6]\cup[7/6,5/3]$ satisfies $S\le_{\mathrm{cx}}2U$ and therefore belongs to $\DU$; it is not unimodal, so the unimodal criterion of \cite{MaoWangWang2019} does not apply to it. The constant $1$ in \cref{thm:density} cannot be replaced by any constant above $2$, even if the folded condition of \cref{cor:readings}(ii) is added.

\begin{prop}\label{prop:densitysharp}
For $c>2$ let $\eta=\frac{2c-1}{6c}$, $h=\frac1{2c}$, and let $S$ have density $c$ on $[2\eta,2\eta+h]\cup[1+\eta,1+\eta+h]$. Then $\mathbb{E}S=1$, $S\le_{\mathrm{cx}}2U$, and the folded condition $\law(U)\le_{\mathrm{cx}}\frac12\bigl(\law(S^\flat)+\law(\kappa)\bigr)$ of \cref{cor:readings}(ii) holds, but the single step at $\eta$ is violated by $\frac{c-2}{6c}$. So $\law(S)\notin\DU$.
\end{prop}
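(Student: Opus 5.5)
The plan is to verify the four claims separately by direct computation, since $S$ is completely explicit. Each interval has mass $c\,h=\frac12$. The membership conclusion $\law(S)\notin\DU$ then follows from \cref{cor:readings}(iii): the single step \eqref{eq:onestep} is necessary for membership.

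\emph{Mean and the single step.} The midpoints of the two intervals are $2\eta+h/2$ and $1+\eta+h/2$, so $\mathbb{E}S=\frac12(1+3\eta+h)$. The choice $3\eta+h=\frac{2c-1}{2c}+\frac1{2c}=1$ gives $\mathbb{E}S=1$. At the threshold $\eta$ we have $\mathbb{P}(S\ge2\eta)=1$ and $\mathbb{P}(S\ge1+\eta)=\frac12$, so the left side of \eqref{eq:onestep} is $\frac32$. The right side is $2(1-\eta)=\frac{4c+1}{3c}$. The difference is $\frac{9c-8c-2}{6c}=\frac{c-2}{6c}>0$.

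\emph{Convex order.} Since the means agree, I would show that $d(x)=\pi_{2U}(x)-\pi_S(x)\ge0$ on $[0,2]$, where $\pi_{2U}(x)=(2-x)^2/4$. Outside the support the computation is immediate. For $x\le2\eta$ we have $\pi_S(x)=1-x$, so $d=x^2/4$. For $x\ge1+\eta+h$ we have $\pi_S=0$, so $d=(2-x)^2/4$.

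On the two intervals $d''=\frac12-c<0$, so $d$ is concave there and its minimum is at an endpoint. Hence it suffices to check $d$ at the four endpoints and on the middle gap $[2\eta+h,1+\eta]$. On the gap, $\pi_S$ is linear with slope $-\frac12$, so $d$ is a convex quadratic with $d'(x)=-(2-x)/2+\frac12$. This vanishes at $x=1$, which lies in the gap because $2\eta+h=\frac{2c+2}{3c}\cdot\frac12\cdot 2\cdot\frac12<1$; more precisely $2\eta+h=\frac{2c+2}{6c}\cdot 2\cdot\frac12$, and in any case it is less than $1$ for $c>2$. So on the gap it suffices to evaluate $d(1)=\frac14-\pi_S(1)$, where $\pi_S(1)=\frac12(\eta+h/2)$. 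Substituting $\eta$ and $h$ then yields explicit rational functions of $c$, and I expect each to be nonnegative for $c>2$. The two interval endpoints adjacent to the gap follow from the gap being convex with minimum $d(1)\ge0$. The outer endpoints are covered by the formulas above.

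\emph{Folded condition.} Here $S^\flat$ has mass $\frac12$ spread uniformly on $[2\eta,2\eta+h]$ and on $[\eta,\eta+h]$. These two intervals are disjoint exactly when $\eta>h$, i.e.\ $c>2$, which is where the hypothesis enters. The law $\kappa$ is $\frac12\delta_0+\frac12\delta_1$. So $M=\frac12(\law(S^\flat)+\law(\kappa))$ has atoms $\frac14$ at $0$ and at $1$, and density $c/2$ on the two intervals. Both laws have mean $\frac12$, since the means match by $\mathbb{E}S=1$. It remains to show $e(x)=\pi_M(x)-(1-x)^2/2\ge0$ on $[0,1]$.

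Now $e''=g_M-1$, which is $c/2-1>0$ on the intervals and $-1$ on the gaps. So $e$ is concave on the gaps $[0,\eta]$, $[\eta+h,2\eta]$ and $[2\eta+h,1]$, and there it is minimized at the gap endpoints. On the intervals $e$ is convex, so I would check $e$ at the gap endpoints and locate any interior minimum on each interval by solving $e'=0$. This is a linear equation, because $\pi_M$ is quadratic there. The values at $x=0$ and $x=1$ are $e(0)=0$ and $e(1)=0$ by construction.

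The main obstacle is this last bookkeeping: confirming that the interior minima on the two density-$c/2$ intervals stay nonnegative for every $c>2$. If that fails near the endpoints, I would first try instead to dominate $M$ by a simpler law whose convex comparison with $U$ is evident, and only then fall back on the explicit piecewise algebra.
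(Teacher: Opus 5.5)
Your plan follows the paper's proof step for step. The mean and the single step are done by direct arithmetic. Convex order goes through $d=\pi_{2U}-\pi_S$, which is explicit outside the support, concave on the two intervals, and a convex quadratic on the middle gap with minimum at $x=1$. The folded condition goes through $e=\pi_M-\pi_U$, which is concave on the gaps and convex on the two intervals of density $c/2$.

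What is missing is the part that carries the content. You leave the sign of $d(1)$ as an expectation, and you leave the interior minima of $e$ as an open `obstacle' with a fallback plan. As written, therefore, neither $S\le_{\mathrm{cx}}2U$ nor the folded condition is actually established. Both close in a line, and they give exactly the values the paper computes.
\begin{itemize}
\item For convex order, $d(1)=\frac14-\frac12\bigl(\eta+\frac h2\bigr)=\frac{2c-1}{24c}>0$.
\item For the folded condition, $e'(y)=F_M(y)-y$. On $[\eta,\eta+h]$ the line $F_M(y)=\frac14+\frac c2(y-\eta)$ meets the diagonal at $y=\frac13$, and on $[2\eta,2\eta+h]$ the line $F_M(y)=\frac12+\frac c2(y-2\eta)$ meets it at $y=\frac23$, for every $c>2$.
\item Both points lie inside their intervals: $\frac13-\eta=\frac1{6c}$ and $\eta+h-\frac13=\frac1{3c}$, and similarly $\frac23-2\eta=\frac1{3c}$ and $2\eta+h-\frac23=\frac1{6c}$.
\item Integrating, $e(\frac13)=\int_0^{1/3}(\frac14-y)\,dy+\frac c4(\frac13-\eta)^2=\frac1{36}+\frac1{144c}=\frac{4c+1}{144c}>0$.
\item $e(\frac23)=e(\frac13)$, because $3\eta+h=1$ makes $M$ symmetric about $\frac12$.
\end{itemize}
With $e(0)=e(1)=0$ and concavity on the gaps, this gives $e\ge0$ on $[0,1]$.

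There is also a minor slip. Your expression for $2\eta+h$ is garbled: $\frac{2c+2}{6c}$ is $\eta+h$, while $2\eta+h=\frac{4c+1}{6c}$. This is still below $1$, so $x=1$ does lie in the middle gap and that step stands.
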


\begin{proof}
Each interval carries mass $ch=1/2$, and $2\eta+h=\frac{4c+1}{6c}<1<1+\eta$, $1+\eta+h=1+\frac{2c+2}{6c}<2$. \emph{Mean:} $\frac12(2\eta+\frac h2)+\frac12(1+\eta+\frac h2)=\frac{1+3\eta+h}2=1$. \emph{One step:} $\mathbb{P}(S\ge2\eta)+\mathbb{P}(S\ge1+\eta)=\frac32$, while $2(1-\eta)=\frac{4c+1}{3c}$; the difference is $\frac{c-2}{6c}$.

\emph{Convex order.} Let $D=\pi_{2U}-\pi_S$. Left of the first interval $D(x)=x^2/4$, and right of the second $D(x)=(2-x)^2/4$. Between the intervals, $D(x)=\frac{(2-x)^2}4-\frac12\bigl(1+\eta+\frac h2-x\bigr)$, with $D'(x)=\frac{x-1}2$; its minimum on $[2\eta+h,1+\eta]$ is at $x=1$, where $D(1)=\frac{2c-1}{24c}>0$. On each interval $D''=\frac12-c<0$, so $D$ is concave there and is minimized at an endpoint. Hence $D\ge0$.

\emph{Folded order.} With $\kappa=\ind{S>1}$, the law $M=\frac12\bigl(\law(S^\flat)+\law(\kappa)\bigr)$ has atoms $\frac14$ at $0$ and at $1$ and density $c/2$ on $[\eta,\eta+h]$ and on $[2\eta,2\eta+h]$; both $M$ and $U$ have mean $\frac12$. Put $H(x)=\pi_M(x)-\pi_U(x)=\int_0^x(F_M(y)-y)\,dy$ for $x\in[0,1]$, where $F_M$ is the distribution function of $M$; $H(0)=H(1)=0$. Because $c>2$, we have $\frac14\le\eta<\eta+h\le\frac12\le2\eta<2\eta+h\le\frac34$, so $F_M$ equals $\frac14,\frac12,\frac34$ on the gaps $[0,\eta)$, $[\eta+h,2\eta)$, $[2\eta+h,1)$ and has slope $c/2>1$ on the two intervals, where it crosses the diagonal at $y=\frac13$ and $y=\frac23$. So $F_M(y)-y$ changes sign from $+$ to $-$ at $\frac14,\frac12,\frac34$ and from $-$ to $+$ at $\frac13,\frac23$, and the interior local minima of $H$ are at $\frac13$ and $\frac23$. Direct integration gives $H(\frac13)=H(\frac23)=\frac{4c+1}{144c}>0$. Hence $H\ge0$ on $[0,1]$, which is the folded condition.
\end{proof}

At $c=2$ the same construction gives violation $0$, and the law is then attainable (\cref{prop:coupling}). For single steps the constant $2$ is exactly right, as we now show. The argument uses a dual form of the density condition.

\begin{lem}[Dual certificates]\label{lem:densitydual}
Let $c>0$, let $u$ be a bounded potential on $[0,1]$ such that $N=u\,\square\,u$ is Borel, and let $\varphi:[0,2]\to\R$ be convex. If
\[
\frac12\int_0^2\varphi(s)\,ds+c\int_0^2\bigl(N(s)-\varphi(s)\bigr)^+\,ds\ \le\ 2\int_0^1u,
\]
then \eqref{eq:potential} holds for $u$ at the law of every $S$ that has a density at most $c$ and satisfies $S\le_{\mathrm{cx}}2U$.
\end{lem}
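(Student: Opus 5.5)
The argument mirrors the chain in the proof of \cref{cor:smallatoms}, with the lattice replaced by Lebesgue measure on $[0,2]$. Fix $S$ with density $g\le c$ and $S\le_{\mathrm{cx}}2U$. Then $S\in[0,2]$ almost surely and $\mathbb{E}S=1$. We bound the left side of \eqref{eq:potential} by splitting $N$ against the convex function $\varphi$:
\[
\mathbb{E}\,N(S)\ \le\ \mathbb{E}\,\varphi(S)+\mathbb{E}\bigl(N(S)-\varphi(S)\bigr)^+ .
\]
This is just $y\le y^+$ applied to $y=N(S)-\varphi(S)$. It needs $N$ Borel, which is assumed, and $\varphi$ Borel, which holds since $\varphi$ is convex.

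We treat the two terms separately.

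\emph{First term.} Since $\varphi$ is convex on $[0,2]$, convex order gives $\mathbb{E}\varphi(S)\le\mathbb{E}\varphi(2U)=\frac12\int_0^2\varphi(s)\,ds$. One must check that $\varphi$ is integrable. It is convex and real-valued on the closed interval, so it is bounded above by $\max(\varphi(0),\varphi(2))$ and below by an affine minorant. It may jump upward at the endpoints, but $S$ and $2U$ put no mass there, because $S$ has a density. The definition of $\le_{\mathrm{cx}}$ in \cref{sec:setup} uses convex functions on $\R$. To reduce to that, apply it to the convex function equal to $\varphi$ on $(0,2)$, extended by its one-sided limits, which agrees with $\varphi$ almost everywhere. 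If $\varphi$ is unbounded near an endpoint, truncate with $\max(\varphi,-M)$, which is still convex, and let $M\to\infty$ by monotone convergence.

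\emph{Second term.} The integrand $(N-\varphi)^+$ is nonnegative and $g\le c$, so
\[
\mathbb{E}(N(S)-\varphi(S))^+=\int_0^2(N-\varphi)^+g\,ds\le c\int_0^2(N-\varphi)^+\,ds .
\]
Here we use only that $S$ is supported in $[0,2]$.

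Adding the two bounds and using the hypothesis gives $\mathbb{E}N(S)\le2\int_0^1u$, which is \eqref{eq:potential}. Boundedness of $u$ makes $N$ bounded, so all expectations are finite. The only delicate point is the measure-theoretic justification for applying convex order to a convex $\varphi$ on a closed interval, handled by the endpoint and truncation remarks above. Everything else is the one-line chain.
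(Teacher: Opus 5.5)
Your chain $\mathbb{E}N(S)\le\mathbb{E}\varphi(S)+\int_0^2(N-\varphi)^+g\le\frac12\int_0^2\varphi+c\int_0^2(N-\varphi)^+\le2\int_0^1u$ is the same as the paper's, and your treatment of the second term is correct. The gap is in the step you yourself call delicate: the proof of $\mathbb{E}\varphi(S)\le\mathbb{E}\varphi(2U)$.

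Replacing $\varphi$ at $0$ and $2$ by its one-sided limits gives a continuous convex function on $[0,2]$. It does not give a convex function on $\R$, and in general no such function agrees with $\varphi$ on $(0,2)$. Take $\varphi(s)=-\sqrt{s}$. Any real-valued convex function on $\R$ that equals $\varphi$ on $(0,2)$ would be continuous at $0$ with right derivative $-\infty$ there. A finite convex function on $\R$ cannot have an infinite one-sided derivative.

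Your truncation $\max(\varphi,-M)$ does not fix this. It leaves $-\sqrt{s}$ unchanged on $[0,2]$ once $M\ge\sqrt2$. The case it was meant for, unbounded $\varphi$, cannot occur: as you note, a real-valued convex function on $[0,2]$ is bounded. So the convex-order step still needs an approximation argument.

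The paper uses the piecewise linear interpolants $\varphi_n$ of the continuous version at the points $2j/n$. These are convex and extend to convex functions on $\R$ by continuing their end pieces. They also converge to $\varphi$ uniformly on $[0,2]$, so $\mathbb{E}\varphi_n(S)\le\mathbb{E}\varphi_n(2U)$ passes to the limit. Another fix also works: replace $\varphi$ on $[0,\varepsilon]$ and $[2-\varepsilon,2]$ by its tangent lines at $\varepsilon$ and $2-\varepsilon$, then let $\varepsilon\to0$ by bounded convergence. With either fix, your proof is the paper's.
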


\begin{proof}
A convex function on $[0,2]$ is continuous on $(0,2)$ and has finite limits at $0$ and $2$; changing $\varphi$ at the two endpoints affects none of the integrals below, because the laws involved have densities. So we may assume $\varphi$ continuous. Its piecewise linear interpolants $\varphi_n$ at the points $2j/n$ are convex, extend to convex functions on $\R$ by continuing their end pieces, and converge to $\varphi$ uniformly on $[0,2]$. Since $S\le_{\mathrm{cx}}2U$, $\mathbb{E}\varphi_n(S)\le\mathbb{E}\varphi_n(2U)$ for every $n$, hence $\mathbb{E}\varphi(S)\le\mathbb{E}\varphi(2U)=\frac12\int_0^2\varphi$. Let $g\le c$ be the density of $S$. From $N\le\varphi+(N-\varphi)^+$,
\[
\mathbb{E}N(S)\ \le\ \mathbb{E}\varphi(S)+\int_0^2(N-\varphi)^+g\ \le\ \frac12\int_0^2\varphi+c\int_0^2(N-\varphi)^+\ \le\ 2\int_0^1u.
\]
\end{proof}

\begin{prop}[Single steps at density two]\label{prop:singlestep}
Let $S$ have a density at most $2$ and satisfy $S\le_{\mathrm{cx}}2U$. For $0<\eta<1$ put $t=(8\eta-2)/3$ and $t^-=\max(0,-t)$. Then
\[
\mathbb{P}(S\ge2\eta)+\mathbb{P}(S\ge1+\eta)\ \le\ 2(1-\eta)-\frac{(t^-)^2}{4(1-\eta)}.
\]
In particular $S$ satisfies every single-step inequality \eqref{eq:onestep}. For $\eta\le1/4$ the bound holds for every $S$ with values in $[0,2]$, mean one, and a density at most $2$.
\end{prop}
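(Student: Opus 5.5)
The plan is to apply the certificate of \cref{lem:densitydual} with $c=2$ and the single-step potential $u=\ind{x\ge\eta}$. Here $N=u\,\square\,u=\ind{s\ge2\eta}+\ind{s\ge1+\eta}$ and $2\int u=2(1-\eta)$ (\cref{cor:readings}(iii)). To get the quantitative improvement I will not use the lemma as a black box but re-run its one-line proof:
\[
\mathbb{E}N(S)\le\mathbb{E}\varphi(S)+2\int_0^2(N-\varphi)^+\le\tfrac12\int_0^2\varphi+2\int_0^2(N-\varphi)^+ .
\]
Everything then reduces to exhibiting a convex $\varphi$ for which the right side is at most $2(1-\eta)-(t^-)^2/(4(1-\eta))$. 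When $\varphi$ is affine, the second inequality uses only $\mathbb{E}S=1$, since $\mathbb{E}\varphi(S)=\varphi(1)=\frac12\int_0^2\varphi$ for affine $\varphi$. This is exactly why the last sentence of the proposition (the range $\eta\le1/4$) needs only the mean and the density bound, and not convex order.

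\emph{Case $\eta\le1/4$ ($t\le0$).} I would take $\varphi$ affine, $\varphi(s)=\alpha+\beta s$. The penalty $2\int(N-\varphi)^+$ is then the area of at most three pieces: a triangle or trapezoid under the level-one step on $[2\eta,1+\eta)$, a triangle under the level-two step on $[1+\eta,2]$, and possibly a region near $s=2$. The cost $\frac12\int\varphi+2\int(N-\varphi)^+$ is a piecewise-quadratic function of $(\alpha,\beta)$. I would guess that the optimal line passes through $(2\eta,0)$ or is tangent to the staircase corners. I would then minimise over the one remaining parameter and check that the minimum equals $2(1-\eta)-\frac{(t^-)^2}{4(1-\eta)}$; the denominator $4(1-\eta)$ suggests that the optimal slope is of order $1/(1-\eta)$. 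A sanity check is that the bound degenerates to $2(1-\eta)$ exactly at $\eta=1/4$, where $t=0$.

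\emph{Case $\eta>1/4$ ($t>0$).} The claim is now only the plain single-step bound, and here convex order must be used. I would take $\varphi$ to be a hinge or a two-piece convex function, $\varphi(s)=\max\bigl(\ell_1(s),\ell_2(s)\bigr)$. Its first piece is the optimal line from the previous case, frozen at the value for $\eta=1/4$ or translated to stay below the lower step. Its second piece is steeper, so that it catches the jump at $1+\eta$. I would then verify $\frac12\int\varphi+2\int(N-\varphi)^+\le2(1-\eta)$ by direct computation of the triangle areas, as a function of $\eta\in(1/4,1)$.

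The main obstacle is finding these certificates explicitly and confirming that the areas come out exactly right, particularly for $\eta$ close to $1$. There the level-two step occupies a short interval $[1+\eta,2]$ and the budget $2(1-\eta)$ is small, so $\varphi$ must hug the staircase tightly.

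If the two-piece guess fails, I would fall back to solving the small linear program numerically for several values of $\eta$ and reading off the breakpoints of $\varphi$. The optimal $\varphi$ should have kinks at points where the constraint $2\int(N-\varphi)^+$ changes regime. I would also use the density-two construction of \cref{prop:densitysharp} at $c=2$ as a test case, since it attains equality: the certificate must be tight on that law, which pins down where $\varphi$ has to touch $N$.
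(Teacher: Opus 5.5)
Your framework is the paper's: \cref{lem:densitydual} with $c=2$ and $N=\ind{s\ge2\eta}+\ind{s\ge1+\eta}$, its chain rerun to keep the slack, and the remark that an affine $\varphi$ uses only $\mathbb{E}S=1$, which gives the last sentence. The genuine gap is the certificate itself. The paper's proof is nothing more than exhibiting one convex $\varphi$ and computing two areas, and you leave exactly that open. The shapes you guess also do not work.
\begin{itemize}
\item A line through $(2\eta,0)$ pays for the whole unit jump at $2\eta$ and for its negative part on $[0,2\eta)$. At $\eta=\frac14$ the best such line costs $\frac{\sqrt{105}}2-3\approx2.12$, worse than the trivial bound $2$.
\item The line through the corners $(2\eta,1)$ and $(1+\eta,2)$ costs $\frac{2-3\eta}{1-\eta}$, which exceeds $2(1-\eta)$ for every $\eta\le\frac14$.
\item For $\eta\ge\frac14$ there is no slack at all. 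The density equal to $\frac12$ on $[0,t]$ and to $2$ on $[2\eta,\frac{5\eta+1}3]\cup[1+\eta,\frac{2\eta+4}3]$ has mean one and is dominated by $2U$. It gives $\mathbb{P}(S\ge2\eta)+\mathbb{P}(S\ge1+\eta)=2(1-\eta)$; at $\eta=\frac14$ it is the law of \cref{prop:coupling}. So every inequality in the chain must be an equality for this law. Equality in $N\le\max(N,\varphi)$ where $g>0$ forces $\varphi\le N$ there. Equality in $\int(N-\varphi)^+g\le2\int(N-\varphi)^+$ where $g<2$ forces $\varphi\ge N$ there. Together these give $\varphi=N=0$ on $[0,t]$.
\end{itemize}
Hence, for $\eta>\frac14$, the first piece of your two-piece $\varphi$ must be the zero function. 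A line of positive slope, such as a translate of the $\eta=\frac14$ line, cannot work. The numerical LP fallback is not a proof.

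The missing idea is one hinge for all $\eta$: $\varphi(s)=(s-t)^+/(1-\eta)$ with $t=(8\eta-2)/3$.
\begin{itemize}
\item Its slope $1/(1-\eta)$ matches the distance between the two jumps of $N$. It crosses each jump two thirds of the way up: $\varphi(2\eta)=\frac23$ and $\varphi(1+\eta)=\frac53$.
\item Since $t\le2\eta$, we have $\varphi\ge0=N$ on $[0,2\eta)$.
\item $\varphi$ reaches $1$ at $\frac{5\eta+1}3\le1+\eta$ and $2$ at $\frac{2\eta+4}3\le2$. So $(N-\varphi)^+$ is two triangles of base $\frac{1-\eta}3$ and height $\frac13$, and $2\int_0^2(N-\varphi)^+=\frac{2(1-\eta)}9$.
\item If $t\ge0$, then $\frac12\int_0^2\varphi=\frac{(2-t)^2}{4(1-\eta)}=\frac{16(1-\eta)}9$, and the total is exactly $2(1-\eta)$.
\item If $t<0$, the same hinge is affine on $[0,2]$. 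Then $\frac12\int_0^2\varphi=\frac{1-t}{1-\eta}$, which is smaller by $\frac{t^2}{4(1-\eta)}$; that is the stated correction.
\end{itemize}
So your case $\eta\le\frac14$ uses this same $\varphi$ with its kink slid below $0$, not a separately optimized line. Your planned check that the affine minimum equals the stated bound would fail, though harmlessly. Already at $\eta=\frac1{10}$, shifting the same line up to cross each jump at height $\frac{13}{18}$ instead of $\frac23$ gives $\frac74$ rather than $\frac{79}{45}$. The stated bound is the value of this particular hinge, not an optimum.
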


\begin{proof}
Here $N(s)=\ind{s\ge2\eta}+\ind{s\ge1+\eta}$ on $[0,2]$ and $2\int u=2(1-\eta)$ (\cref{cor:readings}(iii)). Apply \cref{lem:densitydual} with $c=2$ and $\varphi(s)=(s-t)^+/(1-\eta)$. Since $t\le2\eta$, $\varphi\ge0=N$ on $[0,2\eta)$. The function $\varphi$ has slope $1/(1-\eta)$ on $[\max(t,0),2]$, with $\varphi(2\eta)=\frac23$, $\varphi\bigl(\frac{5\eta+1}3\bigr)=1$, $\varphi(1+\eta)=\frac53$, and $\varphi\bigl(\frac{2\eta+4}3\bigr)=2$, and $2\eta\le\frac{5\eta+1}3\le1+\eta\le\frac{2\eta+4}3\le2$. So $(N-\varphi)^+$ consists of two triangles, on $\bigl[2\eta,\frac{5\eta+1}3\bigr]$ and on $\bigl[1+\eta,\frac{2\eta+4}3\bigr]$, each with base $\frac{1-\eta}3$ and height $\frac13$, and $2\int_0^2(N-\varphi)^+=\frac{2(1-\eta)}9$. If $t\ge0$, then $\frac12\int_0^2\varphi=\frac{(2-t)^2}{4(1-\eta)}=\frac{16(1-\eta)}9$, because $2-t=\frac{8(1-\eta)}3$, and the hypothesis of \cref{lem:densitydual} holds with equality. If $t<0$, then $\varphi$ is affine on $[0,2]$, and $\frac12\int_0^2\varphi=\frac{1-t}{1-\eta}=\frac{(2-t)^2}{4(1-\eta)}-\frac{t^2}{4(1-\eta)}$; the chain of inequalities in the proof of \cref{lem:densitydual} gives $\mathbb{E}N(S)\le\frac12\int_0^2\varphi+2\int_0^2(N-\varphi)^+=2(1-\eta)-\frac{t^2}{4(1-\eta)}$, which is the claim. For $t\le0$, that is, $\eta\le1/4$, $\varphi$ is affine, so $\mathbb{E}\varphi(S)=\frac12\int_0^2\varphi$ follows from $\mathbb{E}S=1$ alone, and convex order is not needed.
\end{proof}

\Cref{prop:singlestep} is the case $r=1$, $c=2$ of the following certificate. For thresholds $0<\eta_1<\dots<\eta_r<1$, with $\eta_{r+1}=1$ as before, let $N=u\,\square\,u$ for the staircase $u$ with these thresholds, $B=2\sum_q(1-\eta_q)$, $d_j=\eta_{j+1}-\eta_j$ for $1\le j\le r$, and $D=1-\eta_1=\sum_jd_j$.

\begin{thm}[Nonincreasing threshold gaps]\label{thm:gaps}
Suppose $d_1\ge d_2\ge\dots\ge d_r$ (the last gap $d_r=1-\eta_r$ included) and $\frac12\le c\le1+D/d_1$; since $D\ge d_1$, this allows $c=2$. Then \eqref{eq:tailsgen} holds for every $S$ with a density at most $c$ and $S\le_{\mathrm{cx}}2U$. More precisely, with $\alpha=\frac{2D(2c-1)}{4cD+d_1}$ and $t=2\eta_1-\alpha d_1$, the convex function
\[
\varphi(s)=\frac{(s-t)^+}{d_1}+\sum_{j=2}^{r}\Bigl(\frac1{d_j}-\frac1{d_{j-1}}\Bigr)(s-2\eta_j)^+
\]
satisfies
\[
B-\frac12\int_0^2\varphi-c\int_0^2(N-\varphi)^+=\frac{D\,[D+(1-c)d_1]}{4cD+d_1}+\frac{\bigl((-t)^+\bigr)^2}{4d_1}\ \ge\ 0,
\]
which is positive for $c=2$ and $r\ge2$.
\end{thm}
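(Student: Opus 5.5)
The plan is to apply \cref{lem:densitydual} to the staircase $u$ with the given $\varphi$. The whole proof then reduces to computing $\frac12\int_0^2\varphi$ and $\int_0^2(N-\varphi)^+$ explicitly and checking the stated identity. Once the identity is in hand, the first term on the right is nonnegative because $c\le1+D/d_1$ gives $D+(1-c)d_1\ge0$. The second term is a square, and when $t<0$ it can be absorbed as in the proof of \cref{prop:singlestep}. For $c=2$ the first term equals $D(D-d_1)/(8D+d_1)$, which is positive as soon as $r\ge2$, since then $D>d_1$.

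\emph{Structure of $\varphi$.} The coefficients $1/d_j-1/d_{j-1}$ are nonnegative because the gaps are nonincreasing, so $\varphi$ is convex. It is piecewise linear with breakpoints $t,2\eta_2,\dots,2\eta_r$, and its slope on $[2\eta_j,2\eta_{j+1}]$ is $1/d_j$. I first check that $t\le2\eta_1$, i.e.\ $\alpha\ge0$, which holds since $c\ge\frac12$. Then $\varphi(2\eta_1)=\alpha$, and since $\varphi$ rises by exactly $1$ on each gap $[2\eta_j,2\eta_{j+1}]$, we get $\varphi(2\eta_j)=\alpha+j-1$, and on $[2\eta_r,2]$ the value climbs to $\alpha+r$ at $s=2$. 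By \cref{lem:tailsgen}, $N$ is the step function counting the $\xi_\ell\le s$. The key structural claim is that, for nonincreasing gaps, $\xi_{2j-1}=2\eta_j$ and $\xi_{2j}=\eta_j+\eta_{j+1}$. Indeed, among pairs with fixed $q+q'$, the sum $\eta_q+\eta_{q'}$ is maximized by the balanced pair, because concave-like spacing (decreasing increments) makes balanced sums largest. So $N$ jumps at $2\eta_1,\eta_1+\eta_2,2\eta_2,\dots,\eta_r+1$. Consequently $(N-\varphi)^+$ is supported near each jump point and consists of small triangles. On $[2\eta_j,2\eta_{j+1}]$ the line $\varphi$ starts at $\alpha+j-1$ while $N$ equals $2j-1$ and then $2j$. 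The excess is positive only if $\alpha<1$, which I will verify ($\alpha\le 2D(2c-1)/(4cD)<1$).

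\emph{Integrals.} $\frac12\int\varphi$ is a sum of explicit quadratics in $d_j$ and $t$. Writing $(s-t)^+$ with $t$ possibly negative gives the $((-t)^+)^2/(4d_1)$ correction, exactly as in \cref{prop:singlestep}. $\int(N-\varphi)^+$ is a sum of triangle areas: in gap $j$ with slope $1/d_j$, one triangle has height $1-\alpha$ (at $2\eta_j$, level $2j-1$ versus $\alpha+j-1$, shifted appropriately) and one at the midpoint jump. Each triangle has area proportional to $d_j$ times a function of $\alpha$ only, because the slope is $1/d_j$ and the heights depend only on $\alpha$. Summing over $j$ turns $\sum_j d_j$ into $D$, with $d_1$ singled out by the first segment from $t$. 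The resulting expression for $B-\frac12\int\varphi-c\int(N-\varphi)^+$ is then a quadratic in $\alpha$ with coefficients in $D,d_1,c$. The choice $\alpha=2D(2c-1)/(4cD+d_1)$ should be its maximizer, and substituting it gives the stated closed form. I would verify this by completing the square.

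\emph{Main obstacle.} The hardest step is the exact bookkeeping of where $N$ exceeds $\varphi$, and in particular the claim that the triangle heights are independent of $j$. This depends on $\varphi(2\eta_j)=\alpha+j-1$ relative to $N$ levels $2j-2$ or $2j-1$. I suspect that the relevant comparison is $N(s)-\varphi(s)$ with $N$ offset, e.g.\ the triangle at $\eta_j+\eta_{j+1}$ lying midway in the gap. I would first work out $r=1$ (matching \cref{prop:singlestep} at $c=2$, $\alpha=2/3$) and $r=2$ by hand to pin down the pattern before writing the general sum.
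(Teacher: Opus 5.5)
Your plan is the paper's: certify the staircase through \cref{lem:densitydual} with the given $\varphi$, and use the balanced-pair structure $\xi_{2j-1}=2\eta_j$, $\xi_{2j}=\eta_j+\eta_{j+1}$. The exchange identity $(\eta_{q+1}+\eta_{q'-1})-(\eta_q+\eta_{q'})=d_q-d_{q'-1}\ge0$ for $q<q'-1$ makes your heuristic precise; it needs $d_r=1-\eta_r$ in the monotone chain because $q'=r+1$ is allowed. Then compute both integrals exactly and maximize a quadratic in $\alpha$.

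But the step you call the main obstacle fails as you wrote it, because of an arithmetic slip. The interval $[2\eta_j,2\eta_{j+1}]$ has length $2d_j$ and $\varphi$ has slope $1/d_j$ on it, so $\varphi$ rises by $2$ there, not $1$. Thus $\varphi(2\eta_j)=\alpha+2j-2$, $\varphi(\eta_j+\eta_{j+1})=\alpha+2j-1$, and $\varphi(2)=\alpha+2r$. With your values $\alpha+j-1$, the excess at $2\eta_j$ would be $j-\alpha$, and for $j\ge2$ the triangles would not close inside the plateaus. With the correct values nothing is left to pin down. At each of the two jumps of $N$ in the $j$-th gap (to $2j-1$ at $2\eta_j$, to $2j$ at $\eta_j+\eta_{j+1}$), $N-\varphi$ starts at exactly $1-\alpha$ and falls with slope $1/d_j$. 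It therefore vanishes after $(1-\alpha)d_j\le d_j$, inside the plateau of length $d_j$; this is where $0\le\alpha<1$ is used. Hence $\int_0^2(N-\varphi)^+=D(1-\alpha)^2$.

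You also assert, without supplying it, that $B$ cancels, so that the margin is a quadratic in $\alpha$ with coefficients in $D,d_1,c$ alone. Put $L_j=1-\eta_j$, so that $L_1=D$, $L_{r+1}=0$ and $L_j-L_{j+1}=d_j$. Each hinge gives $\frac12\int_0^2(s-2\eta_j)^+\,ds=L_j^2$, and the first term gives $\frac{(2D+\alpha d_1)^2}{4d_1}-\frac{((-t)^+)^2}{4d_1}$. The telescoping identity
\[
\frac{D^2}{d_1}+\sum_{j=2}^r\Bigl(\frac1{d_j}-\frac1{d_{j-1}}\Bigr)L_j^2=\sum_{j=1}^r\frac{L_j^2-L_{j+1}^2}{d_j}=\sum_{j=1}^r(L_j+L_{j+1})=B-D
\]
then yields $\frac12\int_0^2\varphi=B-D+D\alpha+\frac{d_1\alpha^2}4-\frac{((-t)^+)^2}{4d_1}$. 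The margin is therefore $D(1-c)+D(2c-1)\alpha-\bigl(cD+\frac{d_1}4\bigr)\alpha^2+\frac{((-t)^+)^2}{4d_1}$, and completing the square in the first three terms gives the stated value.

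Two small corrections to your wording. The $t<0$ term is part of the exact integral of $\varphi$ and enters the margin with a plus sign, so nothing needs to be absorbed as in \cref{prop:singlestep}. And the chosen $\alpha$ maximizes only the first three terms, since $t$ itself depends on $\alpha$.
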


\begin{proof}
If $q<q'-1$, then $(\eta_{q+1}+\eta_{q'-1})-(\eta_q+\eta_{q'})=d_q-d_{q'-1}\ge0$, so the maxima defining $\xi_\ell$ are attained by balanced pairs: $\xi_{2j-1}=2\eta_j$ and $\xi_{2j}=\eta_j+\eta_{j+1}$, and the two plateaus of $N$ between $2\eta_j$ and $2\eta_{j+1}$ both have length $d_j$. The coefficients of $\varphi$ are nonnegative, $\varphi$ has slope $1/d_j$ on $[2\eta_j,2\eta_{j+1}]$, and $\varphi(2\eta_j)=2j-2+\alpha$, $\varphi(\eta_j+\eta_{j+1})=2j-1+\alpha$. For $c\ge\frac12$ we have $0\le\alpha<1$. So after each jump $N-\varphi$ starts at $1-\alpha$ and decreases with slope $1/d_j$, it vanishes within the plateau, and $\int_0^2(N-\varphi)^+=\sum_j2\cdot\frac12(1-\alpha)^2d_j=D(1-\alpha)^2$; below $2\eta_1$, $N=0\le\varphi$. With $L_j=1-\eta_j$, so that $L_j-L_{j+1}=d_j$, integrating the hinges gives
\[
\frac12\int_0^2\varphi=\frac{(2D+\alpha d_1)^2}{4d_1}+\sum_{j=2}^r\Bigl(\frac1{d_j}-\frac1{d_{j-1}}\Bigr)L_j^2-\frac{((-t)^+)^2}{4d_1}=B-D+D\alpha+\frac{d_1\alpha^2}4-\frac{((-t)^+)^2}{4d_1},
\]
by the telescoping identity $\frac{D^2}{d_1}+\sum_{j\ge2}(\frac1{d_j}-\frac1{d_{j-1}})L_j^2=\sum_j\frac{L_j^2-L_{j+1}^2}{d_j}=\sum_j(L_j+L_{j+1})=B-D$. The margin is therefore $D(1-c)+D(2c-1)\alpha-(cD+\frac{d_1}4)\alpha^2+\frac{((-t)^+)^2}{4d_1}$, the chosen $\alpha$ maximizes the sum of the first three terms, and substituting it gives the stated value (the last term is nonnegative; it depends on $\alpha$, so this $\alpha$ need not maximize the whole margin). It is nonnegative for $c\le1+D/d_1$, and positive for $c=2$ when $D>d_1$, that is, $r\ge2$. \Cref{lem:densitydual} finishes the proof.
\end{proof}

For equally spaced thresholds the cap of \cref{thm:gaps} is exact, and it grows with the number of steps; \cref{prop:densitysharp} is the case $r=1$.

\begin{thm}[Equally spaced thresholds]\label{thm:equal}
Let $r\ge1$, $0<d<1/r$ and $\eta_q=1-(r+1-q)d$ for $1\le q\le r$. If $\frac12\le c\le r+1$, then every $S$ with a density at most $c$ and $S\le_{\mathrm{cx}}2U$ satisfies \eqref{eq:tailsgen} for these thresholds. For every $c>r+1$, with $d=\frac{4cr+1}{2cr(2r+1)}$ and $\delta=\frac1{2cr}$, the law with density $c$ on the $2r$ intervals $[\xi_\ell,\xi_\ell+\delta]$ has mean one and is dominated by $2U$ in convex order, and it violates \eqref{eq:tailsgen} by $\frac{c-r-1}{2c(2r+1)}$.
\end{thm}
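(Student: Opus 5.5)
The first claim should follow from \cref{thm:gaps}, and the second from an explicit check. For $\eta_q=1-(r+1-q)d$ every gap, including the last one $d_r=1-\eta_r$, equals $d$. So the gaps are (weakly) nonincreasing, $d_1=d$, and $D=1-\eta_1=rd$. The admissible range $\frac12\le c\le1+D/d_1$ of \cref{thm:gaps} is then exactly $\frac12\le c\le r+1$, and that theorem (through \cref{lem:densitydual}) gives \eqref{eq:tailsgen} for every $S$ with density at most $c$ and $S\le_{\mathrm{cx}}2U$. The condition $d<1/r$ is only there to make $\eta_1>0$.

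For $c>r+1$ I would first compute the thresholds $\xi_\ell$. With equal gaps the maxima are attained by balanced pairs, as in the proof of \cref{thm:gaps}, so $\xi_{2j-1}=2\eta_j$ and $\xi_{2j}=\eta_j+\eta_{j+1}$. Hence $\xi_\ell=2-2rd+(\ell-1)d$, an arithmetic progression with step $d$.

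Next I would verify that the construction is a valid law. Each interval $[\xi_\ell,\xi_\ell+\delta]$ has mass $c\delta=1/(2r)$, so the total mass is one. The intervals are disjoint because $\delta<d$; this reduces to $1<4cr+1$ after clearing denominators. The top endpoint is $\xi_{2r}+\delta=2-d+\delta<2$, and $\xi_1=2-2rd>0$ is the condition $d<1/r$, which reduces to $1<2c$. The mean is the average of the midpoints,
\[
2-2rd+\tfrac{(2r-1)d}{2}+\tfrac{\delta}{2}=2-\tfrac{(2r+1)d}{2}+\tfrac{\delta}{2}.
\]
Setting this equal to $1$ gives $d=(2+\delta)/(2r+1)$, which is exactly the stated $d$.

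The violation is a direct count. We have $\mathbb{P}(S\ge\xi_\ell)=(2r+1-\ell)/(2r)$, so the left side of \eqref{eq:tailsgen} is $(2r+1)/2$. The right side is $2\sum_q(r+1-q)d=r(r+1)d$. Substituting $d$ should give a difference of $\frac{c-r-1}{2c(2r+1)}$.

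The main obstacle is convex order. I would follow the proof of \cref{prop:densitysharp} and set $\Delta=\pi_{2U}-\pi_S$. Below $\xi_1$ we have $\Delta(x)=x^2/4$, and above the last interval $\Delta(x)=(2-x)^2/4$. On each interval $\Delta''=\frac12-c<0$, so $\Delta$ is concave there and is minimized at an endpoint. On the gap after the $\ell$-th interval, $\pi_S$ is linear with slope $-(2r-\ell)/(2r)$. There $\Delta$ is a convex quadratic with minimum at $x=\ell/r$ when that point lies in the gap, and at a gap endpoint otherwise.

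So the work is to evaluate $\Delta$ at the finitely many candidate points: the gap endpoints and the points $x=\ell/r$. For this I would write $\pi_S$ explicitly as a sum over intervals, using that the mean is one. I expect the inequality at each candidate to reduce, after substituting $d$ and $\delta$, to a polynomial inequality in $c$, $r$ and $\ell$ that holds whenever $c>r+1$, or even $c>\frac12$. If the general-$\ell$ algebra is unwieldy, I would first handle the symmetric middle gap, where the minimum is at $x=1$ as in \cref{prop:densitysharp}, then show that $\Delta$ at $x=\ell/r$ is monotone in $\ell$ toward the ends, or convex in $\ell$.
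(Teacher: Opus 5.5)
Your first claim (\cref{thm:gaps} with $D/d_1=r$) and your bookkeeping for the construction match the paper's proof: the progression $\xi_\ell$, the masses, the mean, and the count $\frac{2r+1}2-r(r+1)d=\frac{c-r-1}{2c(2r+1)}$. One small slip: $\delta<d$ reduces to $2r+1<4cr+1$, that is, $c>\frac12$, not to $1<4cr+1$; the conclusion is unaffected.

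The real difference is the convex-order step. The paper computes no stop-loss function. It writes $S=1+d\bigl(\Lambda-\frac{2r+1}2\bigr)+\delta V$, with $\Lambda$ uniform on $\{1,\dots,2r\}$ and $V$ uniform on $[-\frac12,\frac12]$, independent. It then notes that the same expression with $d$ and $\delta$ both replaced by $\frac1r$ is uniform on $[0,2]$. Two facts finish it: $aX\le_{\mathrm{cx}}X$ for centered $X$ and $0\le a\le1$, and convex order is stable under independent sums. Here $rd<1$ and $r\delta=\frac1{2c}<1$. That argument needs no computation, and it shows that only the two contraction factors matter, so domination holds for every $c>\frac12$.

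Your direct route, modelled on \cref{prop:densitysharp}, also works, but you stopped at ``I expect''. It closes in a few lines.

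From $(2r+1)d=2+\delta$ you get $\xi_\ell=\ell d-\delta$. So the $\ell$-th interval is $[\ell d-\delta,\ell d]$, and the gap after it is $[\ell d,(\ell+1)d-\delta]$.

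The critical point $\ell/r$ always lies inside that gap, for $1\le\ell\le2r-1$. Indeed $\ell/r-\ell d=\ell(\frac1r-d)>0$, and $(\ell+1)d-\delta-\frac\ell r=\frac{(2r-\ell)(1-r\delta)}{r(2r+1)}>0$. So the endpoint case never arises.

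On that gap $\pi_S(x)=\frac{2r-\ell}{2r}\bigl(1+\frac{\ell d}2-x\bigr)$, and
\[
\pi_{2U}(\ell/r)-\pi_S(\ell/r)=\frac{\ell(2r-\ell)}{4r}\Bigl(\frac1r-d\Bigr)>0 .
\]
For $r=1$ this is the value $\frac{2c-1}{24c}$ found in \cref{prop:densitysharp}. Combined with your outer pieces $x^2/4$ and $(2-x)^2/4$ and concavity on the intervals, this gives $S\le_{\mathrm{cx}}2U$.

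Your route is more hands-on, but it yields the exact minimum of the stop-loss gap on every gap. That is a quantitative margin the paper's argument does not display.
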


\begin{proof}
All gaps equal $d$, so the first statement is \cref{thm:gaps} with $D/d_1=r$. Let $c>r+1$. Here $\xi_\ell=2-(2r+1-\ell)d$, and $(2r+1)d=2+\delta$; both $rd<1$ and $\delta<d$ are equivalent to $c>\frac12$. So the intervals are disjoint subintervals of $(0,2)$, each of mass $c\delta=1/(2r)$, and $N=\ell$ almost everywhere on the $\ell$-th of them. Let $\Lambda$ be uniform on $\{1,\dots,2r\}$ and $V$ uniform on $[-\frac12,\frac12]$, independent. Then $S=1+d(\Lambda-\frac{2r+1}2)+\delta V$ has the stated law, while $1+\frac1r(\Lambda-\frac{2r+1}2)+\frac1rV$ is uniform on $[0,2]$. Since $aX\le_{\mathrm{cx}}X$ for centered $X$ and $0\le a\le1$, and convex order is preserved under adding independent variables, the contraction factors $rd<1$ and $r\delta=\frac1{2c}<1$ give $S\le_{\mathrm{cx}}2U$. Finally $\mathbb{E}N(S)=\frac{2r+1}2$ and $B=r(r+1)d$, whose difference is $\frac{c-r-1}{2c(2r+1)}$.
\end{proof}

Since the margin of \cref{thm:gaps} at $c=2$ is positive for $r\ge2$, the same $\varphi$ also certifies every threshold tuple close enough to an equally spaced one (moving each threshold by at most $\varepsilon$ changes $B$ and the certificate value by at most $10r\varepsilon$ in total). For two steps, \cref{thm:gaps} covers only $b-a\ge1-b$; the following theorem covers all of them.

\begin{thm}[Two steps at density two]\label{thm:twostep}
Let $0<a<b<1$. Every $S$ with a density at most $2$ and $S\le_{\mathrm{cx}}2U$ satisfies
\[
\begin{aligned}
&\mathbb{P}(S\ge2a)+\mathbb{P}(S\ge a+b)+\mathbb{P}\bigl(S\ge\max(1+a,2b)\bigr)+\mathbb{P}(S\ge1+b)\\
&\qquad\le\ 2(1-a)+2(1-b)-\frac{\min(b-a,\,1-b)}{21}.
\end{aligned}
\]
\end{thm}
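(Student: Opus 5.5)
We will use \cref{lem:densitydual} with $c=2$ for the two-step staircase with thresholds $a<b$, where $\eta_3=1$, $d_1=b-a$, $d_2=1-b$, and $D=1-a$. The four tail thresholds are $\xi_1=2a$, $\xi_2=a+b$, $\xi_3=\max(1+a,2b)$, and $\xi_4=1+b$. The right side is $B=2(1-a)+2(1-b)$. It suffices to find a convex $\varphi$ with
\[
\tfrac12\int_0^2\varphi+2\int_0^2(N-\varphi)^+\le B-\tfrac{\min(d_1,d_2)}{21}.
\]
The chain in the proof of \cref{lem:densitydual} then gives the inequality with that margin. We split into two regimes according to the order of the gaps.

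\emph{Case $d_1\ge d_2$.} Here the gaps are nonincreasing, so $\xi_3=2b$ and \cref{thm:gaps} applies with $r=2$, $c=2$. Its margin is $\frac{D(D-d_1)}{8D+d_1}=\frac{(1-a)(1-b)}{8(1-a)+(b-a)}$, up to the nonnegative $(-t)^+$ term. Since $1-a\ge b-a$, the denominator is at most $9(1-a)$. So the margin is at least $(1-b)/9\ge d_2/21=\min(d_1,d_2)/21$.

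\emph{Case $d_1<d_2$.} Now $\xi_3=1+a$, so $N$ has plateaus of lengths $d_1,\,1-b,\,d_1,\,1-b$ between $2a,\,a+b,\,1+a,\,1+b$. We take a piecewise linear convex $\varphi$ of the same type as in \cref{thm:gaps}:
\[
\varphi(s)=\lambda_1(s-t)^++\lambda_2(s-s_2)^+.
\]
It starts below $N$ at $2a$ with offset $\alpha$ and has slope $1/d_1$ initially. The second hinge sits somewhere in $[a+b,1+a]$, and the slope after it is $\sim 1/d_2$, chosen so that $\varphi$ climbs by one unit across each later plateau of length $1-b$ with the same offset. Each jump of $N$ then leaves a triangle of height $1-\alpha$ over which $(N-\varphi)^+$ is integrated. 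The triangles are slanted differently on the short and long plateaus, and they must fit inside the plateaus.

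We compute $\frac12\int\varphi$ by the hinge integration and telescoping used in the proof of \cref{thm:gaps}. The margin is then a quadratic in $\alpha$, and possibly in the position of the second hinge, with coefficients in $d_1,d_2$. We optimize it, or simply pick rational values of the parameters, and bound the result below by $\min(d_1,d_2)/21=d_1/21$.

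\textbf{Main obstacle.} The hardest step is this second case, where the gaps increase and the balanced-pair structure of \cref{thm:gaps} fails. A single slope per plateau pair may not fit all four triangles, for two reasons. First, the plateau $[a+b,1+a)$ has length $1-b>d_1$, while the convexity-forced slope comes from the short gap. Second, the jump at $1+a$ is preceded by a long plateau. If the two-hinge $\varphi$ loses too much, we would first try a third hinge placed at $1+a$. We would also let the offsets after each jump differ, minimizing the resulting explicit quadratic form. The constant $\frac1{21}$ suggests the worst case is a ratio like $d_1/d_2$ at an endpoint, which we would check by treating the margin as a function of $d_1/d_2\in(0,1)$ and $1-a$.
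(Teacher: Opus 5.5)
Your first case is correct. When $b-a\ge1-b$ the gaps are nonincreasing and $\xi_3=2b$. Then \cref{thm:gaps} with $r=2$, $c=2$ gives the margin $\frac{(1-a)(1-b)}{8(1-a)+(b-a)}\ge\frac{1-b}{9}$.

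The gap is the case $b-a<1-b$, which is where the constant is actually decided. The paper splits by $\rho=(b-a)/(1-b)$ into three ranges, each with an explicit convex certificate, and its smallest margin, $\frac1{21}$, occurs at $\rho=\frac34$, inside your unfinished case. There you neither write down a certificate nor compute a margin, and the certificate you sketch is not admissible:
\begin{itemize}
\item It has slope $1/d_1$ before the second hinge and slope about $1/d_2$ after it. Since $d_1<d_2$ the slope decreases, so $\lambda_2<0$ and $\varphi$ is not convex. \Cref{lem:densitydual} then does not apply, because its step $\mathbb{E}\varphi(S)\le\frac12\int_0^2\varphi$ uses convexity.
\item Even apart from convexity, slope $1/d_2$ climbs only $d_1/d_2<1$ across the short plateau $[1+a,1+b)$, so the offsets cannot stay equal after the jump at $1+a$.
\end{itemize}
Your fallbacks (a third hinge, unequal offsets, optimizing or guessing parameters) are a search plan. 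Nothing shows that any of them reaches $(b-a)/21$.

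The missing idea is this. With increasing gaps, convexity forbids following the plateaus one at a time, so the certificate should follow the average rate of $N$. That rate is $2$ per consecutive pair of plateaus, whose total length is $D=1-a$. A single hinge then suffices. Take $\varphi(s)=\frac2D(s-t)^+$ with $\varphi(2a)=\alpha$. On $[2a,2]$ both $N$ and $\varphi$ increase by $2$ under the shift $s\mapsto s+D$, so $N-\varphi$ is $D$-periodic there. Also $\frac12\int_0^2\varphi\le\frac D2\bigl(2+\frac\alpha2\bigr)^2$, and the margin is an explicit piecewise quadratic in $\alpha$. Put $q=(1-b)/(1-a)\in(\frac12,1)$.
\begin{itemize}
\item For $q\le\frac{13}{16}$, the choice $\alpha=\frac{4(1+4q)}{17}$ gives the margin $(1-a)\frac{-36q^2+50q-15}{17}$.
\item For $q\ge\frac{13}{16}$, the choice $\alpha=\frac{4(4q-1)}{9}$ gives the margin $\frac{2(b-a)(2q+1)}{9}$.
\end{itemize}
Both are at least $\frac2{17}(b-a)>\frac{b-a}{21}$, and at $q=\frac12$ this is exactly the certificate of \cref{thm:gaps}. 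A computation of this kind, or the paper's three-range certificates, is what your second case needs; as written it is not proved.
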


The proof (Section~S4.2) splits by the ratio $\rho=(b-a)/(1-b)$ of the two gaps and writes down an explicit convex certificate for each of three ranges of $\rho$. The constant $\frac1{21}$ is the smallest margin these certificates give, reached at $\rho=\frac34$. The law of \cref{prop:densitysharp} at $c=2$ is attainable, so the single-step bound of \cref{prop:singlestep} holds with equality at $\eta=\frac14$.

\begin{prop}[An attainable boundary law]\label{prop:coupling}
Let $X$ be uniform on $[0,1]$ and, given $X=x$, let $Y=\frac34-2x$ if $x\le\frac14$; $Y=\frac38-\frac x2$ or $Y=\frac98-\frac x2$, with probability $\frac12$ each, if $\frac14<x\le\frac34$; and $Y=\frac94-2x$ if $x>\frac34$. Then $Y$ is uniform on $[0,1]$, and $X+Y$ has density $2$ on $[\frac12,\frac34]\cup[\frac54,\frac32]$, which is the law of \cref{prop:densitysharp} with $c=2$ ($\eta=h=\frac14$). It satisfies $\mathbb{P}(S\ge\frac12)+\mathbb{P}(S\ge\frac54)=\frac32=2(1-\frac14)$.
\end{prop}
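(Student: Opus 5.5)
The plan is to verify the three claims of \cref{prop:coupling} in order: $Y$ is uniform, $S=X+Y$ has the stated density, and the tail identity holds. Each claim is a change-of-variables computation on the three pieces of $x$.

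\emph{Uniformity of $Y$.} I would push each branch forward separately. On $x\in[0,\frac14]$ the map $x\mapsto\frac34-2x$ sends $X$ (density $1$) onto $[\frac14,\frac34]$, with density $\frac12$. On $x\in(\frac34,1]$ the map $x\mapsto\frac94-2x$ also lands in $[\frac14,\frac34)$, with density $\frac12$. On $(\frac14,\frac34]$ each branch has slope $-\frac12$ and weight $\frac12$. The branch $\frac38-\frac x2$ covers $[0,\frac14)$ with density $\frac12\cdot2=1$, and the branch $\frac98-\frac x2$ covers $[\frac34,1)$ with density $1$. Adding the pieces gives density $1$ on $[0,1]$, up to null endpoints, so $Y\sim U[0,1]$.

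\emph{Law of $S$.} Next I compute $S=X+Y$ on each piece.
\begin{itemize}
\item For $x\le\frac14$, $S=\frac34-x\in[\frac12,\frac34]$, with density $1$.
\item For $x>\frac34$, $S=\frac94-x\in[\frac54,\frac32)$, with density $1$.
\item In the middle, the two branches give $S=\frac38+\frac x2\in(\frac12,\frac34]$ and $S=\frac98+\frac x2\in(\frac54,\frac32]$. Each has slope $\frac12$ and weight $\frac12$, so each contributes density $1$.
\end{itemize}
In total $S$ has density $2$ on $[\frac12,\frac34]\cup[\frac54,\frac32]$. This agrees with \cref{prop:densitysharp} at $c=2$, where $\eta=\frac{3}{12}=\frac14$ and $h=\frac14$: the intervals there are $[2\eta,2\eta+h]=[\frac12,\frac34]$ and $[1+\eta,1+\eta+h]=[\frac54,\frac32]$.

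\emph{Tail identity.} We have $\mathbb{P}(S\ge\frac12)=1$ and $\mathbb{P}(S\ge\frac54)=\frac12$, so the sum is $\frac32=2(1-\frac14)$. This is equality in \eqref{eq:onestep}. It also shows that the bound of \cref{prop:singlestep} is tight at $\eta=\frac14$, where $t=0$.

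Nothing here is a real obstacle. The only care needed is the bookkeeping of densities: the slope-$\frac12$ branches carry weight $\frac12$, so their Jacobian factor $2$ and the weight cancel to give density $1$. The endpoint conventions affect only null sets.
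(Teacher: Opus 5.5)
Your proposal is correct and follows the paper's proof: push each of the four affine branches forward, with density equal to the branch probability divided by the absolute slope, add the contributions to get density $1$ for $Y$ on $[0,1]$ and density $2$ for $X+Y$ on $[\frac12,\frac34]\cup[\frac54,\frac32]$, and read off the two tail probabilities. Your side remark that this gives equality in \cref{prop:singlestep} at $\eta=\frac14$, where $t=0$, also matches the comment the paper makes just before the proposition.
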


\begin{proof}
A branch $y=\beta+\gamma x$ taken with conditional probability $\pi$ contributes density $\pi/|\gamma|$ on its image. The four branches give $Y$ the densities $\frac12$ on $[\frac14,\frac34]$, $1$ on $[0,\frac14]$, $1$ on $[\frac34,1]$, and $\frac12$ on $[\frac14,\frac34]$, which add up to $1$ on $[0,1]$; and they give $X+Y=\frac34-x$, $\frac38+\frac x2$, $\frac98+\frac x2$, $\frac94-x$ the density $1$ each on $[\frac12,\frac34]$, $[\frac12,\frac34]$, $[\frac54,\frac32]$, $[\frac54,\frac32]$.
\end{proof}

With \cref{prop:densitysharp}, the largest constant $c$ such that a density at most $c$ together with $S\le_{\mathrm{cx}}2U$ implies every single-step inequality \eqref{eq:onestep} is exactly $2$. Whether these two conditions decide membership is open.

\begin{prob}\label{prob:density}
Let $S$ have a density at most $2$ and satisfy $S\le_{\mathrm{cx}}2U$. Is $\law(S)\in\DU$?
\end{prob}

The certificates of \cref{lem:densitydual} are complete, and \cref{prob:density} reduces to extreme densities, as follows. Call a law \emph{attainable} if it belongs to $\DU$.

\begin{prop}[Duality and reduction]\label{prop:duality}
Let $c\ge\frac12$, let $\mathcal{H}_c$ be the set of densities $g$ on $[0,2]$ with $0\le g\le c$ and $g(s)\,ds\le_{\mathrm{cx}}U[0,2]$, and for $g\in\mathcal{H}_c$ let $D_g(t)=\frac{(2-t)^2}4-\int_t^2(s-t)g(s)\,ds\ge0$.
\begin{enumerate}[label=(\alph*)]
\item For bounded measurable $N$ on $[0,2]$, $\max_{g\in\mathcal{H}_c}\int Ng=\min_\varphi\bigl[\frac12\int_0^2\varphi+c\int_0^2(N-\varphi)^+\bigr]$, the minimum over continuous convex $\varphi$ on $[0,2]$; both are attained.
\item Feasible $g$ and $\varphi$ are optimal if and only if $\int_{(0,2)}D_g\,d\varphi''=0$, $g=c$ a.e.\ on $\{N>\varphi\}$, and $g=0$ a.e.\ on $\{N<\varphi\}$. In particular $\varphi$ is affine on each component of $\{D_g>0\}$.
\item $g=\frac12$ a.e.\ on $Z_g=\{D_g=0\}$. For each component $(s_0,s_1)$ of $[0,2]\setminus Z_g$, with $p=(s_1-s_0)/2$, the density $\tilde g(t)=g(s_0+pt)$ lies in $\mathcal{H}_c$, and $D_{\tilde g}(t)=p^{-2}D_g(s_0+pt)>0$ for $0<t<2$. If all these laws $\tilde g(t)\,dt$ are attainable, so is $g(s)\,ds$.
\item $g$ is an extreme point of $\mathcal{H}_c$ if and only if $g\in\{0,c\}$ a.e.\ on $\{D_g>0\}$. Consequently every law in $\mathcal{H}_c$ is attainable if and only if every $g\in\mathcal{H}_c$ with $g\in\{0,c\}$ a.e.\ and $D_g>0$ on $(0,2)$ is attainable.
\end{enumerate}
\end{prop}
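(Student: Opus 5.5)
Parts (a)--(b) are an infinite-dimensional LP duality, and (c)--(d) follow from the complementary slackness conditions of (b).

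\emph{Weak duality in (a).} This is the chain in the proof of \cref{lem:densitydual}. For $g\in\mathcal{H}_c$ and convex $\varphi$, convex order gives $\int\varphi g\le\frac12\int\varphi$, and pointwise
\[
N g\le\varphi g+(N-\varphi)^+g\le\varphi g+c(N-\varphi)^+ .
\]
Integrating gives $\int Ng\le\frac12\int_0^2\varphi+c\int_0^2(N-\varphi)^+$.

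\emph{Attainment and strong duality in (a).} On the primal side, $\mathcal{H}_c$ is convex and weak-$*$ compact in $L^\infty$. The reason is that the convex-order constraint is $D_g\ge0$ together with the mean condition, and both are linear in $g$ and tested against continuous functions. Hence the linear functional $g\mapsto\int Ng$ attains its maximum.

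For the dual I would write the primal as maximizing $\int Ng$ over $0\le g\le c$ subject to $\int(s-t)^+g\le\pi_{2U}(t)$ for all $t$, plus the mass and mean equalities. A Lagrangian with a nonnegative measure $\mu$ on the hinge constraints and affine multipliers gives $\varphi=\text{affine}+\int(s-t)^+\mu(dt)$, which is exactly a continuous convex function. The dual value is then $\frac12\int\varphi+c\int(N-\varphi)^+$. I would justify the absence of a duality gap by a Hahn--Banach / Rockafellar-type argument. A Slater point is available: the density $\frac12$ itself, perturbed inward so that $D_g>0$ on $(0,2)$, which needs $c\ge\frac12$. Dual attainment needs a compactness argument on minimizing sequences $\varphi_n$. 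The slopes are bounded because a steep $\varphi$ makes $\frac12\int\varphi$ large while $(N-\varphi)^+\ge0$, so Arzel\`a--Ascoli applies on compact subintervals, and the boundary behaviour needs care. This is the step I expect to be the main technical obstacle.

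\emph{Complementary slackness in (b).} Equality throughout the weak-duality chain is exactly the stated conditions:
\begin{itemize}
\item $\int\varphi g=\frac12\int\varphi$ is equivalent to $\int D_g\,d\varphi''=0$, after integrating by parts twice with the equal-mean boundary terms;
\item $g=c$ a.e.\ where $N>\varphi$;
\item $g=0$ a.e.\ where $N<\varphi$.
\end{itemize}
Since $D_g\ge0$ and $\varphi''\ge0$, the measure $\varphi''$ vanishes where $D_g>0$, so $\varphi$ is affine on each component of $\{D_g>0\}$.

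\emph{Part (c).} $D_g$ is $C^1$ with $D_g''=\frac12-g$ a.e. At points where $D_g=0$, the nonnegativity of $D_g$ forces $D_g'=0$ there. Hence $D_g''=0$ a.e.\ on $Z_g$ (its derivative vanishes a.e.\ on the level set), and $g=\frac12$ a.e.\ on $Z_g$.

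On a component $(s_0,s_1)$ we have $D_g(s_0)=D_g(s_1)=0$ and $D_g'(s_0)=D_g'(s_1)=0$. So the restricted law has the same mass and mean as a uniform density $\frac12$ on the component, and the rescaling computation gives $D_{\tilde g}=p^{-2}D_g(s_0+p\,\cdot)$ with mass $1$.

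To glue, I would combine couplings. The parts with density $\frac12$ on $Z_g$ can be matched by the comonotone coupling $S=2X$ restricted to $X\in Z_g/2$. Each component $(s_0,s_1)$ receives the $X$-interval $[s_0/2,s_1/2]$, and an attainable $\tilde g$, rescaled by $p$ and shifted, provides a coupling of the two uniform marginals on that subinterval.

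\emph{Part (d).} Suppose $g\notin\{0,c\}$ on a positive-measure subset of $\{D_g>0\}$. Then I would perturb by $\pm\epsilon h$, with $h$ supported there, orthogonal to $1$ and $s$, and small. This keeps $D_g\ge0$ near the support, so $g$ is not extreme. I would handle compact subsets first, using continuity of $D_g$.

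Conversely, on $Z_g$ any decomposition $g=\frac12(g_1+g_2)$ forces $D_{g_i}=0$ there, hence $g_i=\frac12$ there. Off $Z_g$ the values $0$ and $c$ are extreme pointwise, so $g$ is extreme.

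The consequence uses Krein--Milman (Choquet) together with convexity and weak closedness of $\DU$, and (c) to reduce to the case $D_g>0$ on $(0,2)$. Within that case, extremality makes $g\in\{0,c\}$ a.e.
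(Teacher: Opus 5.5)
\textbf{Verdict: there is a gap in part (a); parts (b)--(d) are essentially right.} In (b)--(d), equality in the weak-duality chain gives the three slackness conditions, and $g=\frac12$ a.e.\ on $Z_g$ follows from $D_g'=0$ on $Z_g$. The rescaling identity, the gluing of couplings and the extremality argument also work. The problem is (a), on which the ``only if'' half of (b) rests.

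\emph{The Slater point does not exist.} Every $g\in\mathcal{H}_c$ has $D_g(0)=D_g(2)=0$ and $D_g'(0)=D_g'(2)=0$. So $D_g$ is never an interior point of the nonnegative cone of $C[0,2]$. Dropping $t=0,2$ does not help, because the nonnegative cone of $C_0(0,2)$ has empty interior. At $c=\frac12$ the feasible set is $\{\frac12\}$ with $D\equiv0$, so even your weaker version needs $c>\frac12$.

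The missing observation is that no constraint qualification is needed, because the compactness is on the primal side. Put $L(g,\varphi)=\int_0^2(N-\varphi)g+\frac12\int_0^2\varphi$ on the weak-$*$ compact convex box $\{0\le g\le c\}$ times the cone of continuous convex $\varphi$. It is affine in each variable and weak-$*$ continuous in $g$, so Sion's minimax theorem gives $\max_g\inf_\varphi L=\inf_\varphi\max_g L$. The inner infimum is $\int Ng$ for $g\in\mathcal{H}_c$ and $-\infty$ otherwise: scale a convex $\varphi$ with $\int\varphi g>\frac12\int\varphi$. The inner maximum is $\frac12\int\varphi+c\int(N-\varphi)^+$. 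The paper likewise describes its proof of (a) as a minimax argument.

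\emph{Your reason for dual attainment is also false.} The function $\varphi(s)=M(s-1)$ has $\frac12\int_0^2\varphi=0$ for every $M$. For $c=\frac12$ and $N\ge0$, every $\varphi_n(s)=n(s-2)$ is optimal, so minimizing sequences need not be bounded at all.

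What gives compactness is truncation. For $c\ge\frac12$, replacing $\varphi$ by $\max(\varphi,-\|N\|_\infty)$ keeps it convex and does not increase the dual objective. Raising $\varphi$ by $\delta$ where $\varphi<-\|N\|_\infty$ adds $\frac12\delta$ to the first term and removes $c\delta$ from the second.

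A truncated minimizing sequence is bounded below and has $\int\varphi$ bounded above. Hence it is bounded in $L^1$, and therefore locally bounded and locally Lipschitz on $(0,2)$. A locally uniform limit has objective at most the infimum, by Fatou's lemma and dominated convergence.

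This limit is only convex on $(0,2)$, so blow-up at $0$ or $2$ must still be excluded. Near such an endpoint, replace it by a tangent line at a point where it exceeds $\|N\|_\infty$. This strictly lowers $\int\varphi$ and leaves $(N-\varphi)^+$ unchanged, which would beat the infimum. So the limit extends to a continuous convex minimizer on $[0,2]$.

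\emph{Two smaller points.}
\begin{enumerate}
\item In (d), take $h$ in a compact subinterval of a single component of $\{D_g>0\}$. Orthogonality to $1$ and $s$ makes $\int_t^2(s-t)h(s)\,ds$ vanish outside the convex hull of $\operatorname{supp}h$, not outside a neighborhood of it. If $h$ straddles a point of $Z_g$, positivity can fail there.
\item In the consequence, apply Krein--Milman to $\mathcal{H}_c$ itself, then apply (c) to each extreme point $e$. Its rescaled components are $\{0,c\}$-valued with positive gap, because $e\in\{0,c\}$ off $Z_e$. You cannot first reduce to $D_g>0$ on $(0,2)$, because that subclass is not closed.
\end{enumerate}
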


The proof, a minimax argument followed by a Krein--Milman reduction, is in Section~S4.3.

\Cref{prob:density} therefore reduces to bang-bang laws: densities with values in $\{0,2\}$ whose stop-loss gap $D_g$ is positive on $(0,2)$. Part (a) also says what a positive answer would take. By \cref{thm:tails}(iii) only distinct thresholds matter, and a staircase inequality holds on $\mathcal{G}_2$ exactly when some convex $\varphi$ satisfies the hypothesis of \cref{lem:densitydual}. \Cref{prop:singlestep,thm:gaps,thm:equal,thm:twostep} build such certificates by hand, and on the grids of spacing $1/8$, $1/12$, and $1/16$ there is one for every staircase with distinct thresholds (Section~S5). What is missing is a uniform construction. The obvious candidate, summing the single-step certificates, already fails for five thresholds (Section~S5).

\section{Three-atom laws and single steps}\label{sec:windows}

This section applies the lattice reduction (\cref{thm:B}) to the three-point laws studied by Mao et al.\ \cite{MaoWangWang2019}, and shows that in a range of spacings their obstructions are single-step bounds. Equidistant four-atom laws are described completely in Section~S3, through a link between one-sided windows of displacements and Gomory's master knapsack polytope.

Reverse the columns: with $\tau=(m-1)-\sigma$, the sum $i+\sigma(i)$ equals $(m-1)+(i-\tau(i))$, so a point of $P(m)$ is a histogram of displacements $i-\tau(i)$. If the histogram vanishes outside $\{-1,0,1\}$, so does every entry of the matrix off that band, because entries are nonnegative; and the permutations on the band are the products of disjoint adjacent transpositions. It follows that the points of $P(m)$ on the displacements $\{-1,0,1\}$ are $\{(s,1-2s,s):0\le s\le\lfloor m/2\rfloor/m\}$, and the middle mass is at least $(m\bmod2)/m$.

\begin{cor}[Three equidistant atoms]\label{cor:W3}
Let $F$ be a law with mean one on $\{a-b,a,a+b\}$, where $0<b\le a\le1$, with masses $(f_1,f_2,f_3)$. Let $L=1/b$, $n=\lfloor L\rfloor$, and $\theta=\frc(L)$ as in \cref{thm:B}.
\begin{enumerate}[label=(\roman*)]
\item If $a=1$ (so that $f_1=f_3$ by the mean), then $F\in\DU$ if and only if $f_2\ge\operatorname{dist}(L,2\Z)/L$.
\item If $a<1$ and $a+b<1$, no law on the support has mean one.
\item If $a<1$ and $a+b=1$, the only mean-one law on the support is $\delta_1$, and it belongs to $\DU$ for every $b$.
\item If $a<1<a+b$, then $F\in\DU$ if and only if $k=1/b\in\N$ and, with $C=(a+b-1)/b=\frc(a/b)\in(0,1)$, $f_2\in[C\,(k\bmod 2)/k,\ C]$; the mean then forces $f_1=(C-f_2)/2$ and $f_3=1-(C+f_2)/2$. For even $k$ the interval is $[0,C]$; for odd $k$ it is $[a+b-1,C]$.
\end{enumerate}
\end{cor}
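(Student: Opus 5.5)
The plan is to read every case off the fibered reduction. The support $\{a-b,a,a+b\}$ lies in the lattice $a+b\Z$, so $F\in\DU$ if and only if $F\in\cG(b,a)$. \Cref{thm:B} then writes the rescaled laws $\law(S/b-c)$ as a Minkowski combination of the polytopes $P(m)$ and their shifts $P(m)+1$, and the band description proved just before the corollary identifies which points of each polytope live on three consecutive integers.

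One fact is used repeatedly. A point of $P(m)$ is the law of $(m-1)+(\text{displacement})$, so it is centred at $m-1$. If a mixture of points from these polytopes (with positive weights) is supported on three consecutive integers, then every component is supported there, because all components are nonnegative. A component from $P(m)$ supported on $\{m-2,m-1,m\}$ therefore has the form $(s,1-2s,s)$ with $0\le s\le\lfloor m/2\rfloor/m$. Its middle mass thus ranges exactly over $[(m\bmod2)/m,\,1]$.

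Cases (ii) and (iii) are only about the mean. If $a+b<1$, every atom is below $1$, so no law on the support has mean one. If $a+b=1$ with $a<1$, the atoms are at most $1$ and the mean is one only for $\delta_1$, which lies in $\DU$ via the coupling $Y=1-X$.

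For (i), $1=a\in a+b\Z$, so $\cG(b,1)\ne\emptyset$ by \cref{thm:B}(i). I will treat the cases $\theta=0$ and $\theta>0$ separately.
\begin{itemize}
\item If $\theta=0$, then $L=n$ and $c=\frc(L)=0$. \Cref{thm:B}(ii) gives the set $P(n)+1$, which is centred at $n$, and $S/b$ lives on $\{n-1,n,n+1\}$. The band fact gives exactly $f_2\ge(n\bmod2)/n=\operatorname{dist}(L,2\Z)/L$.
\item If $\theta>0$, \cref{thm:B}(iii) gives the mixture $\frac{\theta(n+1)}{L}P(n+1)+\frac{(1-\theta)n}{L}(P(n)+1)$. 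Both pieces are centred at $n$, and $S/b-\theta$ lives on $\{n-1,n,n+1\}$. Each piece contributes its own band interval, so by convexity the attainable $f_2$ form the interval from $\frac{\theta(n+1)}{L}\cdot\frac{(n+1)\bmod2}{n+1}+\frac{(1-\theta)n}{L}\cdot\frac{n\bmod2}{n}$ up to $1$.
\item The lower endpoint equals $\theta/L$ when $n$ is even and $(1-\theta)/L$ when $n$ is odd. In both cases this is $\operatorname{dist}(L,2\Z)/L$, since $L=n+\theta$ lies between $n$ and $n+1$.
\item Finally, the mean forces $f_1=f_3$, so $f_2$ determines $F$.
\end{itemize}

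For (iv), the point $1$ lies strictly between the atoms $a$ and $a+b$, so $1\notin a+b\Z$. \Cref{thm:B}(i) then forces $\theta=0$, that is, $k=1/b\in\N$, and the rest follows from \cref{thm:B}(ii) with $c=\frc(a/b)$.
\begin{itemize}
\item From $a/b=k-1+(a+b-1)/b$ and $0<(a+b-1)/b<1$, we get $c=C$ and $\lfloor a/b\rfloor=k-1$. Hence $S/b-C$ lives on $\{k-2,k-1,k\}$.
\item The $P(k)$ component, centred at $k-1$, is $(s,1-2s,s)$ with $0\le s\le\lfloor k/2\rfloor/k$.
\item The $P(k)+1$ component is centred at $k$ but may only use $\{k-1,k\}$. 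Its displacements then lie in $\{-1,0\}$ and have mean zero, so this component is $\delta_k$.
\item Hence $F=C(s,1-2s,s)+(1-C)\delta_k$. This gives $f_2=C(1-2s)\in[C(k\bmod2)/k,\,C]$, together with the stated $f_1=(C-f_2)/2$ and $f_3=1-(C+f_2)/2$.
\item For odd $k$ the lower end is $C/k=(a+b-1)/(bk)=a+b-1$.
\end{itemize}

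The main obstacle is bookkeeping rather than ideas. One must track carefully the centre of each polytope after the shifts by $c$ and by $1$, and justify that the band restriction may be applied componentwise inside the Minkowski combination. The nonnegativity argument above handles the latter, and convexity of $\cG(b,a)$ turns the endpoint computations into full intervals.
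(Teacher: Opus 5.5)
Your proof is correct and follows the paper's argument essentially step for step: membership reduces to $\cG(b,a)$ via \cref{thm:B}, the band description of $P(m)$ is applied fiber by fiber (legitimate because all components are nonnegative), the lower endpoint in (i) is the weighted sum $\theta/L$ or $(1-\theta)/L$, and in (iv) the $P(k)+1$ fiber collapses to $\delta_k$ because its support cannot lie above its mean. One small slip: for $k\ge3$ that fiber could a priori also charge $k-2$ (displacements in $\{-2,-1,0\}$, not only $\{-1,0\}$), but your mean-zero argument excludes this in exactly the same way, so the conclusion is unaffected.
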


\begin{proof}
(i) $1$ is on the grid. By \cref{thm:B}, the two fibers have sizes $n+1$ and $n$, with weights $\theta(n+1)/L$ and $(1-\theta)n/L$, and both are centered at the mean. On a width-three window each fiber of size $m$ contributes a symmetric law whose middle mass fills the interval $[(m\bmod2)/m,\,1]$ (the segment $0\le s\le\lfloor m/2\rfloor/m$ above), so $f_1=f_3$, the attainable middle masses form an interval with right end $1$, and its left end is $\theta(n+1)/L\cdot((n+1)\bmod2)/(n+1)+(1-\theta)n/L\cdot(n\bmod2)/n$, which equals $\operatorname{dist}(L,2\Z)/L$ in both parities of $n$ (it is $\theta/L$ for even $n$ and $(1-\theta)/L$ for odd $n$; when $\theta=0$ the first fiber has weight zero). (ii) is immediate. (iii) $Y=1-X$ gives $\delta_1$. (Here and below the support restriction is imposed fiber by fiber; forbidden masses cannot cancel because all masses are nonnegative.) (iv) Here $1$ lies strictly between two atoms, so it is off the grid and \cref{thm:B}(i) forces $k=1/b\in\N$. The fiber of weight $C$ is $P(k)$, centered on the middle atom; the fiber of weight $1-C$ is $P(k)+1$, whose support would have to lie below its own mean, so it is the point mass at $a+b$. The first fiber contributes a symmetric law with middle mass between $(k\bmod2)/k$ and $1$, which gives the stated interval after scaling by $C$ and the stated $f_1,f_3$. Finally $C(k\bmod2)/k=(a+b-1)$ when $k$ is odd, since $C/k=(a+b-1)$.
\end{proof}

Laws with $a>1$ reduce to $a<1$ by the reflection $S\mapsto2-S$, which preserves $\DU$. The quantity $\operatorname{dist}(L,2\Z)/L$ in (i) is the `non-integrity' of $1/(2b)$ used in \cite{MaoWangWang2019}. \Cref{cor:W3} agrees with their Theorem~4 (arXiv version: Theorem~3.5) for laws with three positive masses once three misprints are corrected, the lower bound $a+b-1$ for odd $k$ among them; see Appendix~\ref{app:mww}.

\subsection{Single steps behind the known obstructions}
In the ranges of \cref{prop:mwwsteps} the obstructions to the two- and three-point laws of \cite{MaoWangWang2019} are single-step inequalities \eqref{eq:onestep}.

\begin{prop}\label{prop:mwwsteps}
\begin{enumerate}[label=(\alph*)]
\item Let $F$ be a mean-one law on $\{a,a+b\}$ with $a<1<a+b$ and $\frac12<b<1$. Then $F$ violates \eqref{eq:onestep} for $\eta=\min(a/2,\,a+b-1)$; in particular $F\notin\DU$.
\item Let $F$ be the law with masses $(f,f_2,f)$ on $\{1-b,1,1+b\}$, where $\frac14\le b<1$, and put $L=1/b$. Take $\eta=b$ if $L\in(1,2]\cup[3,4]$ and $\eta=(1-b)/2$ if $L\in[2,3]$. Then \eqref{eq:onestep} for this $\eta$ reads $f_2\ge\operatorname{dist}(L,2\Z)/L$, so this single inequality decides whether $F\in\DU$.
\end{enumerate}
\end{prop}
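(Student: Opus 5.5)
For (a) I will use the parametrization of mean-one two-point laws from \cref{sec:tails}: $F=F_{w,b}$ puts mass $w$ at $a=1-(1-w)b$ and mass $1-w$ at $a+b=1+wb$, with $0<w<1$. The choice $\eta=\min(a/2,\,a+b-1)$ guarantees $2\eta\le a$ and $1+\eta\le a+b$, so both tails in \eqref{eq:onestep} are read off directly: $\mathbb{P}(S\ge2\eta)=1$ and $\mathbb{P}(S\ge1+\eta)=1-w$. The left side is therefore $2-w$. Since the right side is $2-2\eta$, a violation is equivalent to $\eta>w/2$, and I will check this for each branch of the minimum.
\begin{itemize}
\item If $\eta=a/2$, the condition reads $a>w$, that is, $1-(1-w)b>w$, that is, $(1-w)(1-b)>0$. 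This holds because $b<1$.
\item If $\eta=a+b-1=wb$, the condition reads $wb>w/2$. This is exactly the hypothesis $b>\frac12$.
\end{itemize}
It remains to confirm $0<\eta<1$, which follows from $a>0$, $w>0$ and $a<1$. Then \cref{cor:readings}(iii) gives $F\notin\DU$.

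For (b) the plan is to compute both sides of \eqref{eq:onestep} on the three atoms $1-b,1,1+b$, with $f=(1-f_2)/2$, in each of the three ranges of $L$. Once the single inequality is shown to be equivalent to $f_2\ge\operatorname{dist}(L,2\Z)/L$, \cref{cor:W3}(i) shows that this is exactly the membership criterion. So the single step is necessary by \cref{cor:readings}(iii), and it is also sufficient.
\begin{itemize}
\item \emph{$L\in(1,2]$, $\eta=b$.} Here $1\le2b<1+b$, so $\mathbb{P}(S\ge2b)=f$ when $b>\frac12$, and $\mathbb{P}(S\ge1+b)=f$. The inequality $2f\le2-2b$ becomes $f_2\ge2b-1=(2-L)/L$. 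The boundary case $b=\frac12$ has left side $1$ and is trivially satisfied, matching $\operatorname{dist}=0$.
\item \emph{$L\in[3,4]$, $\eta=b$.} Now $3b\le1$ gives $2b\le1-b$. So the left side is $1+f$, and the inequality becomes $f_2\ge4b-1=(4-L)/L$.
\item \emph{$L\in[2,3]$, $\eta=(1-b)/2$.} Here $2\eta=1-b$, and $1<1+\eta\le1+b$ because $3b\ge1$. So the left side is $1+f$ and the right side is $1+b$. The inequality becomes $f_2\ge1-2b=(L-2)/L$.
\end{itemize}
In each range the expression obtained equals $\operatorname{dist}(L,2\Z)/L$. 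I will also check $0<\eta<1$ in each range.

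The only delicate point is bookkeeping at the range endpoints, where a threshold $\xi$ coincides with an atom. Because the tails are closed ($\ge$), such an atom is counted, and I will verify that the resulting inequality still matches the formula. For example, at $L=2$ the two recipes give $\eta=\frac12$ and $\eta=\frac14$, and both yield $f_2\ge0$. Nothing beyond this case analysis and \cref{cor:W3}(i) should be needed.
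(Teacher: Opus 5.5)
Your proposal is correct and follows the paper's proof essentially step for step. In (a) the paper also reads off the left side $2-w$ with $w=(a+b-1)/b$ and checks $w<a$ (equivalent to $(1-a)(1-b)>0$) and $w/2<a+b-1$ (equivalent to $b>\frac12$); in (b) it runs the same three-range computation and closes with \cref{cor:W3}(i). The differences are cosmetic: you use the $F_{w,\beta}$ parametrization, you treat $b=\frac12$ inside the first range where the paper remarks separately that both choices give $f_2\ge0$ at $L=2$, and the positivity $a>0$ you invoke follows from $a>1-b>0$.
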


\begin{proof}
(a) The masses are $w=(a+b-1)/b$ at $a$ and $1-w=(1-a)/b$ at $a+b$. For $\eta\le a/2$ and $\eta\le a+b-1$ we have $2\eta\le a$ and $1+\eta\le a+b$, so the left side of \eqref{eq:onestep} is $2-w$, which exceeds $2-2\eta$ if and only if $\eta>w/2$. Now $w<a$ if and only if $a+b-1<ab$, that is, $(1-a)(1-b)>0$, which holds, and $w/2<a+b-1$ if and only if $b>\frac12$. So $\eta=\min(a/2,a+b-1)$ lies in $(w/2,1)$.
(b) For $L\in(1,2)$, $2\eta=2b\in(1,1+b]$, so both probabilities equal $f$ and the inequality is $f\le1-b$, that is, $f_2\ge2b-1=(2-L)/L$. For $L\in[2,3]$, $2\eta=1-b$ and $1<1+\eta\le1+b$, so the inequality is $1+f\le1+b$, that is, $f_2\ge1-2b=(L-2)/L$. For $L\in[3,4]$, $2b\le1-b$, so the inequality is $1+f\le2-2b$, that is, $f_2\ge4b-1=(4-L)/L$. (At $L=2$ both choices give the trivial bound $f_2\ge0$.) In each range the right side is $\operatorname{dist}(L,2\Z)/L$, and by \cref{cor:W3}(i) a symmetric law with this property belongs to $\DU$.
\end{proof}

In other words, the symmetric case of the three-point theorem of \cite{MaoWangWang2019} ($a=1$ in \cref{cor:W3}) for spacings $\frac14\le b<1$, and the necessity half of their two-point theorem for $\frac12<b<1$, are statements about single-step bounds. The asymmetric case (iv) of \cref{cor:W3} is not: the law $F_*$ of Proposition~S4.1 satisfies every single-step inequality and needs three unit steps, and smaller spacings need more (\cref{thm:twopoint,thm:threepoint}).
\section{Concluding remarks}\label{sec:conclusion}

For two uniforms the right replacement for convex order is a family of joint tail bounds, one for each finite set of thresholds. Convex order does not disappear from the picture, though. It still decides membership for densities at most one, and at density two it carries every single-step bound, every two-step bound, and every bound with nonincreasing gaps. What it cannot do is control laws with heavy atoms or fine spacing, and there the number of thresholds needed grows as the spacing shrinks, whatever the number of atoms. Three questions remain open, and they are not equally important.

The most important is \cref{prob:density}: do a density at most $2$ and $S\le_{\mathrm{cx}}2U$ together imply that $\law(S)$ is attainable? By \cref{prop:duality} it would be enough to settle densities with values in $\{0,2\}$ whose stop-loss gap is positive on $(0,2)$. Another route would be to write down, for every staircase, a convex certificate as in \cref{lem:densitydual}. Such certificates exist for single steps, for nonincreasing gaps, for two steps, and for every staircase with distinct thresholds on the grids of spacing $1/8$, $1/12$, and $1/16$. On this evidence the answer should be yes, but a construction that works uniformly is still missing.

The second question concerns atoms. \Cref{cor:smallatoms} needs every atom to be at most $1/k$, and the examples in Remark~S4.1 show that $1.01/k$ is already too much for some $k$. Does every bound $c/k$ with $c>1$ fail for some $k$? An answer would show exactly how far the lattice argument behind \cref{thm:density} can be pushed.

The third is about grids. A lattice law outside $\DU$ is excluded by some bound with repeated thresholds on its own grid, and by bounds with distinct thresholds only on a finer one (\cref{ex:repeated}). How much finer the grid has to be is open. On each fixed grid, the shortest complete list of bounds is the list of facets of $\Ph(k)$ other than the coordinate facets, and describing that list is the subject of the companion paper \cite{ChaudharyKhandelwalFacets}, which takes up the question of Li et al.\ \cite{LiLiuYao2026} on whether the rays behind these facets have a simple description.

\appendix
\section{Corrections to Mao et al.\ (2019)}\label{app:mww}

Numbering follows the journal version \cite{MaoWangWang2019}; arXiv version 2 numbers the same results Theorem~3.4, Theorem~3.5, and Lemma~3.7. Each item follows from \cref{cor:W3} or from direct arithmetic, and none affects their results for three or more summands. Items 1--3 are misprints in statements, item 4 concerns an interval endpoint and a construction in a lemma, and item 5 is a labeling slip inside a proof that does not affect the stated theorem.

\begin{enumerate}[leftmargin=*]
\item \emph{Theorem 4(iii).} The printed bound $f_2\ge a+b-\tfrac12$ should read $f_2\ge a+b-1$. The law $(1/18,1/18,8/9)$ on $\{7/18,13/18,19/18\}$ ($a=13/18$, $b=1/3$) belongs to $\DU$, by an exact construction, but violates the printed bound.
\item \emph{Summary after Theorem 4, parameter.} The printed $c=a/b+1-1/(2b)$ should read $c=(a+b-1)/b$. With the printed value the listed vertices leave the simplex (for $b=1/4$ and $a=0.8$ a coordinate equals $2.2$).
\item \emph{Summary, vertex of case (iii).} The printed $\tfrac12\bigl(c(1-b),\,2bc,\,2-c(1-b)\bigr)$ should read $\tfrac12\bigl(c(1-b),\,2bc,\,2-c(1+b)\bigr)$; the printed vector sums to $1+bc$. At $a=13/18$, $b=1/3$ the corrected vertex is exactly $(1/18,1/18,8/9)$.
\item \emph{Lemma 4(iii)(c).} The interval $p_1\in(b^*/T,b^*)$ should include its lower endpoint. It is attained by their construction once the last piece is corrected: on $\{X\in(2m,b+2m)\}$, where $T=2m+1$, the conditional law must be $Y=b-X-1$, not $Y=b-X$; with $Y=b-X$ the law of $Y$ is not uniform and that piece contributes nothing to $p_1$. This item was checked against the manuscript of Mao et al.\ dated June 8, 2019 (p.~14).
\item \emph{Proof of Theorems 3 and 4.} Under the labeling $P(Z_T=c-i)=f_{i+1}$ the mass $f_1$ sits at $a+b$, while the statement and the mean identity $f_3=f_1+(1-a)/b$ put $f_1$ at $a-b$. The middle mass, and hence the result, is unaffected.
\end{enumerate}
The bibliography entry of \cite{MaoWangWang2019} for Mao and Wang (2015) gives J.~Multivariate Anal.~136, 12--25; the paper is \cite{MaoWang2015}.

\section*{A note on the use of AI}
During the preparation of this work the authors used Claude Opus (Anthropic) and ChatGPT (OpenAI) with its Sol models in order to check proofs, write and verify the computation code, search the literature, and help with drafting the text. After using these tools, the authors reviewed and edited the content as needed and take full responsibility for the content of the published article.

\fund There are no funding bodies to thank relating to the creation of this article.

\competing There were no competing interests to declare which arose during the preparation or publication process of this article.

\data The code and data that check every computational statement of this paper are deposited at Zenodo, \href{https://doi.org/10.5281/zenodo.23231387}{doi:10.5281/zenodo.23231387}. Section~S5 of the supplementary material lists the checks result by result.

\ifjournal
\supp The supplementary material for this article can be found at http://doi.org/10.1017/[TO BE SET].
\else
\supp The supplementary material (Sections S1 to S5: the general proof of \cref{thm:A}, the approximation rate, windows and four-atom laws, deferred proofs, and the exact checks) is posted with this article as the ancillary file supplement.pdf.
\fi

\bibliographystyle{APT}
\bibliography{refs}

\end{document}